\documentclass[11pt,twoside, leqno]{article}

\usepackage{mathrsfs}
\usepackage{amssymb}
\usepackage{amsmath}
\usepackage{amsthm}
\usepackage{amsfonts}
\usepackage{color}

\pagestyle{myheadings}\markboth{\footnotesize\rm\sc Liu Xiong, Li Baode, Qiu Xiaoli and Li Bo}
{\footnotesize\rm\sc Musielak-Orlicz Hardy Spaces}

\allowdisplaybreaks

\textwidth=15cm
\textheight=21cm
\oddsidemargin 0.35cm
\evensidemargin 0.35cm

\parindent=13pt

\def\rr{{\mathbb R}}
\def\rn{{{\rr}^n}}
\def\zz{{\mathbb Z}}

\def\nn{{\mathbb N}}
\def\fz{\infty}

\def\aa{{\mathbb A }}

\def\cs{{\mathcal S}}

\def\cb{{\mathcal B}}

\def\az{\alpha}

\renewcommand\tilde{\widetilde}

\def\supp{{\rm{\,supp\,}}}
\def\esup{\mathop\mathrm{\,ess\,sup\,}}

\def\ls{\lesssim}
\def\lz{\lambda}
\def\Lz{\Lambda}

\def\bz{\beta}
\def\vz{\varphi}
\def\gz{{\gamma}}

\def\boz{\Omega}
\def\Oz{{\Omega}}

\def\hs{\hspace{0.3cm}}

\def\r{\right}
\def\lf{\left}

\def\bint{{\ifinner\rlap{\bf\kern.30em--}
\int\else\rlap{\bf\kern.35em--}\int\fi}\ignorespaces}

\def\sbint{{\ifinner\rlap{\bf\kern.32em--}
\hspace{0.078cm}\int\else\rlap{\bf\kern.45em--}\int\fi}\ignorespaces}

\def\dsup{\displaystyle\sup}

\newtheorem{theorem}{Theorem}[section]
\newtheorem{lemma}[theorem]{Lemma}
\newtheorem{corollary}[theorem]{Corollary}

\theoremstyle{definition}
\newtheorem{remark}[theorem]{Remark}
\newtheorem{definition}[theorem]{Definition}
\numberwithin{equation}{section}

\numberwithin{equation}{section}

%%%%%%%%%%%%%%%%%%%%%%%%%%%%%%%%%%%%%%%%%%%%%%%%%%%%%%%%%%%%%%%%%%%%%
%%%%%%%%%%%%%%%%%%%%%%%%%%%%%%%%%%%%%%%%%%%%%%%%%%%%%%%%%%%%%%%%%%%%%%

%\newtheorem{theorem}{Theorem}[chapter]
%\newtheorem{lemma}[theorem]{Lemma}

\numberwithin{equation}{section}

\begin{document}

\arraycolsep=1pt

\title{\Large\bf Estimates for Parametric Marcinkiewicz Integrals on Musielak-Orlicz Hardy Spaces
\footnotetext{\hspace{-0.35cm}
{\it 2010 Mathematics Subject Classification}.
{Primary 42B20; Secondary 42B30, 46E30.}
\endgraf{\it Key words and phrases.} Marcinkiewicz integral, Muckenhoupt weight,
Musielak-Orlicz function, Hardy space.
\endgraf This work is partially supported by the National
Natural Science Foundation of China (Grant Nos. 11461065 \& 11661075) and A Cultivate Project for Young Doctor from Xinjiang Uyghur Autonomous Region (No. qn2015bs003).
 \endgraf $^\ast$\,Corresponding author
}}
\author{Liu Xiong, Li Baode, Qiu Xiaoli and Li Bo\,$^\ast$  \\
%{{\small{\it School of Mathematics
%and System Sciences, Xinjiang University, Urumuqi, Xinjiang, 830046, P. R. China}}
%}
}
\date{  }
\maketitle

\vspace{-0.8cm}

\begin{minipage}{13cm}\small
{
\noindent
{\bf Abstract:}
Let $\varphi:\mathbb{R}^n\times[0,\,\infty) \rightarrow [0,\,\infty)$
satisfy that $\varphi(x,\,\cdot)$, for any given $x\in\mathbb{R}^n$,
is an Orlicz function and $\varphi(\cdot\,,t)$ is a Muckenhoupt $A_\infty$ weight
uniformly in $t\in(0,\,\infty)$. The Musielak-Orlicz Hardy space $H^\varphi(\mathbb{R}^n)$
generalizes both of the weighted Hardy space and the Orlicz Hardy space and hence has a wide generality.
In this paper, the authors first prove the completeness of both of the Musielak-Orlicz space
$L^\varphi(\mathbb{R}^n)$ and the weak Musielak-Orlicz space $WL^\varphi(\mathbb{R}^n)$. Then the authors obtain two boundedness criterions of
operators on Musielak-Orlicz spaces. As applications, the authors establish the boundedness of parametric Marcinkiewicz integral $\mu^\rho_\Omega$
from $H^\varphi(\mathbb{R}^n)$ to $L^\varphi(\mathbb{R}^n)$ (resp. $WL^\varphi(\mathbb{R}^n)$)
under weaker smoothness condition (resp. some Lipschitz condition) assumed on $\Omega$.
These results are also new even when $\varphi(x,\,t):=\phi(t)$ for all
$(x,\,t)\in\mathbb{R}^n\times[0,\,\infty)$, where $\phi$ is an Orlicz function.
}
%{\bf{Key words:}} Marcinkiewicz integral; Muckenhoupt weight; Musielak-Orlicz function; Hardy space \\
%{ \bf{MR(2010) Subject Classification:}} {42B20; 42B30; 46E30} / {\bf{CLC number:}} O174.2
\end{minipage}

%%%%%%%%%%%%%%%%%%%%%%%%%%%%%%%%%%%%%%%%%%%%%%%

%%%%%%%%%%%%%%% Section 1 %%%%%%%%%%%%%%%%%%%%%

%%%%%%%%%%%%%%%%%%%%%%%%%%%%%%%%%%%%%%%%%%%%%%%
\section{Introduction\label{s1}}
Suppose that $S^{n-1}$ is the unit sphere in the $n$-dimensional Euclidean space $\rn \ (n\ge2)$.
Let $\Omega$ be a {homogeneous function of degree zero} on $\rn$ which is locally integrable and satisfies the cancellation condition
\begin{align}\label{e1.1}
\int_{S^{n-1}}\Omega(x')\,d\sigma(x')=0,
\end{align}
where $d\sigma$ is the Lebesgue measure and $x':=x/{|x|}$ for any $x\neq{\mathbf{0}}$. For a function $f$ on $\rn$,
the parametric {Marcinkiewicz integral} $\mu^\rho_\Omega$ is defined by setting, for any $x\in\rn$ and $\rho\in(0,\,\infty)$,
$$\mu^\rho_\Omega(f)(x):=\lf(
\int_{0}^{\fz}\lf|F^\rho_{\Omega,\,t}(f)(x)\r|^2\,\frac{dt}{t^{2\rho+1}}\r)^{1/2},$$
where
$$F^\rho_{\Omega,\,t}(f)(x):=\int_{|x-y|\leq t}\frac{\Omega(x-y)}{|x-y|^{n-\rho}}f(y)\,{dy}.$$
When $\rho:=1$, we shall denote $\mu^1_\Omega$ simply by $\mu_\Omega$, which is reduced to the classic Marcinkiew-
icz integral. In 1938, Marcinkiewicz \cite{m38} first defined the operator $\mu_\Omega$ for $n=1$ and $\Omega(t):={\rm{sign}}\,t$.
The Marcinkiewicz integral of higher dimensions was studied by Stein \cite{s58} in 1958.
He showed that,
if $\Omega\in{\rm{Lip}}_\alpha(S^{n-1})$ with $\alpha\in(0,\,1]$, then $\mu_\Omega$ is
bounded on $L^p(\rn)$ with $p\in(1,\,2]$ and bounded from $L^1(\rn)$ to weak $L^1(\rn)$.
On the other hand, in 1960, H\"{o}rmander \cite{h60} proved that, if $\Omega\in{\rm{Lip}}_\alpha(S^{n-1})$
with $\alpha\in(0,\,1]$, then $\mu^\rho_\Omega$ is bounded on $L^p(\rn)$ provided that
$p\in(1,\,\fz)$ and $\rho\in (0,\,\infty)$.
%In 1999, Sakamoto and Yabuta \cite{jlswx4} proved that, if $\rho$ is a complex number and $\Omega\in{\rm{Lip}}_\alpha(S^{n-1})$ with $\alpha\in(0,\,1]$,
%then $\mu^\rho_\Omega$ is bounded on $L^p(\rn)$.
Notice that all the results mentioned above hold true depending on some smoothness condition of $\Omega$.
However, in 2009, Jiang et al. \cite{sj09} obtained  the following celebrated result that
$\mu^\rho_\Omega$ is bounded on $L^p_\omega(\rn)$ without any smoothness condition of $\Omega$,
where $\omega\in A_p$ and $A_p$ denotes the Muckenhoupt weight class.
\begin{flushleft}
{\bf{Theorem A.}}
{\it{
Let $\rho\in(0,\,\infty)$, $q\in(1,\,\fz)$, $q':=q/(q-1)$ and $\Omega\in L^q(S^{n-1})$ satisfying \eqref{e1.1}.
If $\omega^{q'}\in A_p$ with $p\in(1,\,\fz)$, then there exists a positive constant $C$ independent of $f$ such that
$$\lf\|\mu^\rho_\Omega(f)\r\|_{L^p_\omega(\rn)}\le C\|f\|_{L^p_\omega(\rn)}.$$
}}
\end{flushleft}

It is now well known that Hardy space $H^p(\rn)$ is a good substitute of the Lebesgue space
$L^p(\rn)$ with $p\in(0,\,1]$ in the study for the boundedness of operators and hence,
in 2007, Lin et al. \cite{ll07}  proved that the $\mu_\Omega$ is bounded
from weighted Hardy space to weighted Lebesgue space under weaker smoothness condition assumed on $\Omega$,
which is called $L^q$-Dini type condition of order $\alpha$ with $q\in[1,\,\fz]$ and $\az\in(0,\,1]$ (see Section \ref{s4} below for its definition).
In 2016, Wang \cite{w16} discussed the boundedness of $\mu^\rho_\Omega$
from weighted Hardy space to weighted Lebesgue space or to weighted weak Lebesgue space if
$\Omega\in{\rm{Lip}}_\alpha(S^{n-1})$ with $\az\in(0,\,1]$.
More conclusions of Marcinkiewicz integral are referred to \cite{ak14,gahk16,lll17}.

On the other hand, recently, Ky \cite{k14} studied a new Hardy space called Musielak-Orlicz Hardy space $H^\vz(\rn)$,
which generalizes both of the weighted Hardy space (cf. \cite{st89}) and
the Orlicz Hardy space (cf. \cite{j80,jy10}), and hence has a wide generality.
Apart from interesting theoretical considerations, the motivation to study
$H^\vz(\rn)$ comes from applications to elasticity,
fluid dynamics, image processing, nonlinear PDEs and the calculus of
variation (cf. \cite{d05,d09}).
More Musielak-Orlicz-type spaces are referred to
\cite{lhy12,hyy13,ly13,ccyy16,lffy16,lsl16,fhly17,ylk17}.

In light of Lin \cite{ll07}, Wang \cite{w16} and Ky \cite{k14}, it is a natural and interesting problem to ask whether parametric Marcinkiewicz integral
$\mu^\rho_\Omega$ is bounded from $H^\vz(\rn)$ to $L^\vz(\rn)$ (resp. $WL^\varphi(\rn)$)
under weaker smoothness condition (resp. some Lipschitz condition) assumed on $\Omega$.
In this paper we shall answer this problem affirmatively.

Precisely, this paper is organized as follows.

In Section \ref{s2}, we recall some notions concerning Muckenhoupt weights, growth functions,
Musielak-Orlicz space $L^\vz(\rn)$ and weak Musielak-Orlicz space $WL^\vz(\rn)$. Then we establish
the completeness of $L^\vz(\rn)$ and $WL^\vz(\rn)$ (see Theorems \ref{lwbx} and \ref{wlwbx} below).

Section \ref{s3} is devoted to establishing two boundedness criterions of operators
from $H^\vz(\rn)$ to $L^\vz(\rn)$ or from $H^\vz(\rn)$ to $WL^\vz(\rn)$ (see Theorems \ref{yt} and \ref{yt2} below).
In the process of the proofs of Theorem \ref{yt} and Theorem
\ref{yt2},
the Aoki-Rolewicz theorem (see Lemma \ref{ardl} below) and the weak type superposition principle
(see Lemma \ref{dj} below) play indispensable roles, respectively.

In Section \ref{s4}, we obtain
%recall notion concerning $L^q$-Dini type condition of order $\alpha$ with $q\in[1,\,\fz]$ and $\az\in(0,\,1]$.
the boundedness of $\mu^\rho_\Omega$
from $H^\vz(\rn)$ to $L^\vz(\rn)$ (resp. $WL^\varphi(\rn)$) under weaker smoothness condition (resp. some Lipschitz condition) assumed on $\Omega$
(see Theorem \ref{dingli.1},
Theorem \ref{dingli.2}, Corollary \ref{tuilun.1}, Theorem \ref{dingli.3}
and Theorem \ref{dingli.4} below).
%the proofs of which are given in Section \ref{s3}.
%Section \ref{s3} is devoted to establishing the boundedness of
%Marcinkiewicz integral $\mu_\Omega$ from $H^\vz(\rn)$ to $L^\vz(\rn)$.
In the process of the proof of Theorem \ref{dingli.1}, it is worth pointing out that, since the space variant $x$ and
the time variant $t$ appeared in $\vz(x,\,t)$ are inseparable,
we can not directly use the method of Lin \cite{ll07}.
This difficulty is overcame via establishing a more subtle pointwise estimate for $\mu^\rho_\Omega(b)$
 (see Lemma \ref{lemma.1} below for more details),
where $b$ is a multiple of an atom.

Finally, we make some conventions on notation.
Let $\zz_+:=\{1,\, 2,\,\ldots\}$ and $\nn:=\{0\}\cup\zz_+$.
For any $\bz:=(\bz_1,\ldots,\bz_n)\in\nn^n$,
let $|\bz|:=\bz_1+\cdots+\bz_n$.
Throughout this paper the letter $C$ will denote a \emph{positive constant} that may vary
from line to line but will remain independent of the main variables.
The \emph{symbol} $P\ls Q$ stands for the inequality $P\le CQ$. If $P\ls Q\ls P$, we then write $P\sim Q$.
For any sets $E,\,F \subset \rn$, we use $E^\complement$ to denote the set $\rn\setminus E$,
$|E|$ its  {\it{$n$-dimensional Lebesgue measure}},
$\chi_E$ its \emph{characteristic function} and
$E+F$ the {\it algebraic sum} $\{x+y:\ x\in E,\,y\in F\}$.
For any $s\in\rr$, $\lfloor s\rfloor$ denotes the
unique integer such that $s-1<\lfloor s\rfloor\le s$.
%\emph{maximal integer} not larger than $s$.
If there are no special instructions, any space $\mathcal{X}(\rn)$ is denoted simply by $\mathcal{X}$. For instance, $L^2(\rn)$ is simply denoted by $L^2$.
For any index $q\in[1,\,\fz]$, $q'$ denotes the {\it{conjugate index}} of $q$, namely, $1/q+1/{q'}=1$.
For any set $E$ of $\rn$, $t\in[0,\,\infty)$ and measurable function $\vz$,
let $\vz(E,\,t):=\int_E\vz(x,\,t)\,dx$ and $\{|f|>t\}:=\{x\in\rn: \ |f(x)|>t\}$.
As usual we use $B_r$ to denote the ball $\{x\in\rn:\ |x|<r\}$ with $r\in(0,\,\fz)$.

%%%%%%%%%%%%%%%%%%%%%%%%%%%%%%%%%%%%%%%%%%%%%%%

%%%%%%%%%%%%%%% Section 2 %%%%%%%%%%%%%%%%%%%%%

%%%%%%%%%%%%%%%%%%%%%%%%%%%%%%%%%%%%%%%%%%%%%%%

\section{Completeness of $L^\vz$ and $WL^\vz$}\label{s2}
In this section, we first recall some notions concerning Muckenhoupt weights, growth functions,
Musielak-Orlicz space $L^\vz$  and weak Musielak-Orlicz space $WL^\vz$, and then establish
the completeness of $L^\vz$ and $WL^\vz$.

Recall that a nonnegative function $\vz$ on $\rn\times[0,\,\fz)$ is called a {\it Musielak-Orlicz function} if,
for any $x\in\rn$, $\vz(x,\,\cdot)$ is an Orlicz function on $[0,\,\fz)$ and, for any $t\in[0,\,\fz)$,  $\vz(\cdot\,,t)$ is measurable on $\rn$.
Here a function $\phi: [0,\,\fz) \to [0,\,\fz)$ is called an {\it Orlicz function},
if it is nondecreasing, $\phi(0) = 0$, $\phi(t) > 0$
for any $t\in(0,\,\fz)$, and $\lim_{t\to\fz} \phi(t) = \fz$.

Given a Musielak-Orlicz function $\vz$ on $\rn\times[0,\,\fz)$,
$\vz$ is said to be of {\it{uniformly lower}} (resp. {\it{upper}}) {\it{type}} $p$ with $p\in(-\fz,\,\fz)$,
if there exists a positive constant $C:=C_{\vz}$ such that, for any $x\in\rn$, $t\in[0,\,\fz)$ and $s\in(0,\,1]$
(resp. $s\in[1,\,\fz)$),
\begin{eqnarray*}
\vz(x,\,st)\le C s^p\vz(x,\,t).
\end{eqnarray*}
The {\it critical uniformly lower type index}
and the {\it critical uniformly upper type index}
of $\vz$
are, respectively,
defined by
\begin{align}\label{e2.1}
i(\vz):=\sup\{ p\in(-\fz,\,\fz): \vz \mathrm{ \ is \ of \ uniformly\  lower\  type \ {\it p}} \},
\end{align}
%Observe that $i(\vz)$ may not be attainable, namely,
%$\vz$ may not be of uniformly lower type $i(\vz)$ (see \cite[p.\,415]{lhy12} for more details).
and
\begin{align}\label{e2.1.1}
I(\vz):=\inf\{p\in(-\fz,\,\fz):\vz \mathrm{ \ is \ of \ uniformly\  upper\  type \ {\it p}} \}.
\end{align}
Observe that $i(\vz)$ or $I(\vz)$ may not be attainable,
namely, $\vz$ may not be of uniformly lower type $i(\vz)$ or
of uniformly upper type $I(\vz)$ (see \cite[p.\,415]{lhy12} for more details).

\begin{definition}\label{d2.2}
\begin{enumerate}
\item[\rm{(i)}]Let $q\in[1,\,\fz)$. A locally integrable function $\vz(\cdot\,,t): \rn \to [0,\,\fz)$ is said to satisfy
the {\it uniform Muckenhoupt condition} $\aa_q$,
denoted by $\vz\in \aa_q$, if there exists a positive constant $C$ such that,
for any ball $B\subset\rn$ and $t\in(0,\,\fz)$, when $q=1$,
$$\frac{1}{|B|}\int_{B} \vz(x,\,t)\,dx\lf\{\esup_{x\in B} [\vz(x,\,t)]^{-1}\r\}\le C$$
and, when $q\in(1,\fz)$,
$$\frac{1}{|B|}\int_{B}\vz(x,\,t)\,dx
\lf\{\frac{1}{|B|}\int_{B}[\vz(x,\,t)]^{-\frac{1}{q-1}}\,dx\r\}^{q-1}
\le C.$$

\item[\rm{(ii)}]
Let $q\in(1,\,\fz]$. A locally integrable function $\vz(\cdot\,,t): \rn \to [0,\,\fz)$ is said to satisfy
the {\it{uniformly reverse H\"{o}lder condition}} $\mathbb{RH}_q$,
denoted by $\vz\in \mathbb{RH}_q$, if there exists a positive constant $C$ such that,
for any ball $B\subset\rn$ and $t\in(0,\,\fz)$, when $q\in(1,\fz)$,
$$\lf\{\frac{1}{|B|}\int_B\vz(x,\,t)\,dx\r\}^{-1}\lf\{\frac{1}{|B|}\int_B[\vz(x,\,t)]^q\,dx\r\}^{1/q}\leq C$$
and, when $q=\fz$,
$$\lf\{\frac{1}{|B|}\int_B\vz(x,\,t)\,dx\r\}^{-1}\esup_{x\in B} \vz(x,\,t)\,\leq C.$$
\end{enumerate}
\end{definition}

Define $\aa_\fz:=\bigcup_{q\in[1,\,\fz)} \aa_q$
%It is well-known that if $\vz\in \aa_q$ for $q\in[1,\,\fz]$,
%then $\vz^\varepsilon\in \aa_q$ for any $\varepsilon\in(0,\,1]$
%and $\vz^\eta\in \aa_q$ for some $\eta\in(1,\,\fz)$.
%Also, if $\vz\in \aa_q$ for $q\in(1,\,\fz)$,
%then $\vz\in \aa_r$ for any $r\in(q,\,\fz)$
%and $\vz\in \aa_d$ for some $d\in(1,\,q)$.
%Thus,
and, for any $\vz\in\aa_\fz$,
\begin{eqnarray}\label{e2.4}
q(\vz):=\inf\{q\in[1,\,\fz):\ \vz\in\aa_q\}.
\end{eqnarray}
%and
%\begin{eqnarray}\label{e2.4.1}
%r(\vz):=\sup\{q\in(1,\,\fz]:\ \vz\in \mathbb{RH}_q(\rn)\}.
%\end{eqnarray}
Observe that, if $q(\vz)\in(1,\,\fz)$, then $\vz\notin\aa_{q(\vz)}$, and there exists $\vz\notin\aa_1$ such that $q(\vz)=1$ (cf. \cite{jn87}).

\begin{definition}\label{d2.3}{\rm\cite[Definition 2.1]{k14}}
A function $\vz: \rn\times[0,\,\fz) \to [0,\,\fz)$ is  called a {\it{growth function}}
if the following conditions are satisfied:
\begin{enumerate}
\item[\rm{(i)}] $\vz$ is a {Musielak-Orlicz function};
%\item[\rm{(i)}] $\vz$ is a {\it Musielak-Orlicz function}, namely,
%
%\hs(a) the function $\vz(x,\,\cdot): [0,\,\fz) \to [0,\,\fz)$ is an Orlicz function for all $x \in \rn$,
%
%\hs(b) the function $\vz(\cdot,\,t)$ is a Lebesgue measurable function on $\rn$ for all $t\in [0,\,\fz)$;
\item[\rm{(ii)}] $\vz\in\aa_\fz$;

\item[\rm{(iii)}] $\vz$ is of uniformly lower type $p$ for some $p\in(0,\,1]$ and of
 uniformly upper type $1$.
\end{enumerate}
\end{definition}

Suppose that $\vz$ is a Musielak-Orlicz function. Recall that the
\emph{Musielak-Orlicz space} $L^{\vz}$ is defined to be the set of all measurable functions $f$ such that,
for some $\lambda\in(0,\,\fz)$,
$$\int_\rn \vz\lf(x,\, \frac{|f(x)|}{\lz}\r)\, dx<\fz$$ equipped with the Luxembourg-Nakano (quasi-)norm
$$\|f\|_{L^\vz}:=\inf\lf\{ \lz\in(0,\,\fz):\ \int_\rn \vz\lf(x,\, \frac{|f(x)|}{\lz}\r)\, dx\le 1\r\}.$$

Similarly, the \emph{weak Musielak-Orlicz space} $WL^{\vz}$ is defined to be the set of all measurable functions $f$ such that, for some $\lambda\in(0,\,\fz)$,
$$\sup_{t\in(0,\,\fz)} \vz\lf(\{|f|>t\},\, \frac{t}{\lz}\r)<\fz$$ equipped with the quasi-norm
$$ \|f\|_{WL^\vz}:=\inf\lf\{ \lz\in(0,\,\fz):\ \sup_{t\in(0,\,\fz)}\vz\lf(\{|f|>t\},\,\frac{t}{\lz}\r)\le 1\r\}. $$

\begin{remark}\label{r2.11}
Let $\omega$ be a classic Muckenhoupt weight and $\phi$ an Orlicz function.
\begin{enumerate}
\item[(i)] If $\vz(x,\,t):=\omega(x)t^{p}$ for all $(x,\,t)\in \rn\times[0,\,\infty)$ with $p\in(0,\,\fz)$,
then $L^\varphi$ (resp. $WL^\vz$) is reduced to weighted Lebesgue space $L_\omega^p$
(resp. weighted weak Lebesgue space $WL^p_\omega$), and particularly, when $\omega\equiv 1$, the corresponding unweighted spaces are also obtained.
\item[(ii)] If $\vz(x,\,t):=\omega(x)\phi(t)$ for all $(x,\,t)\in \rn\times[0,\,\infty)$, then $L^\varphi$
(resp. $WL^\vz$) is reduced to weighted Orlicz space $L_\omega^\phi$
(resp. weighted weak Orlicz space $WL^\phi_\omega$), and particularly, when $\omega\equiv 1$, the corresponding unweighted spaces are also obtained.
\end{enumerate}
\end{remark}
Throughout the paper, we always assume that $\varphi$ is a growth function.

In order to obtain the completeness of ${L}^\varphi$, we need the following several lemmas,
which are some properties of growth functions.
\begin{lemma}\label{qcytj}{\rm{\cite[Lemma 4.2]{k14}}}
Let $\varphi$ be a growth function as in Definition \ref{d2.3}. Then the following hold true:
\begin{enumerate}
\item[\rm{(i)}]
for any $f\in {L}^\varphi$ satisfying $f\not\equiv 0$,
$$\int_{\rn}\vz\lf(x,\,\frac{|f(x)|}{\|f\|_{L^\vz}}\r)\,dx=1;$$
\item[\rm{(ii)}]
$\mathop{\lim}\limits_{k\rightarrow\fz}\|f_k\|_{{L}^\varphi}=0$ if and only if
$\mathop{\lim}\limits_{k\rightarrow\fz}\int_{\rn}\vz\lf(x,\,|f_k(x)|\r)\,dx=0.$
\end{enumerate}
\end{lemma}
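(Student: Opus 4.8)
The plan is to read off both assertions from the relationship between the modular $\rho(g):=\int_{\rn}\vz(x,\,|g(x)|)\,dx$ and the quasi-norm $\|\cdot\|_{L^\vz}$, exploiting the uniformly lower and upper type properties built into the growth function $\vz$. Fix $f\in L^\vz$ with $f\not\equiv0$, write $\lz_0:=\|f\|_{L^\vz}$, which lies in $(0,\,\fz)$, and set $\Phi(\lz):=\int_{\rn}\vz(x,\,|f(x)|/\lz)\,dx$ for $\lz\in(0,\,\fz)$. Since $\vz(x,\,\cdot)$ is non-decreasing, $\Phi$ is non-increasing on $(0,\,\fz)$; and, by the definition of the quasi-norm as an infimum, $\lz_0=\inf\{\lz\in(0,\,\fz):\ \Phi(\lz)\le1\}$, so that $\Phi(\lz)\le1$ for every $\lz>\lz_0$.

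To prove (i), I would first check that $\Phi$ is finite and continuous on all of $(0,\,\fz)$. Fixing any $\lz_1\in(\lz_0,\,\fz)$ we have $\Phi(\lz_1)\le1$, and the uniformly upper type $1$, applied with dilation factor $\lz_1/\lz>1$, gives $\Phi(\lz)\le C(\lz_1/\lz)\Phi(\lz_1)\le C\lz_1/\lz<\fz$ for $\lz\in(0,\,\lz_1)$; combined with monotonicity this yields $\Phi<\fz$ on $(0,\,\fz)$. Continuity then follows from the dominated convergence theorem: for $\lz_j\to\lz^\ast$ one has $\lz_j>\lz^\ast/2$ eventually, the uniformly upper type $1$ dominates $\vz(x,\,|f(x)|/\lz_j)$ by the integrable function $2C\,\vz(x,\,|f(x)|/\lz^\ast)$, and the continuity of $\vz(x,\,\cdot)$ gives $\vz(x,\,|f(x)|/\lz_j)\to\vz(x,\,|f(x)|/\lz^\ast)$ pointwise. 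Granting this, letting $\lz\downarrow\lz_0$ in $\Phi(\lz)\le1$ produces $\Phi(\lz_0)\le1$; and if we had $\Phi(\lz_0)<1$, continuity would furnish some $\lz<\lz_0$ with $\Phi(\lz)<1$, contradicting $\lz_0=\inf\{\lz:\ \Phi(\lz)\le1\}$. Hence $\Phi(\lz_0)=1$, which is (i).

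Assertion (ii) is then a short consequence of (i) and the type conditions. If $\|f_k\|_{L^\vz}\to0$, then, discarding the trivial indices with $f_k\equiv0$ and restricting to $k$ so large that $\|f_k\|_{L^\vz}\in(0,\,1]$, the uniformly lower type $p$ of $\vz$ (with $p\in(0,\,1]$ as in Definition~\ref{d2.3}), applied with dilation factor $\|f_k\|_{L^\vz}$, together with (i) gives $\int_{\rn}\vz(x,\,|f_k(x)|)\,dx\le C\|f_k\|_{L^\vz}^{\,p}\int_{\rn}\vz(x,\,|f_k(x)|/\|f_k\|_{L^\vz})\,dx=C\|f_k\|_{L^\vz}^{\,p}\to0$. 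Conversely, if $\int_{\rn}\vz(x,\,|f_k(x)|)\,dx\to0$, fix $\lz\in(0,\,1)$; the uniformly upper type $1$, applied with dilation factor $1/\lz\ge1$, gives $\int_{\rn}\vz(x,\,|f_k(x)|/\lz)\,dx\le(C/\lz)\int_{\rn}\vz(x,\,|f_k(x)|)\,dx$, which is $\le1$ for all large $k$, whence $\|f_k\|_{L^\vz}\le\lz$ for all large $k$; since $\lz\in(0,\,1)$ is arbitrary, $\|f_k\|_{L^\vz}\to0$.

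The main obstacle is the continuity of $\Phi$ needed for (i): one must first secure the a priori finiteness of $\Phi$ on all of $(0,\,\fz)$ so that the dominating function coming from the uniformly upper type $1$ is integrable, and one must use the regularity of the growth function $\vz$ to pass the limit inside the integral. Once these points are settled, everything else reduces to routine bookkeeping with the uniform type estimates recalled in Section~\ref{s2}.
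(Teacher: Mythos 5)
The paper gives no argument for this lemma at all---it is quoted verbatim from Ky \cite[Lemma 4.2]{k14}---so there is no in-paper proof to compare against; judged on its own, your modular-versus-norm argument is the standard one, and its skeleton is sound: finiteness of $\Phi(\lz):=\int_{\rn}\vz(x,|f(x)|/\lz)\,dx$ on all of $(0,\fz)$ via the uniformly upper type $1$ property, the identification $\|f\|_{L^\vz}=\inf\{\lz\in(0,\fz):\ \Phi(\lz)\le1\}$ together with monotonicity and continuity of $\Phi$ for part (i), and the two uniform type estimates for part (ii) (the converse direction there is clean, and the forward direction is fine once (i) is available).

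One step deserves an explicit flag: your dominated convergence argument invokes ``the continuity of $\vz(x,\cdot)$'', which is not part of the paper's definition of an Orlicz function or of a growth function (Definition \ref{d2.3} only requires $\vz(x,\cdot)$ to be nondecreasing, to vanish at $0$, to be positive for $t>0$ and to tend to $\fz$). This is not cosmetic, because without some continuity the identity in (i) is actually false: take $\vz(x,t):=\phi(t)$ with $\phi(t)=t$ for $t\in[0,1)$ and $\phi(t)=2t$ for $t\in[1,\fz)$, which is a growth function (it satisfies $t\le\phi(t)\le 2t$, hence is of uniformly lower type $p$ for every $p\in(0,1]$ and of uniformly upper type $1$, and lies in $\aa_1$), and let $f:=\chi_E$ with $|E|=1$; then $\Phi(\lz)=\phi(1/\lz)\le1$ exactly when $\lz>1$, so $\|f\|_{L^\vz}=1$, while the modular at the norm equals $\phi(1)=2\neq1$. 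So what you have written is a correct proof under the additional (implicit) hypothesis that $\vz(x,\cdot)$ is continuous, which is precisely the convention under which the cited lemma is stated and used in this literature; you should say so explicitly rather than treat continuity as automatic. Note that the converse half of (ii), and with a one-line modification also the forward half (apply the lower type estimate with $(1+\varepsilon)\|f_k\|_{L^\vz}$ in place of $\|f_k\|_{L^\vz}$ and use the definition of the infimum instead of (i)), need no continuity at all. Finally, you assert $\|f\|_{L^\vz}\in(0,\fz)$ without justification: finiteness follows from the uniformly lower type $p$ estimate applied to any $\lz$ with $\Phi(\lz)<\fz$, and positivity from monotone convergence together with $\vz(x,t)\to\fz$ as $t\to\fz$ on a set of positive measure where $|f|$ is bounded below; a sentence to this effect should be added.
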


The following lemma comes from \rm{\cite[Lemma 4.1]{k14}},
and also can be found in \cite{ylk17}.

\begin{lemma}\label{ckj}
Let $\vz$ be a growth function as in Definition \ref{d2.3}.
Then there exists a positive constant $C$ such that,
for any $(x,\,t_j)\in\rn \times [0,\,\fz)$ with $j\in\zz_+$,
$$\vz\lf(x,\,\sum_{j=1}^\fz t_j\r)\le C\sum_{j=1}^\fz\vz\lf(x,\,t_j\r).$$
\end{lemma}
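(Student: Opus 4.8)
The plan is to deduce this quasi-subadditivity of $\vz(x,\cdot)$ directly from the uniformly upper type $1$ property in Definition \ref{d2.3}(iii), which is precisely the hypothesis encoding concavity-type behaviour of $\vz(x,\cdot)$: being of uniformly upper type $1$ is equivalent to the ratio $t\mapsto\vz(x,t)/t$ being almost nonincreasing, uniformly in $x\in\rn$, and this is exactly what allows one to pull an infinite sum outside $\vz$.

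Concretely, I would fix $x\in\rn$ and a sequence $\{t_j\}_{j\in\zz_+}\subset[0,\fz)$, set $S:=\sum_{j\in\zz_+}t_j$, and let $C\in(0,\fz)$ be a constant such that $\vz(y,s\tau)\le Cs\,\vz(y,\tau)$ for all $y\in\rn$, $\tau\in[0,\fz)$ and $s\in[1,\fz)$. One first disposes of the degenerate cases: if $S=0$ then every $t_j=0$ and both sides equal $\vz(x,0)=0$; if $S=\fz$, then applying the computation below to the partial sums $\sum_{j=1}^{N}t_j$ and letting $N\to\fz$, together with $\lim_{t\to\fz}\vz(x,t)=\fz$, forces $\sum_{j\in\zz_+}\vz(x,t_j)=\fz$, so the inequality again holds. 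Hence one may assume $0<S<\fz$. For each $j\in\zz_+$ with $t_j>0$ one has $S/t_j\ge1$, so applying the uniformly upper type $1$ inequality with dilation factor $S/t_j$ gives
$$\vz(x,S)=\vz\lf(x,\frac{S}{t_j}\,t_j\r)\le C\,\frac{S}{t_j}\,\vz(x,t_j),$$
that is, $\vz(x,t_j)\ge C^{-1}(t_j/S)\,\vz(x,S)$, an inequality which also holds trivially when $t_j=0$. Summing over $j\in\zz_+$ and using $\sum_{j\in\zz_+}t_j/S=1$, I obtain $\sum_{j\in\zz_+}\vz(x,t_j)\ge C^{-1}\vz(x,S)$, which is the assertion with this same $C$.

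I do not anticipate any real obstacle: the argument is a single application of the defining inequality followed by a termwise summation. The only points requiring slight care are that the constant is genuinely independent of $x$ --- which is guaranteed because \emph{uniformly} upper type means exactly this --- and the bookkeeping for $S\in\{0,\fz\}$ so that the statement is literally correct for every sequence. It is worth noting that the uniformly lower type hypothesis of Definition \ref{d2.3}(iii) plays no role here: the upper type $1$ bound carries the whole proof, and, unlike genuine subadditivity, the estimate would fail were $\vz(x,\cdot)$ merely of uniformly upper type $q$ for some $q>1$ (take, e.g., $\vz(x,t)=t^{2}$ and $t_j=1$ for $1\le j\le N$).
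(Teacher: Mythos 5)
Your proof is correct: the paper does not prove this lemma but simply cites \cite[Lemma 4.1]{k14}, and the argument there is essentially the one you give, deducing the quasi-subadditivity by applying the uniformly upper type $1$ inequality with dilation factor $S/t_j$ and summing. Your handling of the degenerate cases $S=0$ and $S=\fz$ and the remark on the uniformity of the constant in $x$ are fine, so no changes are needed.
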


\begin{theorem}\label{lwbx}
The space $L^\varphi$ is complete.
\end{theorem}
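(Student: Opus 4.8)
The plan is to show that every Cauchy sequence in $L^\vz$ converges in $L^\vz$, using the standard "extract a rapidly Cauchy subsequence, sum the telescoping series" technique adapted to the Musielak--Orlicz setting via Lemmas \ref{qcytj} and \ref{ckj}. First I would take a Cauchy sequence $\{f_k\}_{k\in\nn}\subset L^\vz$ and, using the Cauchy property, choose a subsequence $\{f_{k_j}\}_{j\in\zz_+}$ such that $\|f_{k_{j+1}}-f_{k_j}\|_{L^\vz}\le 2^{-j}$ for all $j\in\zz_+$. It suffices to prove that this subsequence converges in $L^\vz$ to some $f$, since a Cauchy sequence with a convergent subsequence converges (to the same limit). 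Set $g_j:=f_{k_{j+1}}-f_{k_j}$ and $G:=\sum_{j=1}^\fz|g_j|$ (a nonnegative, possibly infinite-valued measurable function).

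Next I would show $G\in L^\vz$ with a quantitative bound, which simultaneously gives that $G<\fz$ a.e.\ and hence that $f:=f_{k_1}+\sum_{j=1}^\fz g_j$ converges absolutely a.e.\ to a measurable function with $|f-f_{k_1}|\le G$. The key estimate: by Lemma \ref{ckj} there is a constant $C$ so that $\vz(x,\sum_{j}t_j)\le C\sum_j\vz(x,t_j)$; applying this with $t_j=|g_j(x)|/\|g_j\|_{L^\vz}\cdot\|g_j\|_{L^\vz}$ after pulling out a normalization, together with the uniform upper type $1$ property of $\vz$ to absorb the factors $\|g_j\|_{L^\vz}\le 2^{-j}$ (which are summable), I get $\int_\rn\vz(x,G(x))\,dx\lesssim \sum_{j=1}^\fz 2^{-j}\int_\rn\vz(x,|g_j(x)|/\|g_j\|_{L^\vz})\,dx=\sum_{j=1}^\fz 2^{-j}<\fz$, where the last equality uses Lemma \ref{qcytj}(i). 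This shows $G\in L^\vz$, so $G<\fz$ a.e.

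Finally I would prove $\|f-f_{k_j}\|_{L^\vz}\to 0$ as $j\to\fz$. Note $f-f_{k_j}=\sum_{i=j}^\fz g_i$ a.e., so $|f-f_{k_j}|\le R_j:=\sum_{i=j}^\fz|g_i|$. By the same argument as above (now with the tail sum), $\int_\rn\vz(x,|f(x)-f_{k_j}(x)|)\,dx\le\int_\rn\vz(x,R_j(x))\,dx\lesssim\sum_{i=j}^\fz 2^{-i}\to 0$ as $j\to\fz$; then Lemma \ref{qcytj}(ii) converts this to $\|f-f_{k_j}\|_{L^\vz}\to0$. Since $\{f_k\}$ is Cauchy and the subsequence converges, the whole sequence converges to $f$ in $L^\vz$.

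The main obstacle is the bookkeeping with the quasi-norm and the normalization constants: $L^\vz$ is only a quasi-normed space in general (Lemma \ref{ckj} replaces the triangle inequality), and the passage between the modular $\int_\rn\vz(x,|\cdot|)\,dx$ and the Luxembourg--Nakano quasi-norm must be handled carefully at each step. Specifically, one must be careful that the uniform upper type $1$ estimate is used in the correct direction (scaling down by $\|g_j\|_{L^\vz}\le 1$) so that the summable factors $2^{-j}$ genuinely appear, and that Lemma \ref{qcytj}(i) is only invoked for $g_j\not\equiv0$ (the case $g_j\equiv0$ being trivial). Apart from this, the argument is routine.
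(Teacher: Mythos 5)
Your overall strategy is sound and, apart from how the almost-everywhere limit is produced, it is essentially the paper's argument: the paper reduces completeness to convergence of the series $\sum_j f_j$ with $\|f_j\|_{L^\vz}\le 2^{-j}$, proves the modular bound \eqref{ee.1}, passes through convergence in measure and Riesz's theorem to obtain an a.e. convergent subsequence, and concludes with Lemma \ref{ckj} and Lemma \ref{qcytj}(ii); you instead extract a rapidly Cauchy subsequence, telescope, and obtain the a.e. limit directly from the a.e. finiteness of $G=\sum_j|g_j|$, which lets you bypass the convergence-in-measure/Riesz step. That variant is legitimate and slightly more economical, provided you note that $\int_\rn\vz(x,G(x))\,dx<\fz$ forces $G<\fz$ a.e. because $\vz(x,\cdot)$ is an Orlicz function with $\lim_{t\to\fz}\vz(x,t)=\fz$.

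There is, however, one step that fails as you have justified it. To bound $\int_\rn\vz(x,|g_j(x)|)\,dx$ you scale by $s:=\|g_j\|_{L^\vz}\le 2^{-j}\le 1$ and invoke the uniformly upper type $1$ property to ``absorb'' this factor. But the upper type $1$ inequality $\vz(x,st)\le Cs\,\vz(x,t)$ is only available for $s\in[1,\fz)$; for $s\in(0,1]$ it gives nothing beyond monotonicity, i.e.\ a factor $1$, and then your bound degenerates to $\sum_j\int_\rn\vz(x,|g_j(x)|/\|g_j\|_{L^\vz})\,dx=\sum_j 1=\fz$ by Lemma \ref{qcytj}(i). The correct tool is the uniformly lower type $p$ property from Definition \ref{d2.3} (with $p\in(0,1]$), which gives
$\int_\rn\vz(x,|g_j(x)|)\,dx\ls \|g_j\|_{L^\vz}^{p}\int_\rn\vz\lf(x,\frac{|g_j(x)|}{\|g_j\|_{L^\vz}}\r)dx\ls 2^{-jp}$,
exactly the paper's estimate \eqref{ee.1}. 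Since $p>0$, the series $\sum_j 2^{-jp}$ still converges, and the rest of your argument — the bound for $G$, the tail estimate for $R_j$, and the passage to the quasi-norm via Lemma \ref{qcytj}(ii) — goes through verbatim with $2^{-jp}$ in place of $2^{-j}$.
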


\begin{proof}
In order to prove the completeness of $L^\vz$, it suffices to prove that,
for any sequence $\{f_j\}_{j\in\zz_+}\subset L^\varphi$ satisfying $\|f_j\|_{L^\varphi}\leq 2^{-j}$,
the series $\{\sum_{j=1}^k f_j\}_{k\in\zz_+}$ converges in $L^\vz$.
By the uniformly lower type $p$ property of $\vz$ and Lemma \ref{qcytj}(i), we see that, for any $j\in\zz_+$,
\begin{align}\label{ee.1}
\int_{\rn}\vz(x,\,|f_j(x)|)\,dx\le\int_{\rn}\vz\lf(x,\,2^{-j}\frac{|f_j(x)|}{\|f_j\|_{L^\vz}}\r)\,dx\ls2^{-jp}.
\end{align}
Noticing that the series $\{\sum_{j=1}^k f_j\}_{k\in\zz_+}$ is a Cauchy sequence in $L^\varphi$, we have
$$\lim_{k,\,m\rightarrow\fz}\lf\|{\sum_{j=1}^k f_j}-{\sum_{j=1}^m f_j}\r\|_{L^\varphi}=0,$$
which, together with Lemma \ref{qcytj}(ii), implies that
\begin{align}\label{eea.1}
\mathop{\lim}\limits_{k,\,m\rightarrow\fz}\int_{\rn}
\vz\lf(x,\,\lf|{\sum_{j=1}^k f_j(x)}-{\sum_{j=1}^m f_j(x)}\r|\r)\,dx=0.
\end{align}
By the uniformly lower type $p$ and the uniformly upper type $1$ properties of $\varphi$, and \eqref{eea.1}, we know that, for any $\sigma\in(0,\,\infty)$,
\begin{align*}
&\mathop{\lim}\limits_{k,\,m\rightarrow\fz}\vz\lf(\lf\{\lf|{\sum_{j=1}^k f_j}-{\sum_{j=1}^m f_j}\r|>\sigma\r\},\,1\r)\\
&\hs=\mathop{\lim}\limits_{k,\,m\rightarrow\fz}\vz\lf(\lf\{\lf|{\sum_{j=1}^k f_j}-{\sum_{j=1}^m f_j}\r|>\sigma\r\},\,\frac{1}{\sigma}\,\sigma\r)\\
&\hs\lesssim \max\lf\{\sigma^{-1},\sigma^{-p}\r\} \mathop{\lim}\limits_{k,\,m\rightarrow\fz}\vz\lf(\lf\{\lf|{\sum_{j=1}^k f_j}-{\sum_{j=1}^m f_j}\r|>\sigma\r\},\,\sigma\r)\\
&\hs\lesssim \max\lf\{\sigma^{-1},\sigma^{-p}\r\}\mathop{\lim}\limits_{k,\,m\rightarrow\fz}\int_{\lf\{\lf|{\sum_{j=1}^k f_j}-{\sum_{j=1}^m f_j}\r|>\sigma\r\}}
\vz\lf(x,\,\lf|{\sum_{j=1}^k f_j(x)}-{\sum_{j=1}^m f_j(x)}\r|\r)\,dx\\
&\hs\lesssim \max\lf\{\sigma^{-1},\sigma^{-p}\r\} \mathop{\lim}\limits_{k,\,m\rightarrow\fz}\int_{\rn}
\vz\lf(x,\,\lf|{\sum_{j=1}^k f_j(x)}-{\sum_{j=1}^m f_j(x)}\r|\r)\,dx\thicksim0.
\end{align*}
Hence, there exists some $f$ such that $\sum_{j=1}^k f_j$ converges to $f$ as $k\rightarrow\infty$ in measure.
From this and using Riesz's theorem, we deduce that there exists a subsequence
$\sum_{j=1}^{k_i} f_{j}\rightarrow f$ as $i\rightarrow \infty$ almost everywhere.
By this, Lemma \ref{ckj} and \eqref{ee.1}, we obtain that
\begin{align*}
\int_{\rn}\vz\lf(x,\,\lf|f(x)-{\sum_{j=1}^{k_i} f_j(x)}\r|\r)\,dx
\lesssim\sum_{j\geq\,{k_i}+1}\int_{\rn}\vz\lf(x,\,\lf|f_j(x)\r|\r)\,dx
\lesssim\sum_{j\geq\,{k_i}+1}2^{-jp}\rightarrow 0
\end{align*}
as $i\rightarrow\fz$. From Lemma \ref{qcytj}(ii) again, it follows that
$\mathop{\lim}\limits_{i\rightarrow\fz}\|f-{\sum_{j=1}^{k_i} f_j}\|_{L^\varphi}=0.$
On the other hand, noticing that $\{\sum_{j=1}^k f_j\}_{k\in\zz_+}$ is a Cauchy sequence in $L^\vz$,
then it is easy to see that $\lim_{k\rightarrow\fz}\|\sum_{j=1}^k f_j-f\|_{L^\vz}=0$ and $f\in L^\vz$.
This finishes the proof of Theorem \ref{lwbx}.
\end{proof}

In order to obtain the completeness of ${WL}^\varphi$, we need the following several lemmas,
which are some properties of growth functions.

\begin{lemma}\label{jxsqj}
Let $\varphi$ be a growth function as in Definition \ref{d2.3}. Then the following hold true:
\begin{enumerate}
\item[\rm{(i)}]{\rm{\cite[Lemma 3.3(ii)]{lyj16}}}
for any $f\in {WL}^\varphi$ satisfying $f\not\equiv 0$,
$$\sup_{t\in(0,\,\infty)}\varphi\lf(\{|f|>t\},\,\frac{t}{\|f\|_{{WL}^\varphi}}\r)=1;$$
\item[\rm{(ii)}]
$\mathop{\lim}\limits_{k\rightarrow\fz}\|f_k\|_{{WL}^\varphi}=0$ if and only if
$\mathop{\lim}\limits_{k\rightarrow\fz}\mathop{\sup}\limits_{t\in(0,\,\infty)}
\varphi\lf(\{|f_k|>t\},\,t\r)=0.$
\end{enumerate}
\end{lemma}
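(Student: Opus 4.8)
\textbf{Proof proposal for Lemma \ref{jxsqj}.}

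The plan is to prove both statements by direct manipulation of the quasi-norm $\|\cdot\|_{WL^\vz}$, exploiting the uniformly lower type $p$ and uniformly upper type $1$ properties of $\vz$ exactly as in the analogous arguments for $L^\vz$ (Lemma \ref{qcytj}) carried out in \cite{k14}. For part (i), I would first note that, given $f\in WL^\vz$ with $f\not\equiv 0$, the definition of the quasi-norm together with the fact that $t\mapsto\vz(E,t)$ is nondecreasing and continuous in $t$ (which follows from the uniformly lower/upper type properties and dominated convergence) shows that the infimum defining $\|f\|_{WL^\vz}$ is attained, so that $\sup_{t\in(0,\fz)}\vz(\{|f|>t\},t/\|f\|_{WL^\vz})\le 1$. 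For the reverse inequality, suppose toward a contradiction that $\sup_{t\in(0,\fz)}\vz(\{|f|>t\},t/\|f\|_{WL^\vz})<1$; then for some $\lz<\|f\|_{WL^\vz}$ slightly smaller, one can use the uniformly lower type $p$ property to write, for each $t$, $\vz(\{|f|>t\},t/\lz)=\vz(\{|f|>t\},(\|f\|_{WL^\vz}/\lz)\,t/\|f\|_{WL^\vz})\le(\|f\|_{WL^\vz}/\lz)^{?}\cdots$ — more carefully, one shrinks rather than enlarges the argument, so the uniformly upper type $1$ estimate $\vz(x,st)\le Cs\,\vz(x,t)$ for $s\ge1$ applied with $s=\|f\|_{WL^\vz}/\lz$ gives that the supremum at scale $\lz$ is still finite and, by choosing $\lz$ close enough to $\|f\|_{WL^\vz}$, can be made $\le1$, contradicting the definition of the infimum. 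This pins the supremum to exactly $1$. Since part (i) of the relevant direction is already quoted from \cite[Lemma 3.3(ii)]{lyj16}, I would simply cite it and spend the effort on part (ii).

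For part (ii), I would argue both implications. For the ``only if'' direction, assume $\|f_k\|_{WL^\vz}\to0$. Fix $\ez\in(0,1)$; for $k$ large, $\|f_k\|_{WL^\vz}\le\ez$, and then for every $t\in(0,\fz)$, writing $t=\ez^{-1}\cdot\ez t$ and $t/\|f_k\|_{WL^\vz}\ge t/\ez$, one applies monotonicity in the second argument and the uniformly lower type $p$ property with dilation factor $\ez\in(0,1]$ to obtain
\begin{align*}
\vz(\{|f_k|>t\},t)=\vz\lf(\{|f_k|>t\},\ez\cdot\frac{t}{\ez}\r)
\le C\ez^{p}\,\vz\lf(\{|f_k|>t\},\frac{t}{\ez}\r)
\le C\ez^{p}\,\vz\lf(\{|f_k|>t\},\frac{t}{\|f_k\|_{WL^\vz}}\r)\le C\ez^{p},
\end{align*}
where the last step uses part (i). Taking the supremum over $t$ and then letting $\ez\to0$ gives $\sup_{t}\vz(\{|f_k|>t\},t)\to0$. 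For the ``if'' direction, assume $a_k:=\sup_{t}\vz(\{|f_k|>t\},t)\to0$ and, without loss of generality, $a_k\le1$. Given $\lz\in(0,1)$, I would show that for $k$ large enough $\sup_{t}\vz(\{|f_k|>t\},t/\lz)\le1$, which forces $\|f_k\|_{WL^\vz}\le\lz$. Indeed, $\vz(\{|f_k|>t\},t/\lz)=\vz(\{|f_k|>t\},\lz^{-1}t)\le C\lz^{-1}\vz(\{|f_k|>t\},t)\le C\lz^{-1}a_k$ by the uniformly upper type $1$ property (dilation factor $\lz^{-1}\ge1$); choosing $k$ so large that $C\lz^{-1}a_k\le1$ completes the argument, and since $\lz\in(0,1)$ was arbitrary, $\|f_k\|_{WL^\vz}\to0$.

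The main obstacle I anticipate is technical rather than conceptual: one must be careful that the supremum over $t\in(0,\fz)$ interacts correctly with the type estimates, since the constant $C$ from the uniformly lower/upper type properties is uniform in $x$ and $t$ but the level $t$ itself varies, and one must ensure no hidden dependence on $t$ creeps into the bounds — this is exactly why only a single global dilation factor ($\ez$ or $\lz^{-1}$) is pulled out, never one depending on $t$. A secondary point requiring care is the attainment of the infimum in the definition of $\|f\|_{WL^\vz}$, used in part (i): this rests on the continuity of $\lz\mapsto\sup_{t}\vz(\{|f|>t\},t/\lz)$, which I would justify via monotone/dominated convergence together with the uniform type bounds, paralleling the treatment of $L^\vz$ in \cite{k14}; if a cleaner route is available, one can instead cite \cite[Lemma 3.3(ii)]{lyj16} for part (i) verbatim and only prove part (ii) in detail.
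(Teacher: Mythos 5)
Your proof of part (ii) is correct and uses essentially the same ingredients as the paper's: the uniform lower type $p$ and upper type $1$ rescaling estimates together with part (i) (which, like the paper, you take from \cite[Lemma 3.3(ii)]{lyj16}) and the definition of the quasi-norm. The paper merely packages these into the single two-sided bound $\min\{\|f_k\|_{WL^\vz},\|f_k\|^p_{WL^\vz}\}\lesssim\sup_{t\in(0,\infty)}\vz(\{|f_k|>t\},t)\lesssim\max\{\|f_k\|_{WL^\vz},\|f_k\|^p_{WL^\vz}\}$ instead of running your two separate $\ez$- and $\lz$-implications, which is only a difference in presentation.
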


\begin{proof}
We only prove $\rm{(ii)}$ of Lemma \ref{jxsqj} since $\rm{(i)}$ of Lemma \ref{jxsqj} was proved in {\rm{\cite[Lemma 3.3(ii)]{lyj16}}}.
By the uniformly lower type $p$ and the uniformly upper type $1$ properties of $\varphi$, we conclude that, for any
$x\in\mathbb{R}^n$, $s\in(0,\,\infty)$ and $t\in(0,\,\infty)$,
\begin{align}\label{max}
\varphi\lf(x,\,st\r)\lesssim \max\lf\{s,s^p\r\}\varphi\lf(x,\,t\r)
\end{align}
and
\begin{align}\label{min}
\varphi\lf(x,\,st\r)\gtrsim \min\lf\{s,s^p\r\}\varphi\lf(x,\,t\r).
\end{align}
Thus, from \eqref{max} and Lemma \ref{jxsqj}$\rm{(i)}$, we deduce that
\begin{align*}
&\mathop{\sup}\limits_{t\in(0,\,\infty)}\varphi\lf(\{|f_k|>t\},\,t\r) \\
&\hs=\mathop{\sup}\limits_{t\in(0,\,\infty)}\varphi\lf(\{|f_k|>t\},\,{\|f_k\|_{{WL}^\varphi}}\frac{t}{\|f_k\|_{{WL}^\varphi}}\r) \\
&\hs\lesssim \max\lf\{{\|f_k\|_{{WL}^\varphi}},{\|f_k\|^p_{{WL}^\varphi}}\r\}
\mathop{\sup}\limits_{t\in(0,\,\infty)}\varphi\lf(\{|f_k|>t\},\,\frac{t}{\|f_k\|_{{WL}^\varphi}}\r)\\
&\hs\sim \max\lf\{{\|f_k\|_{{WL}^\varphi}},{\|f_k\|^p_{{WL}^\varphi}}\r\}.
\end{align*}
On the other hand, by \eqref{min} and Lemma \ref{jxsqj}$\rm{(i)}$, we obtain that
\begin{align*}
&\mathop{\sup}\limits_{t\in(0,\,\infty)}\varphi\lf(\{|f_k|>t\},\,t\r) \\
&\hs=\mathop{\sup}\limits_{t\in(0,\,\infty)}\varphi\lf(\{|f_k|>t\},\,{\|f_k\|_{{WL}^\varphi}}\frac{t}{\|f_k\|_{{WL}^\varphi}}\r) \\
&\hs\gtrsim \min\lf\{{\|f_k\|_{{WL}^\varphi}},{\|f_k\|^p_{{WL}^\varphi}}\r\}
\mathop{\sup}\limits_{t\in(0,\,\infty)}\varphi\lf(\{|f_k|>t\},\,\frac{t}{\|f_k\|_{{WL}^\varphi}}\r)\\
&\hs\sim\min\lf\{{\|f_k\|_{{WL}^\varphi}},{\|f_k\|^p_{{WL}^\varphi}}\r\}.
\end{align*}
From the above two inequalities, it follows that
$$\min\lf\{{\|f_k\|_{{WL}^\varphi}},{\|f_k\|^p_{{WL}^\varphi}}\r\}\lesssim
\mathop{\sup}\limits_{t\in(0,\,\infty)}\varphi\lf(\{|f_k|>t\},\,t\r)\lesssim
\max\lf\{{\|f_k\|_{{WL}^\varphi}},{\|f_k\|^p_{{WL}^\varphi}}\r\},$$
which implies that $\rm{(ii)}$ of Lemma \ref{jxsqj} holds true. This finishes the proof of Lemma \ref{jxsqj}.
\end{proof}

\begin{lemma}\label{jxxqj}{\rm{\cite[p.\,10]{g09c}}}
Let $\varphi$ be a growth function as in Definition \ref{d2.3}. If
$\mathop{\liminf}\limits_{k\rightarrow\fz}|f_k|=|f|$ almost everywhere, then, for any $t\in(0,\,\infty)$,
$$\varphi\lf(\{|f|>t\},\,t\r)\leq \mathop{\liminf}\limits_{k\rightarrow\fz} \varphi\lf(\{|f_k|>t\},\,t\r).$$
\end{lemma}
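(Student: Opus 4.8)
The statement to prove is Lemma \ref{jxxqj}: a Fatou-type property for the weak Musielak-Orlicz "modular" $\varphi(\{|f|>t\},t)$. Although the excerpt attributes it to \cite{g09c}, I would give a self-contained argument.

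\medskip

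\textbf{Proof plan.} The plan is to reduce the statement to the classical Fatou lemma for Lebesgue integrals applied to a carefully chosen sequence of nonnegative functions. Fix $t\in(0,\,\infty)$. The key observation is that
\[
\varphi\lf(\{|f|>t\},\,t\r)=\int_{\rn}\varphi(x,\,t)\,\chi_{\{|f|>t\}}(x)\,dx,
\]
so I need a lower-semicontinuity statement for the functions $g_k:=\varphi(\cdot\,,t)\chi_{\{|f_k|>t\}}$ and $g:=\varphi(\cdot\,,t)\chi_{\{|f|>t\}}$. The subtlety is that $\chi_{\{|f_k|>t\}}$ need not converge pointwise to $\chi_{\{|f|>t\}}$ even when $|f_k|\to|f|$ a.e., because of the strict inequality in the superlevel set (mass can "escape" to the boundary level $|f|=t$). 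To handle this I would first establish the pointwise inequality
\[
\chi_{\{|f|>t\}}(x)\le\liminf_{k\to\fz}\chi_{\{|f_k|>t\}}(x)\qquad\text{for a.e. }x\in\rn,
\]
using only $\liminf_{k\to\fz}|f_k(x)|=|f(x)|$ a.e.: indeed, if $|f(x)|>t$, then since the $\liminf$ of $|f_k(x)|$ equals $|f(x)|>t$, we have $|f_k(x)|>t$ for all sufficiently large $k$, whence $\liminf_k\chi_{\{|f_k|>t\}}(x)=1$; if $|f(x)|\le t$ the left-hand side is $0$ and the inequality is trivial.

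\medskip

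With this pointwise bound in hand, multiply both sides by the nonnegative function $\varphi(\cdot\,,t)$ to get $g(x)\le\liminf_{k\to\fz}g_k(x)$ for a.e.\ $x$. Now I would apply the classical Fatou lemma to the nonnegative measurable functions $g_k$:
\[
\int_{\rn}g(x)\,dx\le\int_{\rn}\liminf_{k\to\fz}g_k(x)\,dx\le\liminf_{k\to\fz}\int_{\rn}g_k(x)\,dx,
\]
which is exactly
\[
\varphi\lf(\{|f|>t\},\,t\r)\le\liminf_{k\to\fz}\varphi\lf(\{|f_k|>t\},\,t\r).
\]
The only point requiring a word of care is the measurability of the sets $\{|f_k|>t\}$ and $\{|f|>t\}$ and of $\varphi(\cdot\,,t)$; this is immediate since each $f_k$ and $f$ is measurable (as an a.e.\ limit of measurable functions) and $\varphi$ is a growth function, hence $\varphi(\cdot\,,t)$ is measurable for each fixed $t$ by Definition \ref{d2.3}(i).

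\medskip

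\textbf{Main obstacle.} The substantive step — and essentially the only non-routine one — is the pointwise set inclusion $\chi_{\{|f|>t\}}\le\liminf_k\chi_{\{|f_k|>t\}}$ a.e.; once that is secured, everything else is a direct invocation of Fatou's lemma. It is worth emphasizing why the reverse inclusion (and hence an equality, or an upper Fatou bound) can fail: a point with $|f(x)|=t$ but $|f_k(x)|>t$ for all $k$ contributes to $\liminf_k\chi_{\{|f_k|>t\}}$ but not to $\chi_{\{|f|>t\}}$, which is precisely why the lemma is stated as a one-sided inequality. No uniform type properties of $\varphi$ are needed here; the only structural facts used are that $\varphi$ is nonnegative and that $\varphi(\cdot\,,t)$ is measurable.
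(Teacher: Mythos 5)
Your proof is correct: the pointwise bound $\chi_{\{|f|>t\}}\le\liminf_{k\to\infty}\chi_{\{|f_k|>t\}}$ a.e.\ (valid because $\liminf_k|f_k(x)|=|f(x)|>t$ forces $|f_k(x)|>t$ for all large $k$), followed by Fatou's lemma for the nonnegative measurable functions $\varphi(\cdot\,,t)\chi_{\{|f_k|>t\}}$, yields exactly the stated inequality. The paper gives no proof of this lemma, citing only \cite[p.\,10]{g09c}; your argument is precisely the standard one underlying that reference (Fatou-type lower semicontinuity of distribution-type quantities under a.e.\ convergence), so there is nothing to add.
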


\begin{theorem}\label{wlwbx}
The space ${WL}^\varphi$ is complete.
\end{theorem}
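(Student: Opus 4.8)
The plan is to mirror the structure of the proof of Theorem \ref{lwbx}, replacing the convergence-in-$L^\vz$ machinery (Lemma \ref{qcytj}) by its weak analogue (Lemma \ref{jxsqj}) and the superadditivity Lemma \ref{ckj} by the subadditivity of the distribution set together with the lower-semicontinuity Lemma \ref{jxxqj}. As in Theorem \ref{lwbx}, it suffices to show that every series $\{\sum_{j=1}^k f_j\}_{k\in\zz_+}$ with $f_j\in WL^\vz$ and $\|f_j\|_{WL^\vz}\le 2^{-j}$ converges in $WL^\vz$. First I would record, via the uniformly lower type $p$ property of $\vz$ and Lemma \ref{jxsqj}(i), the basic size estimate
\begin{align}\label{wee.1}
\sup_{t\in(0,\,\fz)}\vz(\{|f_j|>t\},\,t)\ls 2^{-jp}\qquad(j\in\zz_+).
\end{align}

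Next I would establish that $\{\sum_{j=1}^k f_j\}_{k\in\zz_+}$ is Cauchy in measure. Being Cauchy in $WL^\vz$, we have $\lim_{k,\,m\to\fz}\|\sum_{j=1}^k f_j-\sum_{j=1}^m f_j\|_{WL^\vz}=0$, which by Lemma \ref{jxsqj}(ii) gives $\lim_{k,\,m\to\fz}\sup_{t\in(0,\,\fz)}\vz(\{|\sum_{j=1}^k f_j-\sum_{j=1}^m f_j|>\sz\},\,t)=0$ for each fixed $\sz$; applying \eqref{max} (to relate $\vz(\cdot,1)$ and $\vz(\cdot,\sz)$) and then the definition of a growth function (so that $\vz(E,1)\to0$ forces $|E|\to0$, using that $\vz(\cdot,1)$ is an $\aa_\fz$ weight and hence its measure controls Lebesgue measure from below) shows $|\{|\sum_{j=1}^k f_j-\sum_{j=1}^m f_j|>\sz\}|\to0$. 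Hence $\sum_{j=1}^k f_j$ converges in measure to some $f$, and by Riesz's theorem a subsequence $\sum_{j=1}^{k_i}f_j\to f$ almost everywhere as $i\to\fz$.

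Then I would estimate the tail. For any $t\in(0,\,\fz)$ and $i\in\zz_+$, the inclusion $\{|f-\sum_{j=1}^{k_i}f_j|>t\}\subset\bigcup_{j\ge k_i+1}\{|f_j|>t/2^{j-k_i}\}$ (a telescoping/dyadic splitting of the threshold so that $\sum_{j\ge k_i+1}t/2^{j-k_i}\le t$) together with subadditivity of $\vz(\cdot,t)$ in its first slot gives
\begin{align*}
\vz\lf(\lf\{\lf|f-\sum_{j=1}^{k_i}f_j\r|>t\r\},\,t\r)
\le\sum_{j\ge k_i+1}\vz\lf(\lf\{|f_j|>\frac{t}{2^{j-k_i}}\r\},\,t\r)
\ls\sum_{j\ge k_i+1}2^{j-k_i}\sup_{s\in(0,\,\fz)}\vz(\{|f_j|>s\},\,s)
\ls\sum_{j\ge k_i+1}2^{j-k_i}2^{-jp},
\end{align*}
where the middle step uses \eqref{max} with $s=2^{j-k_i}$ to pass from threshold $t/2^{j-k_i}$ and level $t$ to threshold $t/2^{j-k_i}$ and level $t/2^{j-k_i}$. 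Since $p\in(0,\,1]$ the series $\sum_{j\ge k_i+1}2^{j-k_i-jp}=2^{-k_i}\sum_{\ell\ge1}2^{\ell(1-p)}\cdot$(geometric-type tail) must be arranged to be summable; the cleanest route is instead to split the threshold as $t=\sum_{j\ge k_i+1}2^{-(j-k_i)}t$ but weight the $L^\vz$-style bound by $(2^{-(j-k_i)})^{p}$ via lower type, yielding a bound $\ls\sum_{j\ge k_i+1}2^{-(j-k_i)p}2^{-jp}\to0$ as $i\to\fz$, uniformly in $t$. Taking the supremum over $t$ and then invoking Lemma \ref{jxsqj}(ii) gives $\lim_{i\to\fz}\|f-\sum_{j=1}^{k_i}f_j\|_{WL^\vz}=0$; finally, since the full sequence is Cauchy in $WL^\vz$ and has a subsequence converging to $f$, the whole sequence converges to $f\in WL^\vz$, completing the proof.

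The main obstacle is the tail estimate: unlike in the $L^\vz$ case, where integrating over $\rn$ makes Lemma \ref{ckj} apply cleanly, here one only has subadditivity of $\vz(E,t)$ in $E$ for a \emph{fixed} level $t$, so the dyadic decomposition of the threshold must be chosen so that the extra powers of $2$ produced by the change of level (through \eqref{max}) are absorbed by the lower-type exponent $p$ coming from \eqref{wee.1}; getting this bookkeeping right, uniformly in $t$, is the delicate point. A secondary technical point is justifying that $\vz(E,1)\to0$ implies $|E|\to0$, which follows because $\vz(\cdot,1)\in\aa_\fz$ implies a lower bound of the form $|E|\ls(\vz(E,1))^{\tz}$ on sets of bounded measure for some $\tz>0$; alternatively one can bypass convergence in measure and work directly with Lemma \ref{jxxqj} applied to the a.e.-convergent subsequence, which is the route I would actually take to keep the argument self-contained.
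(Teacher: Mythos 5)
Your skeleton (reduce to a series with $\|f_j\|_{WL^\vz}\le 2^{-j}$, get convergence in measure and an a.e.-convergent subsequence, then estimate the tail) parallels the paper's argument, but the step that actually carries the proof --- the tail estimate --- does not hold up as written. With the splitting $t=\sum_{j\ge k_i+1}2^{-(j-k_i)}t$ and set-subadditivity you are left with bounding $\vz(\{|f_j|>c_jt\},t)$, $c_j:=2^{-(j-k_i)}$, and the available tools give only two options: upper type $1$ (a factor $c_j^{-1}=2^{j-k_i}$, producing the divergent $\sum_j 2^{j-k_i}2^{-jp}$, as you note), or Lemma \ref{jxsqj}(i) combined with lower type $p$, which gives $\vz(\{|f_j|>c_jt\},t)\ls(\|f_j\|_{WL^\vz}/c_j)^p\le 2^{-k_ip}$, a bound constant in $j$ whose sum over $j>k_i$ also diverges. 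Your proposed fix, the bound $\ls\sum_{j\ge k_i+1}2^{-(j-k_i)p}2^{-jp}$, cannot come from lower type: passing from level $c_jt$ to the larger level $t$ can only increase $\vz$ (it is nondecreasing in the second variable), so the level change produces the factor $(\|f_j\|_{WL^\vz}/c_j)^p=2^{(j-k_i)p}2^{-jp}$, i.e.\ the exponent has the opposite sign from what you wrote. The estimate is salvageable if you split the threshold more generously, e.g.\ $c_j\thicksim 2^{-(j-k_i)/2}$, which yields $\vz(\{|f_j|>c_jt\},t)\ls(\|f_j\|_{WL^\vz}/c_j)^p\ls 2^{-p(j+k_i)/2}$, summable and tending to $0$ uniformly in $t$; but as stated the key inequality of your main route is incorrect, so there is a genuine gap.

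The paper avoids this bookkeeping altogether, and your closing sentence points at its route without carrying it out: for a Cauchy sequence with an a.e.-convergent subsequence $f_{k_s}\to f$, apply the Fatou-type Lemma \ref{jxxqj} to $|f_{k_j}-f_{k_s}|\to|f_{k_j}-f|$ a.e.\ (as $s\to\fz$) to get, uniformly in $t$, $\vz(\{|f_{k_j}-f|>t\},t)\le\liminf_{s\to\fz}\vz(\{|f_{k_j}-f_{k_s}|>t\},t)<\varepsilon$ for large $j$, directly from the Cauchy property via Lemma \ref{jxsqj}(ii); no threshold splitting or superposition of the $2^{-jp}$ bounds is needed, and no reduction to a normally convergent series is required either. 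Note also that Lemma \ref{jxxqj} does not let you ``bypass convergence in measure'': you still need the a.e.-convergent subsequence, which both you and the paper obtain precisely from the in-measure convergence argument (via the $A_\fz$ lower bound and Riesz's theorem), so that remark misidentifies what the lemma replaces --- it replaces your dyadic tail estimate, not the construction of the limit $f$.
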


\begin{proof}
To prove that ${WL}^\varphi$ is complete, take $\{f_k\}_{k\in\zz_+}\subset{{WL}^\varphi}$ such that
$\mathop{\lim}\limits_{k,\,m\rightarrow\fz}\|f_k-f_m\|_{{WL}^\varphi}=0$.
By Lemma \ref{jxsqj}(ii), we know that, for any chosen positive number $\varepsilon$, however small,
there exists a positive integer $K$ such that, whenever $k,\,m\in[K,\,\fz)\cap\zz_+$, then
\begin{align}\label{eea.2}
\sup_{t\in(0,\,\fz)}\vz(\{|f_k-f_m|>t\},\,t)<\varepsilon.
\end{align}
By the uniformly lower type $p$ and the uniformly upper type $1$ properties of $\varphi$, and \eqref{eea.2}, we know that, for any $\sigma\in(0,\,\infty)$,
\begin{align*}
\mathop{\lim}\limits_{k,\,m\rightarrow\fz}\vz\lf(\lf\{|f_k-f_m|>\sigma\r\},\,1\r)
&= \mathop{\lim}\limits_{k,\,m\rightarrow\fz}\vz\lf(\lf\{|f_k-f_m|>\sigma\r\},\,\frac{1}{\sigma}\,\sigma\r)\\
&\lesssim \max\lf\{\sigma^{-1},\sigma^{-p}\r\} \mathop{\lim}\limits_{k,\,m\rightarrow\fz}\vz\lf(\lf\{|f_k-f_m|>\sigma\r\},\,\sigma\r)\thicksim0.
\end{align*}
Hence, there exists some $f$ such that $f_k\rightarrow f$ as $k\rightarrow\infty$ in measure,
which, together with Riesz's theorem, implies that some subsequence
\begin{align}\label{ee.2}
f_{k_s}\rightarrow f \ {\rm{as}} \ s\rightarrow\infty \ {\rm{almost \ everywhere}}.
\end{align}
For the $K$ mentioned above, take $J\in\zz_+$ such that, for any $j\in[J,\,\fz)\cap\zz_+$, the positive integer $k_j\ge K$.
By \eqref{ee.2} and Lemma \ref{jxxqj}, we know that, there exists a positive integer $J$ such that, whenever $j\in[J,\,\fz)\cap\zz_+$, then
\begin{align*}
\sup_{t\in(0,\,\fz)}\vz(\{|f_{k_j}-f|>t\},\,t)
&=\sup_{t\in(0,\,\fz)}\vz\lf(\lf\{\lim_{s\rightarrow\fz}|f_{k_j}-f_{k_s}|>t\r\},\,t\r) \\
&\le\lim_{s\rightarrow\fz}\sup_{t\in(0,\,\fz)}\vz\lf(\lf\{|f_{k_j}-f_{k_s}|>t\r\},\,t\r)<\varepsilon,
\end{align*}
that is to say,
\begin{align*}
\lim_{j\rightarrow\fz}\sup_{t\in(0,\,\fz)}\vz(\{|f_{k_j}-f|>t\},\,t)=0.
\end{align*}
Applying Lemma \ref{jxsqj}(ii) again, we conclude that
$$\lim_{j\rightarrow\fz}\|f_{k_j}-f\|_{WL^\vz}=0.$$
On the other hand, noticing that $\{f_k\}_{k\in\zz_+}$ is a Cauchy sequence in $WL^\vz$,
then it is easy to see that $\lim_{k\rightarrow\fz}\|f_k-f\|_{WL^\vz}=0$ and $f\in WL^\vz$.
This finishes the proof of Theorem \ref{wlwbx}.
\end{proof}

\section{Two boundedness criterions of operators}\label{s3}
In this section, we first recall the notion concerning the Musielak-Orlicz Hardy space $H^\vz$
via the non-tangential grand maximal function, and then establish
two boundedness criterions of operators from $H^\vz$ to $L^\vz$ or from $H^\vz$ to $WL^\vz$.

In what follows, we denote by $\cs$ the {\it set of all Schwartz functions}
and by $\cs'$ its {\it dual space } (namely, the {\it set of all tempered
distributions}). For any $m\in\nn$, let $\cs_m$ be the {\it{set}} of all $\psi\in\cs$ such that $\|\psi\|_{\cs_m}\le1$, where
\begin{eqnarray*}
\|\psi\|_{\cs_m}:=\sup_{\az\in\nn^n,\,|\az|\le m+1}\sup_{x\in\rn}(1+|x|)^{(m+2)(n+1)}|\partial^\az\psi(x)|.
\end{eqnarray*}
Then, for any $m\in\nn$ and $f\in \cs'$, the {\it{non-tangential grand maximal function}}
$f^\ast_m$ of $f$ is defined by setting, for all $x\in\rn$,
\begin{eqnarray}\label{e2.m1}
f^\ast_m(x) := \sup_{\psi\in \cs_m}\,\sup_{|y-x|<t,\,t\in(0,\,\fz)}
|f\ast\psi_t(y)|,
\end{eqnarray}
where, for any $t\in(0,\,\fz)$, $\psi_t(\cdot):= t^{-n}\psi(\frac{\cdot}{t})$.
When
\begin{align}\label{e2.5}
%m=m(\vz) :=\lf\lfloor n[q(\vz)/i(\vz)-1]\r\rfloor,
m=m(\vz) :=\lf\lfloor n\lf(\frac{q(\vz)}{i(\vz)}-1\r)\r\rfloor,
\end{align}
we denote $f^\ast_m$ simply by $f^\ast$,
where $q(\vz)$ and $i(\vz)$ are as in \eqref{e2.4} and \eqref{e2.1}, respectively.

\begin{definition}\label{d2.5} \cite[Definition 2.2]{k14}
Let $\vz$ be a growth function as in Definition \ref{d2.3}.
The \emph{Musielak-Orlicz Hardy space} $H^\vz$ is defined as the set of all $f\in\cs'$
such that $f^\ast\in L^\vz$ endowed with the (quasi-)norm
$$\|f\|_{H^\vz}:=\|f^\ast\|_{L^\vz}.$$
%When $m:=m(\vz)$, $H^\vz_{m}$ is denoted simply by $H^\vz$.
\end{definition}

\begin{remark}\label{r2.111}
Let $\omega$ be a classic Muckenhoupt weight and $\phi$ an Orlicz function.
\begin{enumerate}
\item[(i)] If $\vz(x,\,t):=\omega(x)t^{p}$ for all $(x,\,t)\in \rn\times[0,\,\infty)$ with $p\in(0,\,1]$,
then $H^\varphi$ is reduced to weighted Hardy space $H_\omega^p$,
and particularly, when $\omega\equiv 1$, the corresponding unweighted space is also obtained.
\item[(ii)] If $\vz(x,\,t):=\omega(x)\phi(t)$ for all $(x,\,t)\in \rn\times[0,\,\infty)$, then $H^\varphi$ is reduced to weighted Orlicz Hardy space $H_\omega^\phi$, and particularly, when $\omega\equiv 1$, the corresponding unweighted space is also obtained.
\end{enumerate}
\end{remark}

\begin{definition}\label{d2.11}{\rm\cite[Definition 2.4]{k14}}
Let $\vz$ be a growth function as in Definition \ref{d2.3}.
\begin{enumerate}
\item[\rm{(i)}] A triplet $(\vz,\,q,\,s)$ is said to be {\it {admissible}},
if $q\in (q(\vz),\,\fz]$ and $s \in [m(\vz),\,\fz)\cap\nn$,
where $q(\vz)$ and $m(\vz)$ are as in \eqref{e2.4} and \eqref{e2.5}, respectively.

\item[\rm{(ii)}] For an admissible triplet $(\vz,\,q,\,s)$, a measurable function $a$ is called a
{\it $(\vz,\,q,\,s)$-atom} if there exists some ball $B\subset\rn$ such that the following conditions are satisfied:

\quad(a) $a$ is supported in $B$;

\quad(b) $\|a\|_{L^q_\vz(B)}\leq\|\chi_B\|^{-1}_{L^\vz}$, where
\begin{eqnarray*}
\|a\|_{L_\vz^q(B)}:=
\lf\{\begin{array}{ll}
\dsup_{t\in(0,\,\fz)}\lf[\frac{1}{\vz(B,\,t)}\int_B|a(x)|^q \vz(x,\,t)\,dx\r]^{1/q}
                          ,\,\,\,&q\in[1,\,\fz),\\
\,\\
\|a\|_{L^\fz(B)},&q=\fz;
\end{array}\r.
\end{eqnarray*}

\quad(c) $\int_\rn a(x)x^\az dx=0$ for any $\az\in\nn^n$ with $|\az|\leq s$.

\item[\rm{(iii)}] For an admissible triplet $(\vz,\,q,\,s)$,
the {\emph{Musielak-Orlicz atomic Hardy space}} $H^{\vz,\,q,\,s}_{\rm{at}}$ is defined as the set of all
$f \in \cs'$ which can be represented as a linear combination of $(\vz,\, q,\, s)$-atoms, that is,
$f =\sum_j b_j$ in $\cs'$, where $b_j$ for each $j$ is a multiple of some $(\vz,\, q,\, s)$-atom
supported in some ball ${x_j+B_{r_j}}$, with the property
$$\sum_{j}\vz\lf({x_j+B_{r_j}},\, \|b_j\|_{L^q_\vz({x_j+B_{r_j}})}\r)<\fz. $$
For any given sequence of multiples of $(\vz,\,q,\,s)$-atoms, $\{b_j\}_j$, let
$$ \Lz_q(\{b_j\}_j):=\inf\lf\{\lz\in(0,\,\fz):\ \sum_j \vz\lf({x_j+B_{r_j}},\,\frac{\|b_j\|_{L^q_\vz({x_j+B_{r_j}})}}{\lz}\r)\le 1 \r\}  $$
and then the (quasi-)norm of $f\in\cs'$ is defined by
$$\|f\|_{H^{\vz,\,q,\,s}_{\rm{at}}}:=\inf\lf\{\Lz_q\lf(\{b_j\}_j\r)\r\},  $$
where the infimum is taken over all admissible decompositions of $f$ as above.
\end{enumerate}
\end{definition}

We refer the readers to \cite{k14} and \cite{ylk17} for more details on the real-variable theory
of Musielak-Orlicz Hardy spaces.

\begin{definition}\label{d2.zh}
Let $X$ and $Y$ be two linear spaces. An operator $T$: $D\subset X\rightarrow Y$
is called a positive sublinear operator if, for any $x\in\mathbb{R}^n$, the following conditions are satisfied:
\begin{enumerate}
\item[\rm{(i)}] $T(f)(x)\geq0$;
\end{enumerate}
\begin{enumerate}
\item[\rm{(ii)}] $T(\alpha f)(x)\leq|\alpha|T(f)(x)$, where $\alpha\in\mathbb{R}$;
\end{enumerate}
\begin{enumerate}
\item[\rm{(iii)}] $T(f+g)(x)\leq T(f)(x)+T(g)(x)$.
\end{enumerate}
\end{definition}

\begin{lemma}\label{szxz}
Let $X$ and $Y$ be two linear spaces and $T: D\subset X\rightarrow Y$
be a positive sublinear operator as in Definition \ref{d2.zh}. Then, for any $f,g\in D$,
$$\lf|T(f)-T(g)\r|\leq T(f-g).$$
\end{lemma}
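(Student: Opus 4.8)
The plan is to obtain the two-sided pointwise bound $-T(f-g)(x)\le T(f)(x)-T(g)(x)\le T(f-g)(x)$ directly from the three defining properties of a positive sublinear operator, with no analysis involved. First I would fix $x\in\rn$ and write $f=(f-g)+g$; applying the subadditivity property (iii) of Definition \ref{d2.zh} gives $T(f)(x)\le T(f-g)(x)+T(g)(x)$, hence $T(f)(x)-T(g)(x)\le T(f-g)(x)$. Next, exchanging the roles of $f$ and $g$ and arguing the same way with $g=(g-f)+f$ yields $T(g)(x)-T(f)(x)\le T(g-f)(x)$. To rewrite this last right-hand side in terms of $f-g$, I would invoke the homogeneity property (ii) with $\alpha=-1$, namely $T(g-f)(x)=T(-(f-g))(x)\le|-1|\,T(f-g)(x)=T(f-g)(x)$. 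Combining the two estimates gives $|T(f)(x)-T(g)(x)|\le T(f-g)(x)$ for every $x\in\rn$, which is the assertion.

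The argument is purely formal, so there is no genuine obstacle; the only point worth a word of care is that all the functions to which $T$ is applied must lie in the domain $D$ on which $T$ is defined. Here this is automatic: $(f-g)+g=f\in D$ and $(g-f)+f=g\in D$ trivially, so the only requirement is $f-g,\,g-f\in D$, which holds in every situation where Lemma \ref{szxz} is used (with $T$ a Marcinkiewicz-type operator and $D$ a linear space of functions such as the span of atoms or a local Lebesgue space); I would simply record this in passing. Note also that the nonnegativity property (i) is not needed for the inequality itself, although it is what makes $T(f-g)$ a meaningful nonnegative majorant of $|T(f)-T(g)|$.
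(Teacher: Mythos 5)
Your proposal is correct and follows essentially the same route as the paper: subadditivity applied to $f=(f-g)+g$ and $g=(g-f)+f$, combined with the homogeneity property with $\alpha=-1$ to pass from $T(g-f)$ to $T(f-g)$ (the paper records the slightly stronger fact $T(-h)=T(h)$, but only the inequality you use is needed). No gaps.
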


\begin{proof}
By Definition \ref{d2.zh}(ii), we obtain that
$$T(-f)\leq|-1|\,T(f)=T(f)\leq|-1|\,T\lf(-f\r)=T\lf(-f\r),$$
therefore, $T(-f)=T(f)$. Moreover, by Definition \ref{d2.zh}(iii), we know that
\begin{align*}
T(f)-T(g)=T(f-g+g)-T(g)
\leq T(f-g)+T(g)-T(g)=T(f-g)
\end{align*}
and
\begin{align*}
T(g)-T(f)&=T(g-f+f)-T(f)
\leq T(g-f)+T(f)-T(f)=T(g-f).
\end{align*}
From the above two inequalities and $T(-f)=T(f)$, we deduce that $\lf|T(f)-T(g)\r|\leq T(f-g)$.
This finishes the proof of Lemma \ref{szxz}.
\end{proof}

The following two lemmas come from \rm{\cite[Lemma 4.3(i), Theorem 3.1]{k14}},
respectively, and also can be found in \cite{ylk17}.

\begin{lemma}\label{ky4.3}
Let $\vz$ be a growth function as in Definition \ref{d2.3}.
For a given positive constant $\tilde{C}$,
there exists a positive constant $C$ such that, for any $\lz\in(0,\,\fz)$,
$$\int_\rn\vz\lf(x,\,\frac{|f(x)|}{\lambda}\r)\,dx\le\tilde{C} \ implies \ that \ \|f\|_{L^\vz}\le C \lambda.$$
\end{lemma}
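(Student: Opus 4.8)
The plan is to prove the contrapositive-free statement directly by dualizing the Luxembourg--Nakano functional. Recall that by definition $\|f\|_{L^\vz}=\inf\{\lz\in(0,\fz):\int_\rn\vz(x,|f(x)|/\lz)\,dx\le1\}$, so it suffices to exhibit, for the hypothesis $\int_\rn\vz(x,|f(x)|/\lambda)\,dx\le\tilde C$, some explicit $\mu=\mu(\lambda,\tilde C)$ with $\mu/\lambda$ bounded above by an absolute constant depending only on $\vz$ and $\tilde C$, such that $\int_\rn\vz(x,|f(x)|/\mu)\,dx\le1$; then $\|f\|_{L^\vz}\le\mu\le C\lambda$ with $C:=\mu/\lambda$.

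The mechanism for passing from the bound $\tilde C$ down to $1$ is the uniformly lower type $p$ property of $\vz$ (which holds for some $p\in(0,1]$ since $\vz$ is a growth function). Write $|f(x)|/\mu=(|f(x)|/\lambda)\cdot(\lambda/\mu)$ and choose $\mu\ge\lambda$, so that the dilation factor $s:=\lambda/\mu$ lies in $(0,1]$. Applying uniformly lower type $p$ pointwise in $x$ gives
\begin{align*}
\int_\rn\vz\lf(x,\,\frac{|f(x)|}{\mu}\r)\,dx
=\int_\rn\vz\lf(x,\,\frac{\lambda}{\mu}\cdot\frac{|f(x)|}{\lambda}\r)\,dx
\le C_\vz\lf(\frac{\lambda}{\mu}\r)^{p}\int_\rn\vz\lf(x,\,\frac{|f(x)|}{\lambda}\r)\,dx
\le C_\vz\lf(\frac{\lambda}{\mu}\r)^{p}\tilde C.
\end{align*}
Now simply pick $\mu:=\lambda\,(C_\vz\tilde C)^{1/p}$ if $C_\vz\tilde C\ge1$ (so that $\mu\ge\lambda$ and $s\in(0,1]$ as required), which forces the right-hand side to be exactly $1$; if $C_\vz\tilde C<1$ one may just take $\mu:=\lambda$ and use that $\int_\rn\vz(x,|f(x)|/\lambda)\,dx\le\tilde C<1$ directly. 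In either case $\|f\|_{L^\vz}\le\mu\le\max\{1,(C_\vz\tilde C)^{1/p}\}\,\lambda=:C\lambda$, and $C$ depends only on $\vz$ (through $C_\vz$ and $p$) and on $\tilde C$, not on $f$ or $\lambda$. This completes the argument.

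I do not anticipate a serious obstacle here; the only point requiring a little care is the bookkeeping on the direction of the inequality, namely ensuring the rescaling factor $\lambda/\mu$ is genuinely in $(0,1]$ so that uniformly \emph{lower} type (rather than upper type) applies, and treating the harmless case $C_\vz\tilde C<1$ separately. One should also note that when $f\equiv0$ the claim is trivial, so we may assume $f\not\equiv0$; and strictly speaking the displayed integral inequality presupposes $\int_\rn\vz(x,|f(x)|/\lambda)\,dx$ is finite, which is exactly the hypothesis. No appeal to Lemma \ref{qcytj} or to completeness is needed --- this is a soft consequence of uniformly lower type alone.
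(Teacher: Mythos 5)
Your proof is correct: the rescaling $|f|/\mu=(\lambda/\mu)\cdot(|f|/\lambda)$ with $\lambda/\mu\in(0,1]$ and the uniformly lower type $p$ property is exactly the standard mechanism, and the case split is harmless since $C_\vz\ge1$ (take $s=1$ in the lower type inequality), so $C_\vz\tilde C<1$ indeed forces $\tilde C<1$. The paper itself does not prove this lemma but quotes it from \cite[Lemma 4.3(i)]{k14}, and the argument there is essentially the same dilation-by-lower-type computation you give, so your proposal matches the intended proof.
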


\begin{lemma}\label{yzfj}
Let $(\vz,\,q,\,s)$ be an admissible triplet as in Definition \ref{d2.11}. Then
$$H^\vz=H^{\vz,\,q,\,s}_{{\rm{at}}}$$
with equivalent (quasi-)norms.
\end{lemma}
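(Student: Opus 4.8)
The plan is to establish, for every admissible triplet $(\vz,\,q,\,s)$, the chain of continuous inclusions
$H^\vz\hookrightarrow H^{\vz,\,\fz,\,s}_{\rm{at}}\hookrightarrow H^{\vz,\,q,\,s}_{\rm{at}}\hookrightarrow H^\vz$,
from which the asserted coincidence with equivalent quasi-norms follows. The middle inclusion is automatic: since $\|a\|_{L^q_\vz(B)}\le\|a\|_{L^\fz(B)}$ and $\vz(B,\,\cdot)$ is nondecreasing, any $(\vz,\,\fz,\,s)$-atom is a $(\vz,\,q,\,s)$-atom, and an $(\vz,\,\fz,\,s)$-atomic decomposition of $f$ is a fortiori a $(\vz,\,q,\,s)$-atomic one with no larger $\Lz$-norm. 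So the substance is the first and the last inclusions.

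\emph{Atomic decomposition implies $H^\vz$ (the last inclusion, for any admissible $q$).} The first task is a pointwise estimate for the grand maximal function $a^\ast$ of a single $(\vz,\,q,\,s)$-atom $a$ supported in a ball $B:=x_B+B_{r_B}$. Fix $q_0\in(q(\vz),\,q)$, which is possible precisely because $q>q(\vz)$. The size condition on $a$ gives $a^\ast(x)\ls[\cm(|a|^{q_0})(x)]^{1/q_0}$ for all $x\in\rn$ (to be used on $2B$), while the vanishing moments of order $\le s$, with $s\ge m(\vz)$, give for $x\notin 2B$ the sharper, decaying bound $a^\ast(x)\ls\|a\|_{L^1(B)}\,r_B^{s+1}/|x-x_B|^{n+s+1}$. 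Now let $f=\sum_j b_j$ be an admissible decomposition with $\Lz_q(\{b_j\}_j)=\lz$, write $b_j=c_j a_j$ with $a_j$ a $(\vz,\,q,\,s)$-atom supported in $B_j:=x_j+B_{r_j}$, and use $f^\ast\le\sum_j|c_j|a_j^\ast$ together with Lemma \ref{ckj}, the two pointwise bounds above (treating $2B_j$ and its complement separately), the weighted $L^r$-boundedness of $\cm$ for a suitable $r\in(1,\,\fz)$ (valid uniformly in $t$ since $\vz(\cdot,\,t)\in\aa_\fz$), the uniform reverse-Hölder inequality, the uniformly lower and upper type properties of $\vz$, and the definition of $\Lz_q$, to arrive at $\int_\rn\vz(x,\,f^\ast(x)/\lz)\,dx\ls1$. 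Lemma \ref{ky4.3} then yields $\|f\|_{H^\vz}=\|f^\ast\|_{L^\vz}\ls\lz$, and taking the infimum over admissible decompositions of $f$ gives $H^{\vz,\,q,\,s}_{\rm{at}}\hookrightarrow H^\vz$.

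\emph{The Calderón--Zygmund decomposition (the first inclusion).} This is the core of the proof. Given $f\in H^\vz$, so $f^\ast\in L^\vz$ is finite a.e., set $\Oz_k:=\{x\in\rn:\ f^\ast(x)>2^k\}$ for each $k\in\zz$, an open proper subset of $\rn$. Take a Whitney decomposition $\Oz_k=\bigcup_i Q_{k,i}$ into cubes whose side length is comparable to their distance to $\Oz_k^\complement$, a smooth partition of unity $\{\eta_{k,i}\}_i$ subordinate to a mild dilation of $\{Q_{k,i}\}_i$ with bounded overlap, and set $b_{k,i}:=(f-P_{k,i})\eta_{k,i}$, where $P_{k,i}$ is the unique polynomial of degree $\le s$ with $\int_\rn(f-P_{k,i})(y)\eta_{k,i}(y)y^\az\,dy=0$ for all $\az\in\nn^n$ with $|\az|\le s$. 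With $g_k:=f-\sum_i b_{k,i}$, one shows by the standard arguments that $\|g_k\|_{L^\fz}\ls2^k$, $g_k\to f$ in $\cs'$ as $k\to\fz$ and $g_k\to0$ in $\cs'$ as $k\to-\fz$; telescoping $f=\sum_{k\in\zz}(g_{k+1}-g_k)$ and regrouping the terms, one writes $f=\sum_{k\in\zz}\sum_i h_{k,i}$ in $\cs'$, where each $h_{k,i}$ is supported in a fixed dilate $\wz Q_{k,i}\subset\Oz_k$, has vanishing moments up to order $s$, and satisfies $\|h_{k,i}\|_{L^\fz}\ls2^k$; thus $h_{k,i}=c_{k,i}a_{k,i}$ for a $(\vz,\,\fz,\,s)$-atom $a_{k,i}$ with $|c_{k,i}|\ls2^k\|\chi_{\wz Q_{k,i}}\|_{L^\vz}$. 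It remains to check the summability condition of Definition \ref{d2.11}(iii): by the bounded overlap $\sum_i\chi_{\wz Q_{k,i}}\ls\chi_{\Oz_k}$ one bounds $\sum_{k,i}\vz(\wz Q_{k,i},\,2^k/\lz)\ls\sum_k\vz(\Oz_k,\,2^k/\lz)$, which a layer-cake argument combined with the uniformly lower type property and $\Oz_k\subset\{f^\ast>2^k\}$ bounds in turn by $\ls\int_\rn\vz(x,\,f^\ast(x)/\lz)\,dx$; choosing $\lz:=\|f^\ast\|_{L^\vz}=\|f\|_{H^\vz}$ and applying Lemma \ref{qcytj}(i), this is $\ls1$, whence $\|f\|_{H^{\vz,\,\fz,\,s}_{\rm{at}}}\ls\|f\|_{H^\vz}$.

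\emph{Main obstacle.} The real difficulty is concentrated in the Calderón--Zygmund step. Because the spatial variable $x$ and the scalar $t$ are inseparable in $\vz(x,\,t)$, the passage from the $L^\fz$-size bound on $h_{k,i}$ to the atomic summation condition, and the uniform control of the polynomials $P_{k,i}$ and of the correction terms making up $g_{k+1}-g_k$, must be carried out through the $\aa_\fz$ hypothesis (and its self-improved reverse-Hölder property) and the uniform type inequalities $\vz(E,\,st)\ls\max\{s,\,s^p\}\vz(E,\,t)$ and $\vz(E,\,st)\gtrsim\min\{s,\,s^p\}\vz(E,\,t)$; similarly, the convergence $g_k\to f$ in $\cs'$ and the distributional identity $f=\sum_{k,i}h_{k,i}$ rely on combining the uniform bound $\|g_k\|_{L^\fz}\ls2^k$ with a pointwise control of $f^\ast$ outside $\Oz_k$. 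All of this is carried out in full detail in \cite[Section~3]{k14}; the above only indicates the scheme.
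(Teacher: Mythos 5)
Your outline is correct and is essentially the argument the paper relies on: the paper does not prove this lemma itself but quotes it from Ky \cite[Theorem 3.1]{k14}, and your two nontrivial inclusions (maximal-function estimates for $(\vz,\,q,\,s)$-atoms giving $H^{\vz,\,q,\,s}_{\rm{at}}\hookrightarrow H^\vz$, and the Calder\'on--Zygmund decomposition adapted to $f^\ast$ giving $H^\vz\hookrightarrow H^{\vz,\,\fz,\,s}_{\rm{at}}$) are exactly the scheme of that cited proof. So this matches the paper's approach; no further comparison is needed.
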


\begin{lemma}\label{cm}{\rm{\cite[Remark 4.1.4]{ylk17}}}
Let $\vz$ be a growth function as in Definition \ref{d2.3}.
%satisfying the uniformly locally dominated convergence condition.
Then $H^\vz\cap L^2$ is dense in $H^\vz$.
\end{lemma}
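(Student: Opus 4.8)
The plan is to exploit the atomic decomposition of $H^\vz$ provided by Lemma \ref{yzfj}. Let $f\in H^\vz$ and fix an admissible triplet $(\vz,\,q,\,s)$ with $q\in(q(\vz),\,\fz)$ finite (so that each atom lies in $L^2$, using $L^q_\vz(B)\subset L^2(B)$ on a fixed ball via the $\aa_\fz$/reverse-Hölder properties of $\vz(\cdot\,,t)$; more precisely, a $(\vz,\,q,\,s)$-atom $a$ supported in a ball $B$ satisfies $\|a\|_{L^2}\ls|B|^{1/2-1/q}\|a\|_{L^q}<\fz$ after normalizing $\vz(\cdot\,,1)$, so $a\in L^2$). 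By Lemma \ref{yzfj}, write $f=\sum_{j}b_j$ in $\cs'$, where each $b_j$ is a multiple of a $(\vz,\,q,\,s)$-atom supported in a ball $x_j+B_{r_j}$, and $\sum_j\vz(x_j+B_{r_j},\,\|b_j\|_{L^q_\vz(x_j+B_{r_j})})<\fz$, with $\Lz_q(\{b_j\}_j)\sim\|f\|_{H^\vz}$. For each $N\in\zz_+$ set $f_N:=\sum_{j=1}^N b_j$. Since each $b_j\in L^2$, we have $f_N\in L^2$; since $f_N$ is itself a finite atomic combination, $f_N\in H^\vz$ as well, so $f_N\in H^\vz\cap L^2$.

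It remains to show $\|f-f_N\|_{H^\vz}\to0$ as $N\to\fz$. Writing $f-f_N=\sum_{j>N}b_j$, this is again an admissible atomic decomposition, so by Lemma \ref{yzfj},
$$\|f-f_N\|_{H^\vz}\ls\Lz_q\lf(\{b_j\}_{j>N}\r)=\inf\lf\{\lz\in(0,\,\fz):\ \sum_{j>N}\vz\lf(x_j+B_{r_j},\,\frac{\|b_j\|_{L^q_\vz(x_j+B_{r_j})}}{\lz}\r)\le1\r\}.$$
The quantity $\Lz_q(\{b_j\}_{j>N})$ is the tail of a convergent series in the following sense: because $\sum_{j}\vz(x_j+B_{r_j},\,\|b_j\|_{L^q_\vz}/\lz_0)\le1$ for $\lz_0:=\Lz_q(\{b_j\}_j)$ (using the uniformly lower type $p$ property of $\vz$, which makes the defining set of $\Lz_q$ closed from the left, so the infimum is attained), the partial tails $\sum_{j>N}\vz(x_j+B_{r_j},\,\|b_j\|_{L^q_\vz}/\lz_0)\to0$ as $N\to\fz$. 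Fix $\ez\in(0,\,1]$; by the uniformly lower type $p$ property of $\vz$, for $\lz:=\ez\lz_0$ one has $\sum_{j>N}\vz(x_j+B_{r_j},\,\|b_j\|_{L^q_\vz}/\lz)\le\ez^{-p}\sum_{j>N}\vz(x_j+B_{r_j},\,\|b_j\|_{L^q_\vz}/\lz_0)$, which is $\le1$ once $N$ is large enough (depending on $\ez$). Hence $\Lz_q(\{b_j\}_{j>N})\le\ez\lz_0$ for all large $N$, giving $\|f-f_N\|_{H^\vz}\to0$.

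The main obstacle is the bookkeeping that $\Lz_q(\{b_j\}_{j>N})\to0$: one must justify that the infimum defining $\Lz_q(\{b_j\}_j)$ is attained (or at least nearly attained) and then convert the resulting convergent series $\sum_j\vz(x_j+B_{r_j},\,\|b_j\|_{L^q_\vz}/\lz_0)\le1$ into smallness of the rescaled tails, for which the uniformly lower type $p$ estimate $\vz(x,\,st)\ls s^p\vz(x,\,t)$ for $s\in(0,\,1]$ is exactly the right tool. A secondary point to check carefully is that $b_j\in L^2$; this is where one invokes that $\vz(\cdot\,,t)\in\aa_\fz$ uniformly in $t$, so on the fixed ball $x_j+B_{r_j}$ the weighted $L^q_\vz$-norm controls the unweighted $L^q$-norm and hence (by Hölder on a bounded-measure set, as $q>q(\vz)\ge1$ can be taken $\ge2$, or directly if $q\ge2$; if $q<2$ one instead chooses the admissible $q$ large enough at the outset) the $L^2$-norm on that ball is finite. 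Everything else — that $f_N\in H^\vz$ and that $f=\sum_j b_j$ converges in $\cs'$ — is immediate from Definition \ref{d2.11} and Lemma \ref{yzfj}.
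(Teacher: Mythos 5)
The paper does not actually prove this lemma: it is quoted from \cite[Remark 4.1.4]{ylk17}. Your strategy --- decompose $f\in H^\vz$ into multiples of atoms via Lemma \ref{yzfj}, note that the partial sums $f_N$ lie in $H^\vz\cap L^2$, and show $\|f-f_N\|_{H^\vz}\ls\Lz_q(\{b_j\}_{j>N})\to0$ --- is the standard argument behind that remark, and its overall structure is correct.

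Two of your justifications, however, do not work as written. First, the tail-rescaling step is not a consequence of the uniformly lower type $p$ property: lower type gives $\vz(x,\ez^{-1}t)\gtrsim\ez^{-p}\vz(x,t)$ for $\ez\in(0,1]$, which is the wrong direction, so your claimed bound with the factor $\ez^{-p}$ is unjustified. What you need is the uniformly upper type $1$ property (equivalently \eqref{max}), which gives $\vz(x,\ez^{-1}t)\ls\ez^{-1}\vz(x,t)$; this factor is harmless and the argument then goes through. Moreover there is no need to claim that the infimum defining $\Lz_q$ is attained (the Orlicz functions here are not assumed continuous, so left-closedness of the defining set is not guaranteed): simply take any $\lz_1>\Lz_q(\{b_j\}_j)$, for which $\sum_j\vz(x_j+B_{r_j},\,\|b_j\|_{L^q_\vz(x_j+B_{r_j})}/\lz_1)\le1$ holds by the definition of the infimum, and run your tail argument with $\lz_1$ in place of $\lz_0$. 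Second, the membership $b_j\in L^2$ is not justified by the inequality $\|a\|_{L^2}\ls|B|^{1/2-1/q}\|a\|_{L^q}$: the atomic size condition controls only the weighted norm $\|a\|_{L^q_\vz(B)}$, and since $\vz(\cdot,t)\in\aa_\fz$ may be very small on part of $B$, this does not dominate the unweighted $L^q(B)$ norm at the same exponent. The clean fix is to run the whole argument with $(\vz,\,\fz,\,s)$-atoms, which are bounded with compact support and hence trivially in $L^2$; alternatively, keep $q$ finite but choose it large enough, say $q\ge 2r$ where $\vz\in\aa_r$, and use H\"older's inequality together with the $\aa_r$ condition to obtain $\|a\|_{L^2}\ls|B|^{1/2}\|a\|_{L^q_\vz(B)}$ uniformly in $t\in(0,\fz)$. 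With these two repairs your proof is complete.
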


Recall that a {\it quasi-Banach space} $\mathcal{B}$ is a linear space endowed with a quasi-norm
$\|\cdot\|_\mathcal{B}$ which is nonnegative, non-degenerate (i.e., $\|f\|_\mathcal{B}=0$
if and only if $f={\bf 0}$), homogeneous, and obeys the quasi-triangle inequality, i.e., there
exists a constant $K$ no less than 1 such that, for any $f, g\in\mathcal{B}$,
$\|f+g\|_\mathcal{B}\leq K\lf(\|f\|_\mathcal{B}+\|g\|_\mathcal{B}\r)$.

\begin{lemma}\label{ardl}{\rm{\cite[Aoki-Rolewicz theorem]{s84}}}
Let $\mathcal{B}$ be a quasi-Banach space and $K$ a constant associated with $\cb$ as above.
Then, for any $f,\,g\in\mathcal{B}$,
$$\|f+g\|^\gamma_\mathcal{B}\leq\|f\|^\gamma_\mathcal{B}+\|g\|^\gamma_\mathcal{B},$$
where $\gamma:=\lf[\log_{2}(2K)\r]^{-1}$.
\end{lemma}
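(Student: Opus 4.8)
Since this is the Aoki--Rolewicz theorem, the plan is to run its classical proof, which manufactures from $\|\cdot\|_\mathcal{B}$ an equivalent quasi-norm that is $\gamma$-subadditive and which we then take to \emph{be} $\|\cdot\|_\mathcal{B}$. Write $\gamma:=[\log_2(2K)]^{-1}\in(0,\,1]$, so that $2K=2^{1/\gamma}$, and for $x\in\mathcal{B}$ define the functional
$$|x|:=\inf\lf\{\lf(\sum_{i=1}^N\|x_i\|^\gamma_\mathcal{B}\r)^{1/\gamma}:\ N\in\zz_+,\ x=\sum_{i=1}^N x_i\ \text{in}\ \mathcal{B}\r\}.$$
The trivial decomposition gives $|x|\le\|x\|_\mathcal{B}$; the functional is clearly homogeneous; and concatenating near-optimal decompositions of $x$ and of $y$ gives $|x+y|^\gamma\le\sum_i\|x_i\|^\gamma_\mathcal{B}+\sum_j\|y_j\|^\gamma_\mathcal{B}$, whence, taking infima, $|x+y|^\gamma\le|x|^\gamma+|y|^\gamma$. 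Thus $|\cdot|$ already obeys the asserted inequality, and the lemma follows once $|\cdot|$ is shown to be comparable with $\|\cdot\|_\mathcal{B}$ (which in particular forces $|x|=0\Rightarrow x=\mathbf{0}$, so that $|\cdot|$ is a genuine quasi-norm).

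The one nontrivial point is therefore the lower estimate: there is a constant $C$, depending only on $K$, such that every finite decomposition $x=\sum_{i=1}^N x_i$ satisfies
$$\lf\|\sum_{i=1}^N x_i\r\|_\mathcal{B}\le C\lf(\sum_{i=1}^N\|x_i\|^\gamma_\mathcal{B}\r)^{1/\gamma}.$$
To prove this I would normalize so that $\sum_i\|x_i\|^\gamma_\mathcal{B}=1$, reindex so that $\|x_1\|_\mathcal{B}\ge\|x_2\|_\mathcal{B}\ge\cdots$, and group the indices into the dyadic blocks $B_k:=\{i:\ 2^{-(k+1)}<\|x_i\|^\gamma_\mathcal{B}\le 2^{-k}\}$, $k\in\nn$, which forces $\#B_k<2^{k+1}$. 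On each block the summands have comparable size, so iterating the quasi-triangle inequality along a \emph{balanced} binary tree over the indices of $B_k$ yields $\|\sum_{i\in B_k}x_i\|_\mathcal{B}\le K^{\lceil\log_2\#B_k\rceil}\sum_{i\in B_k}\|x_i\|_\mathcal{B}$, and the identity $2K=2^{1/\gamma}$ converts this into a bound of the form $\|\sum_{i\in B_k}x_i\|_\mathcal{B}\ls(\#B_k\,2^{-k})^{1/\gamma}$; since $\sum_k\#B_k\,2^{-k}\ls1$, feeding the block-sums $\sum_{i\in B_k}x_i$ back into the same blocking-plus-balanced-tree scheme (again exploiting $2K=2^{1/\gamma}$ to make the resulting series converge) closes the estimate with a constant that depends on $K$ alone.

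The main obstacle is exactly this last step: the crude per-block bound is only $O(1)$ in $k$, while the number and the indices of the nonempty blocks cannot be bounded a priori, so one cannot simply add the blocks up and must instead extract the genuinely $k$-decaying form above and iterate the dyadic machinery, keeping careful track of the constant at each stage. Granting the displayed estimate, the comparison $|\cdot|\sim\|\cdot\|_\mathcal{B}$ follows, $|\cdot|$ is an equivalent $\gamma$-subadditive quasi-norm on $\mathcal{B}$, and — since every quasi-Banach space used later in the paper depends on its quasi-norm only through its equivalence class — replacing $\|\cdot\|_\mathcal{B}$ by $|\cdot|$ completes the proof of the lemma.
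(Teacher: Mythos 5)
The paper offers no argument for this lemma at all: it is quoted verbatim from Rolewicz's book, so any honest proof you give is ``extra'' relative to the paper. Your outline is the standard one (define $|x|:=\inf\{(\sum_i\|x_i\|_\mathcal{B}^\gamma)^{1/\gamma}:x=\sum_i x_i\}$, check homogeneity, $|x|\le\|x\|_\mathcal{B}$, and $\gamma$-subadditivity by concatenating decompositions, then renorm), and your remark that the lemma as literally stated only holds after passing to this equivalent quasi-norm is a fair reading of how it is used later. The problem is that the only nontrivial step --- the lower estimate $\|\sum_{i=1}^N x_i\|_\mathcal{B}\le C(K)\,(\sum_{i=1}^N\|x_i\|_\mathcal{B}^\gamma)^{1/\gamma}$ --- is not actually proved by your sketch. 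Your per-block computation is correct: with $z_k:=\sum_{i\in B_k}x_i$ one gets $\|z_k\|_\mathcal{B}\lesssim(\#B_k\,2^{-k})^{1/\gamma}$, hence $\|z_k\|_\mathcal{B}^\gamma\lesssim s_k$ where $s_k$ is the $\gamma$-mass of the $k$-th block and $\sum_k s_k\le1$. But at that point you are facing exactly the problem you started with: finitely many vectors $z_k$ whose $\gamma$-power norms sum to $O(1)$, and you must bound $\|\sum_k z_k\|_\mathcal{B}$. ``Feeding the block sums back into the same blocking-plus-balanced-tree scheme'' is circular: summing a whole block destroys the dyadic level structure (the $\|z_k\|$ need not decay in $k$ at all), the recursion need not reduce the number of vectors (blocks can be singletons), and each pass multiplies the constant by a fixed factor $>1$ while the number of passes is not bounded in terms of $K$. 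So the key inequality, and with it the lemma, is left unproved.

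The standard way to close this gap is not block summation but \emph{pairwise merging}. Normalize $\sum_i\|x_i\|_\mathcal{B}^\gamma\le1$ and attach to each vector a level $k_i\in\mathbb{N}$ with $\|x_i\|_\mathcal{B}^\gamma\le2^{-k_i}$ and $\sum_i2^{-k_i}\le2$. If two vectors share the same level $k$, replace them by their sum: since $2K=2^{1/\gamma}$, the sum has $\gamma$-power norm at most $2^{-(k-1)}$, so it sits at level $k-1$ and the weight $\sum_i 2^{-k_i}$ is unchanged; each merge strictly decreases the number of vectors, so the process terminates with vectors $y_1,y_2,\ldots$ at pairwise distinct levels, i.e.\ $\|y_j\|_\mathcal{B}\lesssim2^{-j/\gamma}$ after reordering. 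Now the nested iterated quasi-triangle inequality $\|\sum_j y_j\|_\mathcal{B}\le\sum_j K^j\|y_j\|_\mathcal{B}$ converges geometrically precisely because $K=2^{1/\gamma-1}<2^{1/\gamma}$, giving a bound $C(K)$ depending only on $K$. With this step in place your renorming argument is complete; without it (or a reference to the proof in Rolewicz or, e.g., Kalton--Peck--Roberts), the proposal has a genuine gap at its central point.
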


\begin{lemma}\label{nfbds}
Let $\mathcal{B}$ be a quasi-Banach space equipped with the quasi-norm $\|\cdot\|_\mathcal{B}$.
For any $\{f_k\}_{k\in\zz_+}\subset\mathcal{B}$ and $f\in\mathcal{B}$,
if $\mathop{\lim}\limits_{k\rightarrow\fz} \lf\|f_k-f\r\|_\mathcal{B}=0$, then
$$\lim_{k\rightarrow\fz}\lf\|f_k\r\|_\mathcal{B}=\lf\|f\r\|_\mathcal{B}.$$
\end{lemma}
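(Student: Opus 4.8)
The statement is the familiar continuity of the (quasi-)norm with respect to its own (quasi-)metric convergence, and the only subtlety compared with the Banach case is that the ordinary triangle inequality is unavailable; we have merely $\|u+v\|_\mathcal{B}\le K(\|u\|_\mathcal{B}+\|v\|_\mathcal{B})$ for some $K\ge1$. The plan is to bypass this by invoking Lemma \ref{ardl} (the Aoki--Rolewicz theorem), which upgrades the quasi-triangle inequality to a genuine subadditivity after raising to the power $\gamma:=[\log_2(2K)]^{-1}\in(0,\,1]$: for any $u,\,v\in\mathcal{B}$, $\|u+v\|_\mathcal{B}^\gamma\le\|u\|_\mathcal{B}^\gamma+\|v\|_\mathcal{B}^\gamma$. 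This makes $\|\cdot\|_\mathcal{B}^\gamma$ behave like a norm (it is subadditive), and reduces the problem to the standard reverse-triangle argument.

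First I would apply the subadditivity in both directions. Writing $f_k=(f_k-f)+f$ gives $\|f_k\|_\mathcal{B}^\gamma\le\|f_k-f\|_\mathcal{B}^\gamma+\|f\|_\mathcal{B}^\gamma$, and writing $f=(f-f_k)+f_k$ gives $\|f\|_\mathcal{B}^\gamma\le\|f-f_k\|_\mathcal{B}^\gamma+\|f_k\|_\mathcal{B}^\gamma$. Combining these two estimates and using the homogeneity of the quasi-norm (so that $\|f-f_k\|_\mathcal{B}=\|f_k-f\|_\mathcal{B}$) yields the two-sided bound
\begin{align*}
\lf|\,\|f_k\|_\mathcal{B}^\gamma-\|f\|_\mathcal{B}^\gamma\,\r|\le\|f_k-f\|_\mathcal{B}^\gamma.
\end{align*}
By hypothesis $\|f_k-f\|_\mathcal{B}\to0$ as $k\to\fz$, and since $\gamma>0$ the map $t\mapsto t^\gamma$ is continuous at $0$, so the right-hand side tends to $0$; hence $\|f_k\|_\mathcal{B}^\gamma\to\|f\|_\mathcal{B}^\gamma$. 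Finally, applying the continuous function $t\mapsto t^{1/\gamma}$ on $[0,\,\fz)$ gives $\|f_k\|_\mathcal{B}\to\|f\|_\mathcal{B}$, which is the desired conclusion.

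There is no real obstacle here; the whole content is the observation that Lemma \ref{ardl} lets one transfer the classical argument verbatim to the quasi-Banach setting. The only points requiring a word of care are that $\gamma$ indeed lies in $(0,\,1]$ (immediate from $K\ge1$, so $2K\ge2$ and $\log_2(2K)\ge1$), which guarantees both $t\mapsto t^\gamma$ and $t\mapsto t^{1/\gamma}$ are well-defined and continuous on $[0,\,\fz)$, and that the quasi-norm is homogeneous so that the sign change $f_k-f\leftrightarrow f-f_k$ costs nothing. One could alternatively avoid the power trick and argue directly from the quasi-triangle inequality, but that only yields $\|f_k\|_\mathcal{B}\le K\|f_k-f\|_\mathcal{B}+K\|f\|_\mathcal{B}$ and a matching lower bound, which is not quite enough to pin down the limit; the Aoki--Rolewicz route is the clean one and is already available in the excerpt.
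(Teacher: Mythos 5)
Your proposal is correct and follows essentially the same route as the paper: both invoke Lemma \ref{ardl} to get $\gamma$-subadditivity of $\|\cdot\|_{\mathcal{B}}^\gamma$, derive the two-sided bound $\lf|\|f_k\|_{\mathcal{B}}^\gamma-\|f\|_{\mathcal{B}}^\gamma\r|\le\|f_k-f\|_{\mathcal{B}}^\gamma$, and let $k\to\fz$. The only (harmless) difference is that you make explicit the final passage from $\|f_k\|_{\mathcal{B}}^\gamma\to\|f\|_{\mathcal{B}}^\gamma$ to $\|f_k\|_{\mathcal{B}}\to\|f\|_{\mathcal{B}}$ via continuity of $t\mapsto t^{1/\gamma}$, which the paper leaves implicit.
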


\begin{proof}
By Lemma \ref{ardl}, we obtain that, for any $k\in\zz_+$
$$\lf\|f_k\r\|^\gamma_\mathcal{B}-\lf\|f\r\|^\gamma_\mathcal{B}
=\lf\|f_k-f+f\r\|^\gamma_\mathcal{B}-\lf\|f\r\|^\gamma_\mathcal{B}
\leq\lf\|f_k-f\r\|^{\gamma}_\mathcal{B},$$
where $\gz$ is a harmless constant as in Lemma \ref{ardl}.
Similarly, we have
$\lf\|f\r\|^\gamma_\mathcal{B}-\lf\|f_k\r\|^\gamma_\mathcal{B}\leq\lf\|f-f_k\r\|^{\gamma}_\mathcal{B},$
which, together with the above inequality, implies that
$$\lf|\lf\|f_k\r\|^\gamma_\mathcal{B}-\lf\|f\r\|^\gamma_\mathcal{B}\r|\le\lf\|f_k-f\r\|^{\gamma}_\mathcal{B}\rightarrow0 \ {\rm{as}} \ k\rightarrow\fz.$$
This finishes the proof of Lemma \ref{nfbds}.
\end{proof}

The following theorem gives a boundedness criterion of operators from $H^\vz$ to $L^\vz$.
\begin{theorem}\label{yt}
Let $\vz$ be a growth function as in Definition \ref{d2.3}.
%satisfying the uniformly locally dominated convergence condition.
Suppose that a linear or a positive sublinear operator $T$ is bounded on $L^2$.
If there exists a positive constant $C$ such that,
for any $\lz\in(0,\,\fz)$ and multiple of a $(\vz,\,q,\,s)$-atom $b$ associated with some ball $B\subset\rn$,
\begin{align}\label{a1}
 \int_{\rn}\vz\lf(x,\,\frac{|T(b)(x)|}{\lz}\r)\,dx \le C\vz\lf(B,\,\frac{\|b\|_{L^q_\vz(B)}}{\lz}\r),
\end{align}
then $T$ extends uniquely to a bounded operator from ${H^\vz}$ to $L^\vz$.
\end{theorem}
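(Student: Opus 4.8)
The plan is to combine the atomic decomposition of $H^\vz$ (Lemma \ref{yzfj}) with the hypothesis \eqref{a1}, using the Aoki-Rolewicz theorem (Lemma \ref{ardl}) and the completeness of $L^\vz$ (Theorem \ref{lwbx}) to make the extension argument rigorous. The subtlety is that $H^\vz$ is only a quasi-Banach space, so one cannot naively sum term by term; the $\gz$-subadditivity furnished by Lemma \ref{ardl} is exactly what tames this.

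First I would fix $f\in H^\vz\cap L^2$. By Lemma \ref{yzfj}, $f$ admits an atomic decomposition $f=\sum_j b_j$ in $\cs'$, where each $b_j$ is a multiple of a $(\vz,\,q,\,s)$-atom supported in a ball $B_j:=x_j+B_{r_j}$, with $\Lz_q(\{b_j\}_j)\ls\|f\|_{H^\vz}$. Since $T$ is bounded on $L^2$ and the partial sums $\sum_{j\le N}b_j$ converge to $f$ in $L^2$ (this is where $f\in L^2$ enters, via the standard fact that the atomic decomposition of an $H^\vz\cap L^2$ function also converges in $L^2$), we have $T(f)=\lim_{N\to\fz}T(\sum_{j\le N}b_j)$ in $L^2$, hence a subsequence converges almost everywhere. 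The positive-sublinearity (or linearity) of $T$ gives the pointwise bound $|T(f)(x)|\le\sum_j|T(b_j)(x)|$ for a.e.\ $x$. Now set $\lz:=\Lz_q(\{b_j\}_j)$. Using Lemma \ref{ckj} (the quasi-subadditivity of $\vz$ in the second variable), then \eqref{a1}, then the definition of $\Lz_q$,
\begin{align*}
\int_\rn\vz\lf(x,\,\frac{|T(f)(x)|}{\lz}\r)\,dx
&\ls\int_\rn\vz\lf(x,\,\frac{\sum_j|T(b_j)(x)|}{\lz}\r)\,dx
\ls\sum_j\int_\rn\vz\lf(x,\,\frac{|T(b_j)(x)|}{\lz}\r)\,dx\\
&\ls\sum_j\vz\lf(B_j,\,\frac{\|b_j\|_{L^q_\vz(B_j)}}{\lz}\r)\le1.
\end{align*}
By Lemma \ref{ky4.3} this yields $\|T(f)\|_{L^\vz}\ls\lz\sim\|f\|_{H^\vz}$.

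It remains to extend $T$ from the dense subspace $H^\vz\cap L^2$ (dense by Lemma \ref{cm}) to all of $H^\vz$. Given $f\in H^\vz$, choose $\{f_k\}_k\subset H^\vz\cap L^2$ with $f_k\to f$ in $H^\vz$; then $\{f_k\}_k$ is Cauchy in $H^\vz$, and by Lemma \ref{ardl} together with the estimate just proved, $\|T(f_k)-T(f_m)\|_{L^\vz}^\gz\le\|T(f_k-f_m)\|_{L^\vz}^\gz\ls\|f_k-f_m\|_{H^\vz}^\gz\to0$, so $\{T(f_k)\}_k$ is Cauchy in $L^\vz$; by the completeness of $L^\vz$ (Theorem \ref{lwbx}) it converges to some element, which we define to be $T(f)$. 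A routine check (again via Lemma \ref{ardl}) shows this limit is independent of the approximating sequence, that the extension agrees with $T$ on $H^\vz\cap L^2$, and that it inherits linearity or sublinearity. Finally, applying Lemma \ref{nfbds} on both sides of $\|T(f_k)\|_{L^\vz}\ls\|f_k\|_{H^\vz}$ and passing to the limit gives $\|T(f)\|_{L^\vz}\ls\|f\|_{H^\vz}$, proving boundedness of the extension; uniqueness of a continuous extension from a dense subspace is automatic. I expect the main technical point to be the a.e.\ pointwise control $|T(f)(x)|\le\sum_j|T(b_j)(x)|$ when $f$ is only known to be the $\cs'$-limit (and $L^2$-limit) of the partial sums — one must pass through an $L^2$-convergent subsequence and invoke the sublinearity carefully; the rest is bookkeeping with the quasi-norm.
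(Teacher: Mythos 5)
Your proposal is correct and follows essentially the same route as the paper's proof: atomic decomposition of $f\in H^\vz\cap L^2$ with convergence also in $L^2$, the a.e.\ pointwise bound $|T(f)|\le\sum_j|T(b_j)|$ via $L^2$-boundedness and sublinearity, then Lemma \ref{ckj}, the hypothesis \eqref{a1} with $\lz=\Lz_q(\{b_j\}_j)$ and Lemma \ref{ky4.3}, followed by the density extension using Lemma \ref{cm}, Theorem \ref{lwbx} and Lemma \ref{nfbds}. The only cosmetic difference is that the paper invokes the Calder\'on reproducing formula explicitly for the $L^2$-convergent atomic decomposition, where you call it a standard fact.
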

\begin{proof}
We first assume that $f\in H^\vz\cap L^2$.
By the well known Calder\'on reproducing formula (see also \cite[Theorem 2.14]{lfy15}),
we know that there exists a sequence of multiples of $(\vz,\,q,\,s)$-atoms $\{b_j\}_{j\in\zz_+}$
associated with balls $\{x_j+B_{r_j}\}_{j\in\zz_+}$ such that
\begin{align}\label{a2}
f=\lim_{k\rightarrow\fz}\sum_{j=1}^k b_j
=:\lim_{k\rightarrow\fz}f_k \ {\rm{in}} \ \cs' \ {\rm{and \ also \ in}} \ L^2.
\end{align}
From the assumption that the linear or positive sublinear operator $T$
is bounded on $L^2$, Lemma \ref{szxz} and \eqref{a2}, it follows that
\begin{align*}
\lim_{k\rightarrow\fz}\lf\|T(f)-T\lf(f_k\r)\r\|_{L^2}
\le \lim_{k\rightarrow\fz}\lf\|T\lf(f-f_k\r)\r\|_{L^2}\ls \lim_{k\rightarrow\fz}\lf\|f-f_k\r\|_{L^2}\sim0,
\end{align*}
which implies that
\begin{align}\label{z14}
T(f)=\lim_{k\rightarrow\fz}T\lf(f_k\r)
\le\lim_{k\rightarrow\fz}\sum_{j=1}^kT\lf(b_j\r)
=\sum_{j=1}^\fz T\lf(b_j\r) \ {\rm{almost \ everywhere}}.
\end{align}
By this, Lemma \ref{ckj} and \eqref{a1} with taking $\lambda=\Lz_q(\{b_j\}_j)$, we obtain %that, for any $\lambda\in(0,\,\fz)$,
\begin{align*}
\int_{\rn}\vz\lf(x,\,\frac{|T(f)(x)|}{\Lz_q(\{b_j\}_j)}\r)\,dx
&\ls \sum_{j=1}^{\fz}\int_{\rn}\vz\lf(x,\,\frac{|T(b_j)(x)|}{\Lz_q(\{b_j\}_j)}\r)\,dx \\
&\ls \sum_{j=1}^{\fz}\vz\lf(x_j+B_{r_j},\,
\frac{\|b_j\|_{L^q_\vz(x_j+B_{r_j})}}{\Lz_q(\{b_j\}_j)}\r)\ls1,
\end{align*}
which, together with Lemma \ref{ky4.3}, further implies that
$$\|T(f)\|_{L^\vz}\ls \Lz_q(\{b_j\}_j).$$
Taking infimum for all admissible decompositions of $f$ as above and using Lemma \ref{yzfj}, we obtain that, for any $f\in H^\vz\cap L^2$,
\begin{align}\label{a3}
\|T(f)\|_{L^\vz} \ls \|f\|_{H^{\vz,\,q,\,s}_{{\rm{at}}}} \sim \|f\|_{H^\vz}.
\end{align}

Next, suppose $f\in{H^\vz}$. By Lemma \ref{cm}, we know that there exists a sequence of
$\{f_j\}_{j\in\zz_+}\subset H^\vz\cap L^2$ such that
$f_j\rightarrow f$ as $j\rightarrow\fz$ in ${H^\vz}$.
Therefore, $\{f_j\}_{j\in\zz_+}$ is a Cauchy sequence in $H^\vz$.
From this, Lemma \ref{szxz} and \eqref{a3}, we deduce that, for any $j,\,k\in\zz_+$,
$$\lf\|T(f_j)-T(f_k)\r\|_{L^\vz} \leq \lf\|T(f_j-f_k)\r\|_{L^\vz}\ls \lf\|f_j-f_k\r\|_{H^\vz}.$$
Thus, by this, we know that
$\{T(f_j)\}_{j\in\zz_+}$ is a Cauchy sequence in $L^\vz$.
Applying Theorem \ref{lwbx}, we conclude that there exists some $g\in L^\vz$
such that $T(f_j)\rightarrow g$ as $j\rightarrow\fz$ in $L^\vz$.
Consequently, define $T(f):=g$. Below, we claim that $T(f)$ is well defined. Indeed,
for any other sequence $\{f'_j\}_{j\in\zz_+}\subset H^\vz\cap L^2$ satisfying
$f'_j\rightarrow f$ as $j\rightarrow\fz$ in ${H^\vz}$, by Lemma \ref{szxz} and \eqref{a3},
we have
\begin{align*}
\lf\|T(f'_j)-T(f)\r\|_{L^\vz}
&\ls \lf\|T(f'_j)-T(f_j)\r\|_{L^\vz}+\lf\|T(f_j)-g\r\|_{L^\vz} \\
&\ls \lf\|f'_j-f_j\r\|_{H^\vz}+\lf\|T(f_j)-g\r\|_{L^\vz} \\
&\ls \lf\|f'_j-f\r\|_{H^\vz}+\lf\|f-f_j\r\|_{H^\vz}+\lf\|T(f_j)-g\r\|_{L^\vz}\rightarrow0 \ {\rm{as}} \ j\rightarrow\fz,
\end{align*}
which is wished. From this, Lemma \ref{nfbds} and \eqref{a3}, it follows that, for any $f\in{H^\vz}$,
$$\|T(f)\|_{L^\vz}=\|g\|_{L^\vz}=\lim_{j\rightarrow\fz}\|T(f_j)\|_{L^\vz}
\ls \lim_{j\rightarrow\fz}\|f_j\|_{H^\vz}\thicksim\|f\|_{H^\vz}.$$
This finishes the proof of Theorem \ref{yt}.
\end{proof}

To show the boundedness criterion of operators from ${H^\vz}$ to $WL^\vz$,
we need the following superposition principle of weak type estimates.
\begin{lemma}\label{dj}{\rm{\cite[Lemma 7.13]{bckyy13}}}
Let $\vz$ be a growth function as in Definition \ref{d2.3}
satisfying $I(\vz)\in(0,\,1)$, where $I(\vz)$ is as in \eqref{e2.1.1}.
Assume that $\{f_j\}_{j\in\zz+}$ is a sequence of measurable functions such that, for some $\lz\in(0,\,\fz)$,
$$\sum_{j\in\zz+}\sup_{\az\in(0,\,\fz)}\vz\lf(\{|f_j|>\az\},\,\frac{\az}{\lz}\r)<\fz.$$
Then there exists a positive constant $C$, depending only on $\vz$, such that, for any $\eta\in(0,\,\fz)$,
$$\vz\lf(\lf\{ \sum_{j\in\zz+}|f_j|>\eta \r\},\,\frac{\eta}{\lz}\r)
\le C\sum_{j\in\zz+}\sup_{\az\in(0,\,\fz)}\vz\lf(\{|f_j|>\az\},\,\frac{\az}{\lz}\r).$$
\end{lemma}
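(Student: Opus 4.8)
The plan is to distill from the hypothesis the single uniform-in-height inequality it actually provides, dispose of the ``one large term'' part of $\{\sum_j|f_j|>\eta\}$ by the trivial union bound, and treat the remaining part by a multiplicity argument in which the assumption $I(\vz)<1$ is used to sum a geometric series. First I would reduce to $\lz=1$: after replacing $f_j$ and $\eta$ by $f_j/\lz$ and $\eta/\lz$ (which automatically rescales the inner variable $\az$ inside the suprema) the statement is unchanged. Put $A_j:=\sup_{\az\in(0,\fz)}\vz(\{|f_j|>\az\},\az)$, $S:=\sum_{j\in\zz_+}A_j<\fz$, $g:=\sum_{j\in\zz_+}|f_j|$, and $M_\az(y):=\#\{j\in\zz_+:\ |f_j(y)|>\az\}$. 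Since, for each fixed $t$, the set function $E\mapsto\vz(E,t)$ is a measure, Tonelli's theorem gives, for every $\az\in(0,\fz)$,
\[
\int_\rn\vz(y,\az)\,M_\az(y)\,dy=\sum_{j\in\zz_+}\vz(\{|f_j|>\az\},\az)\le S,
\]
and this is the only way the hypothesis will be used. Note that, since $S<\fz$ and weights in $\aa_\fz$ are positive almost everywhere, $M_\az(y)<\fz$ for a.e.\ $y$ and every $\az$. Finally fix $p_1\in(I(\vz),\,1)$, so that $\vz$ is of uniformly upper type $p_1$; this is the only place the hypothesis $I(\vz)<1$ is needed.

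Next, fix $\eta\in(0,\fz)$ and split $\{g>\eta\}=E_1\cup E_2$ with $E_1:=\{g>\eta\}\cap\bigcup_{j\in\zz_+}\{|f_j|>\eta/2\}$ and $E_2:=\{g>\eta\}\setminus E_1$. For $E_1$, countable subadditivity of $\vz(\cdot,\eta)$, the uniform upper type (to replace height $\eta$ by $\eta/2$), and the definition of $A_j$ yield
\[
\vz(E_1,\eta)\le\sum_{j\in\zz_+}\vz(\{|f_j|>\eta/2\},\eta)\ls\sum_{j\in\zz_+}\vz(\{|f_j|>\eta/2\},\eta/2)\le\sum_{j\in\zz_+}A_j=S.
\]

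The substance is the set $E_2$, on which $|f_j(y)|\le\eta/2$ for every $j$ while $\sum_j|f_j(y)|>\eta$, so that many $f_j$ must be ``moderately large'' at $y$. For $y\in E_2$ and $k\in\zz_+$ let $m_k(y):=\#\{j:\ 2^{-k-1}\eta<|f_j(y)|\le2^{-k}\eta\}$. Since $(0,\eta/2]=\bigsqcup_{k\ge1}(2^{-k-1}\eta,2^{-k}\eta]$ and $\eta<g(y)\le\sum_{k\ge1}2^{-k}\eta\,m_k(y)$, we get $\sum_{k\ge1}2^{-k}m_k(y)>1$; hence (otherwise $\sum_{k\ge1}2^{-k}m_k(y)\le\sum_{k\ge1}(2k^2)^{-1}<1$) there is a smallest $k=k(y)\in\zz_+$ with $m_{k(y)}(y)>2^{k(y)}/(2k(y)^2)$. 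Setting $E_2^k:=\{y\in E_2:\ k(y)=k\}$ one gets a disjoint decomposition $E_2=\bigsqcup_{k\ge1}E_2^k$, and on $E_2^k$ one has $M_{2^{-k-1}\eta}(y)\ge m_k(y)\ge2^k/(2k^2)$. Applying the uniform-in-height inequality at level $\az=2^{-k-1}\eta$ and then the uniform upper type $p_1$ (which, raising the height from $2^{-k-1}\eta$ to $\eta$, costs a factor $2^{(k+1)p_1}$) gives
\[
\frac{2^k}{2k^2}\,\vz\lf(E_2^k,\,2^{-k-1}\eta\r)\le S
\qquad\Longrightarrow\qquad
\vz(E_2^k,\eta)\ls k^2\,2^{-k(1-p_1)}\,S.
\]
Summing over $k$ --- a convergent series exactly because $p_1<1$ --- gives $\vz(E_2,\eta)=\sum_{k\ge1}\vz(E_2^k,\eta)\ls S$, and combining with the bound for $E_1$ finishes the argument.

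The main obstacle is precisely the treatment of $E_2$: every crude estimate fails there --- the union bound $\vz(E_2,\eta)\le\sum_j\vz(\{|f_j|>\eta_j\},\eta)$ over a partition $\eta=\sum_j\eta_j$, and any direct appeal to Lemma \ref{ckj}, both lose a factor that is not summable over $j$ (already for $\vz(x,t)=t^p$ with $p<1$ and many equal $f_j$). The resolution is to stop organizing the estimate by the index $j$ and to organize it instead by the height $\az$ at which the functions stack up: the pigeonhole choice of $k(y)$ locates that height, the averaged-in-$j$ inequality $\int_\rn\vz(y,\az)M_\az(y)\,dy\le S$ is used there, and the uniform upper type $p_1<1$ turns the resulting factor $2^{-k}$ into a geometrically summable series; the only mildly delicate point is the polynomial factor $k^2$ generated by the pigeonhole, which is absorbed by the exponential decay.
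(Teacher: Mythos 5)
Your argument is correct; I checked the delicate points (the reduction to $\lambda=1$, the fact that any $p_1\in(I(\varphi),1)$ is an admissible uniform upper type exponent by monotonicity in the exponent, the Tonelli identity $\int_{\mathbb{R}^n}M_\alpha(y)\varphi(y,\alpha)\,dy=\sum_j\varphi(\{|f_j|>\alpha\},\alpha)\le S$, the pigeonhole constant $\sum_{k\ge1}(2k^2)^{-1}=\pi^2/12<1$, and the convergence of $\sum_k k^2 2^{-k(1-p_1)}$), and they all hold. Note, however, that the paper itself gives no proof of Lemma \ref{dj}: it is quoted verbatim from \cite[Lemma 7.13]{bckyy13}, so the only comparison available is with the argument in that source, which follows the classical Stein--Taibleson--Weiss superposition scheme: truncate each $f_j$ at height comparable to $\eta$, handle the ``large'' parts by the union bound, and handle the sum of the truncated parts by Chebyshev's inequality with respect to the measure $\varphi(\cdot,\eta)\,dx$ together with the layer-cake formula, the uniform upper type $p_1<1$ being used to make $\int_0^\eta(\eta/s)^{p_1}\,ds\lesssim\eta$ finite. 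Your route replaces the truncation--Chebyshev step by a dyadic pigeonhole in the height variable and a multiplicity count $M_\alpha$, using the hypothesis only through the averaged inequality above; this is genuinely different in mechanism (counting how many $f_j$ stack up at a given dyadic level rather than integrating the truncations), though both proofs exploit $I(\varphi)<1$ in the same way, namely to sum a geometric-type series in the height parameter. Your version is self-contained and arguably more transparent about where the hypothesis enters; the classical version is shorter and avoids the polynomial factor $k^2$ bookkeeping. Either way, the statement is established with a constant depending only on $\varphi$, as required.
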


By an argument similar to that used in the proof of \cite[Lemma 4.3]{k14}, we easily obtain the following lemma, the details being omitted.
\begin{lemma}\label{fs1}
Let $\vz$ be a growth function as in Definition \ref{d2.3}.
For a given positive constant $\tilde{C}$,
there exists a positive constant $C$ such that, for any $\lz\in(0,\,\fz)$,
$$\sup_{\alpha\in(0,\,\fz)}\vz\lf(\{|f|>\alpha\},\,\frac{\alpha}{\lambda}\r)\le\tilde{C} \ implies \ that \ \|f\|_{WL^\vz}\le C \lambda.$$
\end{lemma}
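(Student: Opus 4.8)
The plan is to mimic the structure of the proof of Lemma \ref{ky4.3} (which is \cite[Lemma 4.3(i)]{k14}) but in the weak setting, using the homogeneity-type estimates \eqref{max} and \eqref{min} that were established in the proof of Lemma \ref{jxsqj}. First I would fix $f$ and $\lz\in(0,\,\fz)$ with $\sup_{\az\in(0,\,\fz)}\vz(\{|f|>\az\},\,\az/\lz)\le\tilde C$, and may clearly assume $f\not\equiv{\bf 0}$, so that $\|f\|_{WL^\vz}\in(0,\,\fz)$ by Theorem \ref{wlwbx} (or directly from the definition). Set $\mu:=\|f\|_{WL^\vz}$. By Lemma \ref{jxsqj}(i) we have the normalization $\sup_{t\in(0,\,\fz)}\vz(\{|f|>t\},\,t/\mu)=1$.

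The key step is to compare the hypothesis (a bound with $\lz$) against this normalization (a bound with $\mu$). For any $\az\in(0,\,\fz)$ write $\az/\mu=(\lz/\mu)(\az/\lz)$ and apply \eqref{min} with $s:=\lz/\mu$ pointwise in $x$, then integrate over $\{|f|>\az\}$, to get
\begin{align*}
\vz\lf(\{|f|>\az\},\,\frac{\az}{\mu}\r)\gtrsim\min\lf\{\frac{\lz}{\mu},\lf(\frac{\lz}{\mu}\r)^p\r\}\vz\lf(\{|f|>\az\},\,\frac{\az}{\lz}\r).
\end{align*}
Taking the supremum over $\az\in(0,\,\fz)$ on both sides and invoking Lemma \ref{jxsqj}(i) on the left and the hypothesis is not quite what I want (the inequality points the wrong way after taking sup on a lower bound); instead I would use \eqref{max} with $s:=\mu/\lz$ to bound $\vz(\{|f|>\az\},\,\az/\mu)=\vz(\{|f|>\az\},\,(\mu/\lz)(\az/\lz))\lesssim\max\{\mu/\lz,(\mu/\lz)^p\}\vz(\{|f|>\az\},\,\az/\lz)$, take $\sup_\az$, and conclude from Lemma \ref{jxsqj}(i) and the hypothesis that $1\lesssim\max\{\mu/\lz,(\mu/\lz)^p\}\tilde C$. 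This forces $\max\{\mu/\lz,(\mu/\lz)^p\}\gtrsim\tilde C^{-1}$, i.e. $\mu/\lz$ is bounded below by a positive constant depending only on $\vz$ and $\tilde C$; but that is the reverse of what we need, so in fact I should run the argument with the roles of $\mu$ and $\lz$ swapped: apply \eqref{max} with $s:=\lz/\mu$ to write $\vz(\{|f|>\az\},\,\az/\lz)\lesssim\max\{\lz/\mu,(\lz/\mu)^p\}\vz(\{|f|>\az\},\,\az/\mu)$, take $\sup_\az$, use Lemma \ref{jxsqj}(i) to bound the right side by $\max\{\lz/\mu,(\lz/\mu)^p\}$, compare with the hypothesis — this still gives a bound on $\lz$ in terms of $\mu$, not the converse.

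The correct route, which I would actually write out, is to normalize in the hypothesis rather than in the definition: by the (quasi-)homogeneity of the quasi-norm $\|cf\|_{WL^\vz}=|c|\,\|f\|_{WL^\vz}$, it suffices to show that $\sup_{\az}\vz(\{|f|>\az\},\,\az/\lz)\le\tilde C$ implies $\|f\|_{WL^\vz}\le C\lz$ for the \emph{specific} value $\lz=1$, after replacing $f$ by $f/\lz$; so assume $\lz=1$ and $\sup_\az\vz(\{|f|>\az\},\,\az)\le\tilde C$. Now I must produce $C$ with $\sup_\az\vz(\{|f|>\az\},\,\az/C)\le1$. Pick $C\ge1$ large (to be fixed); then $\az/C=(1/C)\az$ with $1/C\in(0,\,1]$, so by the uniformly lower type $p$ property directly (no need for \eqref{min}), $\vz(x,\,\az/C)\le C_\vz C^{-p}\vz(x,\,\az)$ for every $x$, whence $\vz(\{|f|>\az\},\,\az/C)\le C_\vz C^{-p}\vz(\{|f|>\az\},\,\az)\le C_\vz C^{-p}\tilde C$. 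Choosing $C:=\max\{1,(C_\vz\tilde C)^{1/p}\}$ makes the right side $\le1$ for all $\az$, hence $\|f\|_{WL^\vz}\le C\lz$ by the definition of the quasi-norm. This $C$ depends only on $\vz$ (through $C_\vz$ and $p$) and on $\tilde C$, as required, and the case $f\equiv{\bf 0}$ is trivial.

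The main (and really only) obstacle is bookkeeping: one must be careful that the constant $C$ produced is genuinely independent of $f$ and $\lz$, which is why the scaling reduction to $\lz=1$ is the clean way to proceed — it isolates the single use of the uniformly lower type $p$ inequality and makes the choice of $C$ transparent. No deep input beyond the uniformly lower type $p$ property of $\vz$ and the definition of $\|\cdot\|_{WL^\vz}$ is needed; this is exactly the weak-type analogue of \cite[Lemma 4.3]{k14}, as the statement already indicates.
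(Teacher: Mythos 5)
Your final argument (the homogeneity reduction to $\lambda=1$ followed by a single application of the uniformly lower type $p$ property, choosing $C:=\max\{1,(C_\varphi\tilde C)^{1/p}\}$ so that $\varphi(\{|f|>\alpha\},\,\alpha/C)\le C_\varphi C^{-p}\tilde C\le 1$ for every $\alpha$) is correct and is precisely the weak-type adaptation of \cite[Lemma 4.3]{k14} that the paper invokes while omitting the details. The exploratory detour through Lemma \ref{jxsqj}(i) and the estimates \eqref{max}--\eqref{min} in your middle paragraph is unnecessary and should simply be deleted, but it does not affect the validity of the proof you actually give.
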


The following theorem gives a boundedness criterion of operators from ${H^\vz}$ to $WL^\vz$.

\begin{theorem}\label{yt2}
Let $\vz$ be a growth function as in Definition \ref{d2.3} satisfying
$I(\vz)\in(0,\,1)$, where $I(\vz)$ is as in \eqref{e2.1.1}. Suppose that a linear or a positive sublinear operator $T$ is bounded on $L^2$.
If there exists a positive constant $C$ such that,
for any $\lz\in(0,\,\fz)$ and multiple of a $(\vz,\,q,\,s)$-atom $b$ associated with some ball $B\subset\rn$,
\begin{align}\label{z11}
 \sup_{\alpha\in(0,\,\fz)}\vz\lf(\lf\{|T(b)|>\alpha\r\},\,\frac{\alpha}{\lambda}\r)
 \le C\vz\lf(B,\,\frac{{\|b\|_{L^q_\vz(B)}}}{\lambda}\r),
\end{align}
then $T$ extends uniquely to a bounded operator from ${H^\vz}$ to $WL^\vz$.
\end{theorem}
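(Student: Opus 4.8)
The plan is to follow the same architecture as the proof of Theorem \ref{yt}, replacing the strong-type ingredients by their weak-type analogues. First I would assume $f\in H^\vz\cap L^2$ and invoke the Calder\'on reproducing formula to write $f=\lim_{k\to\fz}\sum_{j=1}^k b_j=:\lim_{k\to\fz}f_k$ both in $\cs'$ and in $L^2$, where each $b_j$ is a multiple of a $(\vz,\,q,\,s)$-atom supported in a ball $x_j+B_{r_j}$. Since $T$ is bounded on $L^2$, Lemma \ref{szxz} gives $\|T(f)-T(f_k)\|_{L^2}\le\|T(f-f_k)\|_{L^2}\ls\|f-f_k\|_{L^2}\to0$, whence $T(f)\le\sum_{j=1}^\fz T(b_j)$ almost everywhere. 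Setting $\lz:=\Lz_q(\{b_j\}_j)$, the pointwise bound $|T(f)|\le\sum_j|T(b_j)|$ together with Lemma \ref{dj} (applicable because $I(\vz)\in(0,\,1)$) and the hypothesis \eqref{z11} yields
\begin{align*}
\sup_{\alpha\in(0,\,\fz)}\vz\lf(\{|T(f)|>\alpha\},\,\frac{\alpha}{\lz}\r)
&\ls\sum_{j=1}^\fz\sup_{\alpha\in(0,\,\fz)}\vz\lf(\{|T(b_j)|>\alpha\},\,\frac{\alpha}{\lz}\r)\\
&\ls\sum_{j=1}^\fz\vz\lf(x_j+B_{r_j},\,\frac{\|b_j\|_{L^q_\vz(x_j+B_{r_j})}}{\lz}\r)\ls1.
\end{align*}
Then Lemma \ref{fs1} gives $\|T(f)\|_{WL^\vz}\ls\Lz_q(\{b_j\}_j)$, and taking the infimum over all admissible decompositions and applying Lemma \ref{yzfj} produces $\|T(f)\|_{WL^\vz}\ls\|f\|_{H^{\vz,\,q,\,s}_{\rm at}}\sim\|f\|_{H^\vz}$ for every $f\in H^\vz\cap L^2$.

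The second stage is the density/extension argument, carried out exactly as in Theorem \ref{yt} but with $L^\vz$ replaced by $WL^\vz$. Given $f\in H^\vz$, Lemma \ref{cm} supplies $\{f_j\}_{j\in\zz_+}\subset H^\vz\cap L^2$ with $f_j\to f$ in $H^\vz$; this is a Cauchy sequence in $H^\vz$, so by Lemma \ref{szxz} and the bound just proved $\|T(f_j)-T(f_k)\|_{WL^\vz}\le\|T(f_j-f_k)\|_{WL^\vz}\ls\|f_j-f_k\|_{H^\vz}\to0$, i.e.\ $\{T(f_j)\}_j$ is Cauchy in $WL^\vz$. Here I would use Theorem \ref{wlwbx} (completeness of $WL^\vz$) to obtain $g\in WL^\vz$ with $T(f_j)\to g$, and define $T(f):=g$. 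Well-definedness follows as before: for another approximating sequence $\{f_j'\}_j$, $\|T(f_j')-T(f)\|_{WL^\vz}\ls\|f_j'-f_j\|_{H^\vz}+\|T(f_j)-g\|_{WL^\vz}\ls\|f_j'-f\|_{H^\vz}+\|f-f_j\|_{H^\vz}+\|T(f_j)-g\|_{WL^\vz}\to0$. Finally, since $WL^\vz$ is a quasi-Banach space, Lemma \ref{nfbds} applies with $\mathcal B=WL^\vz$, giving $\|T(f)\|_{WL^\vz}=\lim_{j\to\fz}\|T(f_j)\|_{WL^\vz}\ls\lim_{j\to\fz}\|f_j\|_{H^\vz}\sim\|f\|_{H^\vz}$.

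The main obstacle is the summation step: unlike the strong-type case where Lemma \ref{ckj} turns $\vz(x,\sum_j t_j)\le C\sum_j\vz(x,t_j)$ pointwise and one integrates, here one cannot simply sum the quasi-norms $\|T(b_j)\|_{WL^\vz}$ because $WL^\vz$ is only a quasi-normed space and the naive triangle inequality fails. This is precisely what Lemma \ref{dj}, the weak-type superposition principle, is designed to circumvent, and its hypothesis $I(\vz)\in(0,\,1)$ is exactly why that condition is imposed in the statement of Theorem \ref{yt2}; checking that its quantitative hypothesis $\sum_j\sup_\alpha\vz(\{|b_j|>\alpha\},\alpha/\lz)<\fz$ is met with $\lz=\Lz_q(\{b_j\}_j)$ is where the atomic normalization and \eqref{z11} get used. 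A secondary technical point worth a line of care is that $T(f)$ was only defined on $H^\vz\cap L^2$ a priori, so one must verify (via the $L^2$-convergence in the Calder\'on formula, as above) that on $H^\vz\cap L^2$ the new extension agrees with the original $T$, so that the notation $T(f)$ is unambiguous; this is routine and I would note it briefly.

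\begin{proof}
We first assume that $f\in H^\vz\cap L^2$.
By the Calder\'on reproducing formula (see also \cite[Theorem 2.14]{lfy15}),
there exists a sequence of multiples of $(\vz,\,q,\,s)$-atoms $\{b_j\}_{j\in\zz_+}$
associated with balls $\{x_j+B_{r_j}\}_{j\in\zz_+}$ such that
$$f=\lim_{k\rightarrow\fz}\sum_{j=1}^k b_j=:\lim_{k\rightarrow\fz}f_k \ {\rm{in}} \ \cs' \ {\rm{and \ also \ in}} \ L^2.$$
Since $T$ is bounded on $L^2$, by Lemma \ref{szxz},
$$\lim_{k\rightarrow\fz}\lf\|T(f)-T(f_k)\r\|_{L^2}\le\lim_{k\rightarrow\fz}\lf\|T(f-f_k)\r\|_{L^2}\ls\lim_{k\rightarrow\fz}\lf\|f-f_k\r\|_{L^2}\sim0,$$
which implies that
$$T(f)=\lim_{k\rightarrow\fz}T(f_k)\le\lim_{k\rightarrow\fz}\sum_{j=1}^kT(b_j)=\sum_{j=1}^\fz T(b_j) \ {\rm{almost \ everywhere}}.$$
Let $\lz:=\Lz_q(\{b_j\}_j)$. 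From \eqref{z11} and the definition of $\Lz_q(\{b_j\}_j)$, we deduce that
$$\sum_{j\in\zz_+}\sup_{\az\in(0,\,\fz)}\vz\lf(\{|T(b_j)|>\az\},\,\frac{\az}{\lz}\r)
\ls\sum_{j\in\zz_+}\vz\lf(x_j+B_{r_j},\,\frac{\|b_j\|_{L^q_\vz(x_j+B_{r_j})}}{\lz}\r)\le1<\fz.$$
Thus, by the pointwise estimate $|T(f)|\le\sum_{j\in\zz_+}|T(b_j)|$, Lemma \ref{dj} (applicable since $I(\vz)\in(0,\,1)$) and \eqref{z11} again, we obtain
\begin{align*}
\sup_{\alpha\in(0,\,\fz)}\vz\lf(\{|T(f)|>\alpha\},\,\frac{\alpha}{\lz}\r)
&\le\sup_{\alpha\in(0,\,\fz)}\vz\lf(\lf\{\sum_{j\in\zz_+}|T(b_j)|>\alpha\r\},\,\frac{\alpha}{\lz}\r)\\
&\ls\sum_{j\in\zz_+}\sup_{\alpha\in(0,\,\fz)}\vz\lf(\{|T(b_j)|>\alpha\},\,\frac{\alpha}{\lz}\r)\\
&\ls\sum_{j\in\zz_+}\vz\lf(x_j+B_{r_j},\,\frac{\|b_j\|_{L^q_\vz(x_j+B_{r_j})}}{\lz}\r)\ls1,
\end{align*}
which, together with Lemma \ref{fs1}, implies that $\|T(f)\|_{WL^\vz}\ls\Lz_q(\{b_j\}_j)$.
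Taking the infimum over all admissible decompositions of $f$ and using Lemma \ref{yzfj}, we obtain that, for any $f\in H^\vz\cap L^2$,
\begin{align}\label{z15}
\|T(f)\|_{WL^\vz}\ls\|f\|_{H^{\vz,\,q,\,s}_{{\rm{at}}}}\sim\|f\|_{H^\vz}.
\end{align}

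Next, suppose $f\in H^\vz$. By Lemma \ref{cm}, there exists a sequence
$\{f_j\}_{j\in\zz_+}\subset H^\vz\cap L^2$ such that $f_j\rightarrow f$ as $j\rightarrow\fz$ in $H^\vz$.
Therefore, $\{f_j\}_{j\in\zz_+}$ is a Cauchy sequence in $H^\vz$.
From this, Lemma \ref{szxz} and \eqref{z15}, we deduce that, for any $j,\,k\in\zz_+$,
$$\lf\|T(f_j)-T(f_k)\r\|_{WL^\vz}\le\lf\|T(f_j-f_k)\r\|_{WL^\vz}\ls\lf\|f_j-f_k\r\|_{H^\vz},$$
so $\{T(f_j)\}_{j\in\zz_+}$ is a Cauchy sequence in $WL^\vz$.
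Applying Theorem \ref{wlwbx}, we conclude that there exists some $g\in WL^\vz$
such that $T(f_j)\rightarrow g$ as $j\rightarrow\fz$ in $WL^\vz$.
Consequently, define $T(f):=g$. We claim that $T(f)$ is well defined. Indeed,
for any other sequence $\{f'_j\}_{j\in\zz_+}\subset H^\vz\cap L^2$ satisfying
$f'_j\rightarrow f$ as $j\rightarrow\fz$ in $H^\vz$, by Lemma \ref{szxz} and \eqref{z15},
\begin{align*}
\lf\|T(f'_j)-T(f)\r\|_{WL^\vz}
&\ls\lf\|T(f'_j)-T(f_j)\r\|_{WL^\vz}+\lf\|T(f_j)-g\r\|_{WL^\vz}\\
&\ls\lf\|f'_j-f_j\r\|_{H^\vz}+\lf\|T(f_j)-g\r\|_{WL^\vz}\\
&\ls\lf\|f'_j-f\r\|_{H^\vz}+\lf\|f-f_j\r\|_{H^\vz}+\lf\|T(f_j)-g\r\|_{WL^\vz}\rightarrow0 \ {\rm{as}} \ j\rightarrow\fz,
\end{align*}
which is wished. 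From this, Lemma \ref{nfbds} and \eqref{z15}, it follows that, for any $f\in H^\vz$,
$$\|T(f)\|_{WL^\vz}=\|g\|_{WL^\vz}=\lim_{j\rightarrow\fz}\|T(f_j)\|_{WL^\vz}\ls\lim_{j\rightarrow\fz}\|f_j\|_{H^\vz}\thicksim\|f\|_{H^\vz}.$$
This finishes the proof of Theorem \ref{yt2}.
\end{proof}
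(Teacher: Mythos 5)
Your proposal is correct and follows essentially the same route as the paper's proof: the atomic decomposition in $H^\vz\cap L^2$, the pointwise bound $|T(f)|\le\sum_j|T(b_j)|$ via $L^2$-boundedness, the superposition Lemma \ref{dj} combined with \eqref{z11} and Lemma \ref{fs1}, and then the density/extension argument using Theorem \ref{wlwbx} and Lemma \ref{nfbds}. Your explicit verification of the summability hypothesis of Lemma \ref{dj} is a small (and welcome) elaboration of a step the paper leaves implicit.
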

\begin{proof}
Since the proof of Theorem \ref{yt2} is similar to that of Theorem \ref{yt}, we use the same notation as in the proof of Theorem \ref{yt}.
Here we just give out the necessary modifications.

By \eqref{z14}, Lemma \ref{dj} and \eqref{z11} with taking $\lambda=\Lz_q(\{b_j\}_j)$, we obtain that, for any $\alpha\in(0,\,\fz)$,
\begin{align*}
\vz\lf(\{|T(f)|>\alpha\},\,\frac{\alpha}{\Lz_q(\{b_j\}_j)}\r)
&\le \vz\lf(\lf\{\sum_{j=1}^\fz |T\lf(b_j\r)|>\alpha\r\},\,\frac{\alpha}{\Lz_q(\{b_j\}_j)}\r) \\
&\ls \sum_{j=1}^\fz\sup_{\alpha\in(0,\,\fz)}\vz\lf(\lf\{|T\lf(b_j\r)|>\alpha\r\},\,\frac{\alpha}{\Lz_q(\{b_j\}_j)}\r)  \\
&\ls \sum_{j=1}^{\fz}\vz\lf(x_j+B_{r_j},\,
\frac{\|b_j\|_{L^q_\vz(x_j+B_{r_j})}}{\Lz_q(\{b_j\}_j)}\r)\ls1,
\end{align*}
which, together with Lemma \ref{fs1}, implies that
$$\|T(f)\|_{WL^\vz}\ls \Lz_q(\{b_j\}_j).$$
Taking infimum for all admissible decompositions of $f$ as above and using Lemma \ref{yzfj}, we obtain that, for any $f\in H^\vz\cap L^2$,
\begin{align}\label{z13}
\|T(f)\|_{WL^\vz} \ls \|f\|_{H^{\vz,\,q,\,s}_{{\rm{at}}}} \sim \|f\|_{H^\vz}.
\end{align}

Next, suppose $f\in{H^\vz}$. By Lemma \ref{cm}, we know that there exists a sequence of
$\{f_j\}_{j\in\zz_+}\subset H^\vz\cap L^2$ such that
$f_j\rightarrow f$ as $j\rightarrow\fz$ in ${H^\vz}$.
Therefore, $\{f_j\}_{j\in\zz_+}$ is a Cauchy sequence in $H^\vz$.
From this, Lemma \ref{szxz} and \eqref{z13}, we deduce that, for any $j,\,k\in\zz_+$,
$$\lf\|T(f_j)-T(f_k)\r\|_{{WL}^\vz} \leq \lf\|T(f_j-f_k)\r\|_{{WL}^\vz}\ls \lf\|f_j-f_k\r\|_{H^\vz}.$$
Thus, by this, we know that
$\{T(f_j)\}_{j\in\zz_+}$ is a Cauchy sequence in ${WL}^\vz$.
Applying Theorem \ref{wlwbx}, we conclude that there exists some $g\in {WL}^\vz$
such that $T(f_j)\rightarrow g$ as $j\rightarrow\fz$ in ${WL}^\vz$.
Consequently, define $T(f):=g$. Below, we claim that $T(f)$ is well defined. Indeed,
for any other sequence $\{f'_j\}_{j\in\zz_+}\subset H^\vz\cap L^2$ satisfying
$f'_j\rightarrow f$ as $j\rightarrow\fz$ in ${H^\vz}$, by Lemma \ref{szxz} and \eqref{z13},
we have
\begin{align*}
\lf\|T(f'_j)-T(f)\r\|_{WL^\vz}
&\ls \lf\|T(f'_j)-T(f_j)\r\|_{WL^\vz}+\lf\|T(f_j)-g\r\|_{WL^\vz} \\
&\ls \lf\|f'_j-f_j\r\|_{H^\vz}+\lf\|T(f_j)-g\r\|_{WL^\vz} \\
&\ls \lf\|f'_j-f\r\|_{H^\vz}+\lf\|f-f_j\r\|_{H^\vz}+\lf\|T(f_j)-g\r\|_{WL^\vz}\rightarrow0 \ {\rm{as}} \ j\rightarrow\fz,
\end{align*}
which is wished. From this, Lemma \ref{nfbds} and \eqref{z13}, it follows that, for any $f\in{H^\vz}$,
$$\|T(f)\|_{{WL}^\vz}=\|g\|_{{WL}^\vz}=\lim_{j\rightarrow\fz}\|T(f_j)\|_{{WL}^\vz}
\ls \lim_{j\rightarrow\fz}\|f_j\|_{H^\vz}\sim\|f\|_{H^\vz}.$$
This finishes the proof of Theorem \ref{yt2}.
\end{proof}

\section{Boundedness of parametric Marcinkiewicz integrals \label{s4}}
In this section, we first recall the notion concerning the {\it{$L^q$-Dini type condition of order $\alpha$}} with $q\in[1,\,\fz]$ and $\az\in(0,\,1]$,
and then obtain the boundedness of $\mu^\rho_\boz$ from $H^\vz$ to $L^\vz$ or from $H^\vz$ to $WL^\varphi$.

Here and hereafter, we always assume that $\Omega$ is homogeneous of degree zero and satisfies \eqref{e1.1}.

Recall that, for any $q\in[1,\,\fz)$ and $\alpha\in(0,\,1]$, a function $\Omega\in L^q(S^{n-1})$ is said to satisfy the
{\it{$L^q$-Dini type condition of order $\alpha$}} (when $\alpha=0$, it is called the {\it{$L^q$-Dini condition}}),
if $$\int_0^1\frac{\omega_q(\delta)}{\delta^{1+\alpha}}\,d\delta<\fz,$$
where ${\omega_q(\delta)}$ is the integral modulus of continuity of order $q$ of $\Omega$
defined by setting, for any $\delta\in(0,\,1]$,
$${\omega_q(\delta)}:=\sup_{\|\gz\|<\delta}
\lf(\int_{S^{n-1}}|\Omega(\gz x')-\Omega(x')|^q\,d\sigma(x')\r)^{1/q}$$
and $\gz$ denotes a rotation on $S^{n-1}$ with $\|\gz\|:=\sup_{y'\in S^{n-1}}|\gz y'-y'|$.
For any $\alpha,\,\beta\in(0,\,1]$ with $\beta<\alpha$,
it is easy to see that if $\Omega$ satisfies the $L^q$-Dini type condition of order $\alpha$,
then it also satisfies the $L^q$-Dini type condition of order $\beta$.
We thus denote by ${\rm{Din}}^q_\alpha(S^{n-1})$ the class of all functions which satisfy the
$L^q$-Dini type conditions of all orders $\beta<\alpha$. For any $\alpha\in(0,\,1]$, we define
$${\rm{Din}}^\fz_\alpha(S^{n-1}):=\bigcap_{q\ge1}{\rm{Din}}^q_\alpha(S^{n-1}).$$
See \cite[pp.\,89-90]{ll07} for more properties of ${\rm{Din}}^q_\alpha(S^{n-1})$
with $q\in[1,\,\fz]$ and $\alpha\in(0,\,1]$.

The main results of this section are as follows.

\begin{theorem}\label{dingli.1}
Let $\rho\in(0,\,\infty)$, $\alpha\in(0,\,1]$, $\beta:=\min\{\alpha,\,1/2\}$ and $\vz$ be a growth function as in Definition \ref{d2.3} with $p\in({n}/{(n+\beta)},\,1)$ therein.
%satisfying the uniformly locally dominated convergence condition.
Suppose that $\Omega\in L^q(S^{n-1})\cap {\rm{Din}}^1_{\alpha}(S^{n-1})$ with $q\in(1,\,\fz]$.
If $q$ and $\varphi$ satisfy one of the following conditions:
\begin{enumerate}
\item[\rm{(i)}] $q\in(1,\,1/p]$ and $\vz^{q'}\in \mathbb{A}_{\frac{p\beta}{n(1-p)}}$;
\item[\rm{(ii)}] $q\in(1/p,\,\fz]$ and $\vz^{{1}/{(1-p)}}\in \mathbb{A}_{\frac{p\beta}{n(1-p)}}$,
\end{enumerate}
then there exists a positive constant $C$ independent of $f$ such that
$$\lf\|\mu_{\Omega}^{\rho}(f)\r\|_{L^\vz} \leq C\|f\|_{H^\vz}.$$
\end{theorem}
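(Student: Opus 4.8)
The plan is to verify the hypotheses of Theorem \ref{yt}, that is, to check that $\mu^\rho_\Omega$ is bounded on $L^2$ and that it satisfies the single-atom inequality \eqref{a1}. The $L^2$-boundedness is classical: since $\Omega\in L^q(S^{n-1})\subset L^1(S^{n-1})$ satisfies the cancellation condition \eqref{e1.1} and also lies in ${\rm{Din}}^1_\alpha(S^{n-1})$, the operator $\mu^\rho_\Omega$ is known to be bounded on $L^2$ (this is essentially the result of H\"ormander \cite{h60} together with the Dini-type hypothesis, or follows from Theorem A with $p=2$ and $\omega\equiv 1$). So the work is entirely in establishing \eqref{a1}.

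Fix a multiple of a $(\vz,q,s)$-atom $b$ supported in a ball $B:=x_0+B_{r}$. The standard decomposition is
\begin{align*}
\int_\rn \vz\lf(x,\,\frac{|\mu^\rho_\Omega(b)(x)|}{\lz}\r)\,dx
=\int_{2\sqrt n B} +\int_{(2\sqrt n B)^\complement}=:\mathrm{I}+\mathrm{II}.
\end{align*}
For $\mathrm{I}$, I would use the $L^2$-boundedness of $\mu^\rho_\Omega$, H\"older's inequality with exponent $q$ (splitting into the two cases $q\in(1,1/p]$ and $q\in(1/p,\infty]$ according to whether one tests against $\vz^{q'}$ or $\vz^{1/(1-p)}$), the uniform $\mathbb{A}_\infty$-condition on $\vz$, the uniformly upper type $1$ property, and the size estimate $\|b\|_{L^q_\vz(B)}\le\|\chi_B\|_{L^\vz}^{-1}$ together with the standard comparison $\|\chi_B\|_{L^\vz}^{-1}\sim\vz(B,\,\cdot)^{-1/(\text{something})}$; this reduces $\mathrm I$ to a constant times $\vz(B,\|b\|_{L^q_\vz(B)}/\lz)$. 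For $\mathrm{II}$, the key is a \emph{pointwise} estimate for $\mu^\rho_\Omega(b)(x)$ valid for $x\notin 2\sqrt n B$: using the vanishing moments of $b$ (here only the moment of order $0$ is needed since $p>n/(n+\beta)$ forces $s=m(\vz)=0$) together with the $L^1$-Dini type condition of order $\alpha$ on $\Omega$, one obtains a bound of the form
$$\mu^\rho_\Omega(b)(x)\ls \|b\|_{L^1} \,\frac{r^{\beta}}{|x-x_0|^{n+\beta}}$$
(up to the usual logarithmic/summation refinement coming from the Dini integral), which is exactly the point where the subtle Lemma \ref{lemma.1} advertised in the introduction enters. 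Summing this estimate over the annuli $2^{k+1}\sqrt n B\setminus 2^k\sqrt n B$, $k\ge 1$, and invoking the $\mathbb{A}_{p\beta/(n(1-p))}$-condition on the appropriate power of $\vz$ (which controls $\vz(2^k B,\cdot)/\vz(B,\cdot)$ by a power $2^{kn p\beta/(n(1-p))}$ of $2^k$ that is beaten by the gain $2^{-k(n+\beta)}$ raised to the relevant type exponent) yields $\mathrm{II}\ls\vz(B,\|b\|_{L^q_\vz(B)}/\lz)$ as well. Adding $\mathrm I$ and $\mathrm{II}$ gives \eqref{a1}, and Theorem \ref{yt} finishes the proof.

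The main obstacle, as the authors themselves flag, is the term $\mathrm{II}$: because the space variable $x$ and the ``time'' variable $t$ in $\vz(x,t)$ are intertwined, one cannot simply invoke a weighted estimate for $\mu^\rho_\Omega$ on a fixed $\mathbb A_p$ weight as in Lin et al.\ \cite{ll07}; instead one must carry the $t$-variable along through the whole annular summation, which forces a careful pointwise bound for $\mu^\rho_\Omega(b)$ (Lemma \ref{lemma.1}) whose decay rate $r^\beta/|x-x_0|^{n+\beta}$ with $\beta=\min\{\alpha,1/2\}$ must be matched precisely against the lower-type exponent $p>n/(n+\beta)$ and the $\mathbb A_q$-index $p\beta/(n(1-p))$ so that the geometric series over $k$ converges. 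Balancing these three exponents — the Dini order $\alpha$, the lower type $p$, and the Muckenhoupt index — is the delicate heart of the argument; everything else is a routine adaptation of the Musielak-Orlicz atomic machinery recorded in Section \ref{s3}.
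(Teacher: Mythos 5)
Your top-level strategy (verify the hypotheses of Theorem \ref{yt}: $L^2$-boundedness via Theorem A with $\omega\equiv1$, plus the single-atom estimate \eqref{a1}) coincides with the paper's, and the local term is handled in essentially the right way. The genuine gap is in the off-support term $\mathrm{II}$, where you rely on the pointwise bound $\mu^\rho_\Omega(b)(x)\ls\|b\|_{L^1}\,r^{\beta}/|x-x_0|^{n+\beta}$ and attribute it to Lemma \ref{lemma.1}. Neither is tenable. A pointwise decay estimate of this kind needs pointwise control of $|\Omega((x-y)')-\Omega(x')|$, i.e.\ $\Omega\in{\rm Lip}_\alpha(S^{n-1})$; it is exactly the estimate \eqref{brdtgj} used in Lemma \ref{m11} for the weak-type Theorem \ref{dingli.4}, and it is not available under the mere $L^1$-Dini type condition assumed in Theorem \ref{dingli.1}, which only controls rotational averages of $\Omega$ and hence only yields annulus-integral bounds (Lemma \ref{L3.6} and Lemma \ref{lin}(i): $\int_{B_{2R}\setminus B_R}\mu^\rho_\Omega(b)(x)\,dx\ls\|b\|_{L^\fz}R^n(r/R)^{n+\beta}$). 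Moreover, Lemma \ref{lemma.1} is not a decay estimate at all: it is a uniform bound, roughly $\mu^\rho_\Omega(b)(x)\ls\|b\|_{L^\fz}$ on far annuli with constant eventually less than $1$, whose only role in the paper is to ensure that the argument of $\vz$ is at most $\|b\|_{L^\fz}$ so that the uniformly lower type $p$ inequality may legitimately be applied.

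There is also a quantitative failure in your exponent bookkeeping even if one grants the pointwise bound: applying lower type $p$ and Lemma \ref{bcd} with $\vz\in\mathbb{A}_{\frac{p\beta}{n(1-p)}}$ (Lemma \ref{quan}(i)) produces on the $k$-th annulus a factor $2^{-k(n+\beta)p}\cdot 2^{k\frac{p\beta}{1-p}}$, and $(n+\beta)p>\frac{p\beta}{1-p}$ is equivalent to $p<n/(n+\beta)$ --- the opposite of the hypothesis --- so the geometric series you describe need not converge; your balance would instead require the stronger condition $\vz\in\mathbb{A}_{p(1+\beta/n)}$ of Corollary \ref{tuilun.1}. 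The paper's Lemma \ref{lemma.2} resolves this by applying H\"older's inequality with exponent $1/(1-p)$ to separate $[\vz(x,\cdot)]^{1/(1-p)}$ from $[\mu^\rho_\Omega(b)]^{p}$, controlling the weight factor through $\vz^{1/(1-p)}\in\mathbb{A}_d$ with $d<\frac{p\tilde\beta}{n(1-p)}$, Lemma \ref{bcd} and the reverse H\"older property $\vz\in\mathbb{RH}_{1/(1-p)}$ (Lemma \ref{fh}), and controlling the operator factor through the unweighted annulus estimate of Lemma \ref{lin}(i) for a slightly smaller Dini order $\tilde\alpha<\alpha$; the resulting exponent $nd(1-p)-p\tilde\beta$ is negative precisely under the stated hypothesis. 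Your proposal omits this mechanism, so the key term is not proved as written. Two smaller points: the atomic estimate should be verified for multiples of $(\vz,\fz,s)$-atoms (the paper applies Theorem \ref{yt} with atom parameter $\fz$; the $q$ in the theorem refers to $\Omega\in L^q(S^{n-1})$), and the claim that $p>n/(n+\beta)$ forces $s=m(\vz)=0$ is unjustified, though harmless since only the zero-order moment of $b$ is used.
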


\begin{theorem}\label{dingli.2}
Let $\rho\in(0,\,\infty)$, $\alpha\in(0,\,1]$, $\beta:=\min\{\alpha,\,1/2\}$ and $\vz$ be a growth function as in Definition \ref{d2.3} with $p\in({n}/{(n+\beta)},\,1]$ therein.
%satisfying the uniformly locally dominated convergence condition.
Suppose that $\Omega\in {\rm{Din}}^q_{\alpha}(S^{n-1})$ with $q\in(1,\,\fz)$.
If $\vz^{q'}\in\aa_{(p+\frac{p\beta}{n}-\frac1q)q'}$,
then there exists a positive constant $C$ independent of $f$ such that
$$\lf\|\mu_{\Omega}^{\rho}(f)\r\|_{L^\vz} \leq C\|f\|_{H^\vz}.$$
\end{theorem}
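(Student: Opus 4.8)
\textbf{Proof plan for Theorem \ref{dingli.2}.}
The plan is to apply the boundedness criterion provided by Theorem \ref{yt}. Since $\mu^\rho_\Omega$ is a positive sublinear operator (this follows from Minkowski's inequality in $L^2((0,\,\fz),\,dt/t^{2\rho+1})$ together with the obvious pointwise bounds on $F^\rho_{\Omega,\,t}$), and since $\Omega\in {\rm{Din}}^q_\alpha(S^{n-1})$ with $q\in(1,\,\fz)$ implies in particular $\Omega\in L^q(S^{n-1})$, the $L^2$-boundedness of $\mu^\rho_\Omega$ is classical (it also follows from Theorem A upon choosing the weight to be a constant, noting $q'<\fz$ so that the required $A_p$ membership is available). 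Hence it remains only to verify the atomic estimate \eqref{a1}: for every $\lz\in(0,\,\fz)$ and every multiple $b$ of a $(\vz,\,q',\,s)$-atom supported in a ball $B=x_0+B_{r_0}$, one needs
$$\int_{\rn}\vz\lf(x,\,\frac{|\mu^\rho_\Omega(b)(x)|}{\lz}\r)\,dx \ls \vz\lf(B,\,\frac{\|b\|_{L^{q'}_\vz(B)}}{\lz}\r).$$

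First I would split the integral over $\rn$ into the part over the dilated ball $2\sqrt n B$ (the "local" part) and the part over its complement (the "far" part). On $2\sqrt n B$ one uses the $L^2$-boundedness of $\mu^\rho_\Omega$ together with the $L^{q'}_\vz(B)$ size condition on $b$, a Hölder argument against the weight, and the uniform $\aa_{q'}$-type hypothesis $\vz^{q'}\in\aa_{(p+p\beta/n-1/q)q'}$ to compare $\vz(2\sqrt n B,\,t)$ with $\vz(B,\,t)$ and to absorb the $L^{q'}$-average of $b$; the uniform upper and lower type properties of $\vz$ then convert this into the desired right-hand side. The far part is where the vanishing-moment and smoothness hypotheses on $\Omega$ enter. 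Here I would establish a pointwise estimate of the form
$$\mu^\rho_\Omega(b)(x)\ls \|b\|_{L^{q'}_\vz(B)}\,\vz\big(B,\,\cdot\big)\text{-type normalization}\times \frac{r_0^{\,n+\beta}}{|x-x_0|^{n+\beta}}\quad\text{for }x\notin 2\sqrt n B,$$
for an appropriate $\beta:=\min\{\alpha,\,1/2\}$: the moment condition $\int_\rn b=0$ is used to subtract a constant from the kernel $\Omega(x-y)/|x-y|^{n-\rho}$ inside $F^\rho_{\Omega,\,t}$, and the resulting kernel difference, integrated against $dt/t^{2\rho+1}$, is controlled by the $L^q$-modulus of continuity $\omega_q(\delta)$ of $\Omega$ via the $L^q$-Dini type condition of order $\alpha$ — this is the analogue, in the Musielak--Orlicz setting, of the pointwise estimate in Lin et al. \cite{ll07}, and is precisely the subtle point alluded to in the introduction (cf. Lemma \ref{lemma.1}). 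With this pointwise bound in hand, one decomposes the far region into annuli $2^{k+1}\sqrt n B\setminus 2^k\sqrt n B$, $k\in\zz_+$, sums the geometric-type series $\sum_k 2^{-k\beta\cdot(\text{exponent})}$ after invoking the $\aa_{q'}$-condition once more to estimate $\vz(2^k\sqrt n B,\,\cdot)$ against $\vz(B,\,\cdot)$, and again uses the uniform lower/upper type of $\vz$ and Lemma \ref{ckj} to reassemble everything into $\vz(B,\,\|b\|_{L^{q'}_\vz(B)}/\lz)$.

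The main obstacle will be the far-part pointwise estimate: because the space variable $x$ and the time variable $t$ in $\vz(x,\,t)$ cannot be separated, one cannot merely quote the scalar argument of \cite{ll07}; instead one must carefully organize the $t$-integral in $\mu^\rho_\Omega$ so that the cancellation against $\int b=0$ and the Dini-modulus bound on $\Omega$ interact correctly, and track how the exponent $(p+p\beta/n-1/q)q'$ in the weight hypothesis is exactly what makes the annular sum converge. Once the pointwise estimate is secured, the remaining steps are routine manipulations with the defining properties of growth functions (Lemma \ref{qcytj}, Lemma \ref{ckj}) and the Muckenhoupt-type condition, and then Theorem \ref{yt} delivers the conclusion.
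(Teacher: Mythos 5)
There is a genuine gap, and it sits exactly where you predicted the ``main obstacle'' would be. Under the hypothesis $\Omega\in{\rm{Din}}^q_{\alpha}(S^{n-1})$ with $q\in(1,\fz)$, the pointwise far-field bound you propose, $\mu^\rho_\Omega(b)(x)\ls\|b\|\,r_0^{n+\beta}|x-x_0|^{-(n+\beta)}$ off the doubled ball, is simply not available: for a fixed $x$ outside the ball, the kernel only sees $\Omega$ on a small spherical cap around $(x-x_0)'$, and an $L^q$-Dini condition puts no pointwise control on $\Omega$ there. Such pointwise decay is what one gets when $\Omega\in{\rm{Lip}}_\alpha(S^{n-1})$ (this is exactly the route of Lemma \ref{m11} and Theorem \ref{dingli.4}, via \cite{w16}); in the Dini setting the only usable substitute is an \emph{annular integral} estimate. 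That is how the paper proceeds: Lemma \ref{L3.6} (a Kurtz--Wheeden type $L^q$ bound over annuli for the kernel difference) feeds into Lemma \ref{lin}(ii), which bounds $\int_{B_{2R}\setminus B_R}\mu^\rho_\Omega(b)(x)\,\vz(x,t)\,dx$ by $\|b\|_{L^\fz}[\vz^{q'}(B_{2R},t)]^{1/q'}R^{n/q}(r/R)^{n+\beta}$ uniformly in $t$; then on each annulus $E_j$ one uses the uniform lower type $p$ and H\"older in the form $\int_{E_j}[\mu^\rho_\Omega(b)]^p\vz\le(\int_{E_j}\mu^\rho_\Omega(b)\vz)^p(\vz(E_j,\cdot))^{1-p}$, and the hypothesis $\vz^{q'}\in\aa_{(p+\frac{p\beta}{n}-\frac1q)q'}$ is precisely what makes the resulting geometric series in $j$ converge (this is the modification of ${\mathrm{I_2}}$ in Lemma \ref{lemma.2} indicated in the paper, following \cite[Theorem 1.5]{ll07}). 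Your plan would have to be reorganized along these lines; as written, its key estimate fails.

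A second, independent problem is your choice of atoms. You verify \eqref{a1} for multiples of $(\vz,q',s)$-atoms, but Theorem \ref{yt} rests on the atomic decomposition $H^\vz=H^{\vz,q',s}_{\rm{at}}$ (Lemma \ref{yzfj}), which requires the triplet to be admissible, i.e.\ $q'>q(\vz)$. The hypotheses of Theorem \ref{dingli.2} do not guarantee this: e.g.\ for $n=2$, $p=1$, $\beta=1/2$, $q$ large (so $q'$ close to $1$) and $\vz(x,t):=|x|^{a}$ with $a$ slightly below $n[(p+\frac{p\beta}{n}-\frac1q)q'-1]/q'$, one has $\vz^{q'}\in\aa_{(p+\frac{p\beta}{n}-\frac1q)q'}$ while $q(\vz)=1+a/n>q'$. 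The paper sidesteps this by working throughout with $(\vz,\fz,s)$-atoms (always admissible), so that the exponent $q$ of $\Omega$ enters only through H\"older against the weight (producing the $\vz^{q'}$ in Lemma \ref{lin}(ii)), never through the size condition of the atom. Your near-ball estimate is also cleaner if done as in Lemma \ref{lemma.2}, i.e.\ with the weighted $L^d$ bound of Theorem A applied with the weight $\vz(\cdot,t)$ uniformly in $t$, rather than with unweighted $L^2$ boundedness plus H\"older.
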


\begin{corollary}\label{tuilun.1}
Let $\rho\in(0,\,\infty)$, $\alpha\in(0,\,1]$, $\beta:=\min\{\alpha,\,1/2\}$
and $\vz$ be a growth function as in Definition \ref{d2.3} with $p\in({n}/{(n+\beta)},\,1]$ therein.
%satisfying the uniformly locally dominated convergence condition.
Suppose that $\Omega\in {\rm{Din}}^\fz_{\alpha}(S^{n-1})$.
If $\vz\in\aa_{p(1+\frac{\beta}{n})}$,
then there exists a positive constant $C$ independent of $f$ such that
$$\lf\|\mu_{\Omega}^{\rho}(f)\r\|_{L^\vz} \leq C\|f\|_{H^\vz}.$$
\end{corollary}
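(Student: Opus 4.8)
The plan is to deduce Corollary \ref{tuilun.1} directly from Theorem \ref{dingli.2}, exploiting the fact that $\Omega\in{\rm{Din}}^\fz_\alpha(S^{n-1})$ means $\Omega\in{\rm{Din}}^q_\alpha(S^{n-1})$ for \emph{every} $q\in(1,\,\fz)$, so we have the freedom to choose $q$ as large as we like. Given the hypothesis $\vz\in\aa_{p(1+\beta/n)}$, the goal is to pick $q$ (equivalently $q'$ close to $1$) so that the weight condition $\vz^{q'}\in\aa_{(p+\frac{p\beta}{n}-\frac1q)q'}$ of Theorem \ref{dingli.2} is satisfied; then Theorem \ref{dingli.2} immediately yields $\|\mu^\rho_\Omega(f)\|_{L^\vz}\ls\|f\|_{H^\vz}$, which is the assertion.

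First I would record the endpoint behaviour as $q\to\fz$: then $q'\to1$, so $\vz^{q'}\to\vz$ (in the sense that $\vz^{q'}$ should inherit the $\aa_\fz$ structure of $\vz$), and the exponent $(p+\frac{p\beta}{n}-\frac1q)q'\to p+\frac{p\beta}{n}=p(1+\frac\beta n)$, which is exactly the index appearing in the corollary's hypothesis. So heuristically the corollary is the $q=\fz$ limit of Theorem \ref{dingli.2}. To make this rigorous, since $\vz\in\aa_{p(1+\beta/n)}$ and the Muckenhoupt classes are open from above in the sense that membership in $\aa_r$ is an open condition in $r$ via a reverse H\"older / self-improvement argument (this is the standard fact that $\vz\in\aa_r$ implies $\vz\in\aa_{r-\ez}$ for some $\ez>0$, applied uniformly in $t$), I would first note that actually $\vz\in\aa_{r_0}$ for some $r_0<p(1+\beta/n)$. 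Then I would invoke the uniformly reverse H\"older property of $\aa_\fz$ weights: there exists $s>1$ such that $\vz^{s}\in\aa_\fz$, and more precisely $\vz\in\aa_{r_0}\cap\mathbb{RH}_s$ uniformly in $t$ implies $\vz^{s}\in\aa_{s(r_0-1)+1}$ (the standard power-weight/Muckenhoupt identity, valid here uniformly in the parameter $t$ since all the constants in Definitions \ref{d2.2}(i) and (ii) are uniform in $t$).

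Next I would choose the parameters quantitatively. Set $q'=s$ for the $s>1$ just produced and $q=s'=s/(s-1)\in(1,\,\fz)$; then $\vz^{q'}=\vz^{s}\in\aa_{s(r_0-1)+1}$. It remains to check $s(r_0-1)+1\le(p+\frac{p\beta}{n}-\frac1q)q'=(p(1+\tfrac\beta n)-\tfrac1s)\cdot s=sp(1+\tfrac\beta n)-1$, i.e. $s(r_0-1)+1\le sp(1+\tfrac\beta n)-1$, i.e. $s\big(r_0-1-p(1+\tfrac\beta n)\big)\le-2$, i.e. $s\big(p(1+\tfrac\beta n)-r_0+1\big)\ge2$. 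Since $r_0<p(1+\beta/n)$ we have $p(1+\tfrac\beta n)-r_0+1>1$, so this inequality holds once $s$ is taken large enough — but $s$ is only guaranteed to exist slightly bigger than $1$, so instead I should first fix $s$ from the reverse H\"older lemma and then choose $r_0$ accordingly: since $\vz\in\aa_{p(1+\beta/n)}$, use the openness to get $\vz\in\aa_{r_0}$ with $r_0$ so close to $p(1+\beta/n)$ that $s(r_0-1)+1\le sp(1+\tfrac\beta n)-1$ becomes $s(r_0-p(1+\tfrac\beta n))\le-2$, i.e. $p(1+\tfrac\beta n)-r_0\ge 2/s$; this can fail if $s$ is too close to $1$. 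The clean fix is to run the self-improvement in the other direction: $\vz\in\aa_{p(1+\beta/n)}\subset\aa_\fz$, so there is $\ez_0>0$ with $\vz\in\aa_{p(1+\beta/n)-\ez_0}$, and simultaneously (shrinking if necessary) an exponent $s=1+\eta>1$ with $\vz\in\mathbb{RH}_s$ and $s\ez_0\le (s-1)\cdot 2 + \eta\cdot\text{(something)}$ — more transparently, monotone-continuity of $s\mapsto s(r_0-1)+1$ at $s=1$ (where it equals $r_0<p(1+\beta/n)$) lets us pick $s>1$ close enough to $1$ that $s(r_0-1)+1< sp(1+\beta/n)-1$ still holds, because at $s=1$ the right side equals $p(1+\beta/n)-1$ which exceeds $r_0$ provided $p(1+\beta/n)-r_0>1$...

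I expect the genuinely delicate point to be exactly this bookkeeping: one must choose $r_0<p(1+\beta/n)$ (from $\aa$-openness) and $s>1$ (from uniform reverse H\"older) \emph{jointly} so that both $\vz^{s}\in\aa_{s(r_0-1)+1}$ and $s(r_0-1)+1\le sp(1+\frac\beta n)-1$ hold. Because $p(1+\beta/n)\le p(1+1/(2n))<2$ while the target index can be as small as close to $1$, the slack $p(1+\beta/n)-r_0$ need not exceed $1$, so one cannot simply take $s\to\fz$; instead one takes $s$ close to $1$ and uses that $s\mapsto sp(1+\frac\beta n)-1-[s(r_0-1)+1]=s[p(1+\frac\beta n)-r_0+1]-2$ is continuous and equals $p(1+\frac\beta n)-r_0-1$ at $s=1$, which is negative in general — so the correct move is: first apply $\aa$-openness to get $r_0$ with $p(1+\beta/n)-r_0$ as small as we wish, then the needed inequality $s[p(1+\frac\beta n)-r_0+1]\ge2$ forces $s\ge 2/(p(1+\frac\beta n)-r_0+1)$, a value $>1$ but bounded, and the uniform reverse H\"older lemma for $\vz\in\aa_\fz$ must be invoked at that specific $s$; one then has to verify $\vz\in\mathbb{RH}_s$ for that $s$, which holds for $s$ sufficiently close to $1$, hence one must instead keep $r_0$ close to $p(1+\beta/n)$ only moderately so that the forced $s$ stays in the reverse-H\"older range. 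Tracking these competing constraints — and checking all Muckenhoupt/reverse-H\"older constants remain uniform in the time variable $t$, which is what makes the argument legitimate in the Musielak--Orlicz setting — is the heart of the proof; once the pair $(q,\,q'=s)$ is fixed, Theorem \ref{dingli.2} applies verbatim and finishes the argument. The verification that $\Omega\in{\rm{Din}}^\fz_\alpha(S^{n-1})$ supplies $\Omega\in{\rm{Din}}^q_\alpha(S^{n-1})$ for this particular finite $q$ is immediate from the definition ${\rm{Din}}^\fz_\alpha(S^{n-1})=\bigcap_{q\ge1}{\rm{Din}}^q_\alpha(S^{n-1})$.
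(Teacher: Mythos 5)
Your overall strategy --- reduce to Theorem \ref{dingli.2} by choosing a finite $q$, using ${\rm{Din}}^\fz_{\alpha}(S^{n-1})\subset{\rm{Din}}^q_{\alpha}(S^{n-1})$ --- is the same as the paper's, but your execution contains a genuine gap stemming from an arithmetic slip, and the argument is never actually completed. With your choice $q'=s$ and $q=s'=s/(s-1)$ one has $1/q=1-1/s$, so the index required by Theorem \ref{dingli.2} is $(p+\frac{p\beta}{n}-\frac1q)q'=s\,(p(1+\frac{\beta}{n})-1)+1$, not $sp(1+\frac{\beta}{n})-1$ as you wrote (you substituted $1/q=1/s$); the two differ by $s-2$. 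This miscalculation is exactly what produces your spurious constraint $s\,(p(1+\frac{\beta}{n})-r_0+1)\ge 2$: since $r_0\ge1$ and $p(1+\frac{\beta}{n})<2$, it would force $s$ to be bounded away from $1$ (roughly $s\gtrsim1.6$), which the uniform reverse H\"older self-improvement does not supply. You notice this tension, but the proposal ends in an unresolved discussion of ``competing constraints'' without ever exhibiting an admissible pair $(q,\,q')$, so as written the proof does not go through.

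With the correct arithmetic there is no tension at all. The paper's route: since $p>n/(n+\beta)$ gives $A:=p(1+\frac{\beta}{n})>1$, for \emph{every} $q\in(1,\,\fz)$ one has $(p+\frac{p\beta}{n}-\frac1q)q'=(Aq-1)/(q-1)>A$; then Lemma \ref{quan}(ii) gives some $d\in(1,\,\fz)$ with $\vz^d\in\aa_{A}$, and taking $q:=d/(d-1)$, i.e. $q'=d$, yields $\vz^{q'}\in\aa_{A}\subset\aa_{(p+\frac{p\beta}{n}-\frac1q)q'}$, so Theorem \ref{dingli.2} applies verbatim. Alternatively, your reverse H\"older route also works once corrected: taking $r_0:=A$ (no openness of the $\aa$-index is needed), the requirement $s(r_0-1)+1\le s(A-1)+1$ is trivially an equality, so any $s>1$ with $\vz\in\mathbb{RH}_s$ uniformly in $t$ suffices; but note that the identity ``$\vz\in\aa_{r}\cap\mathbb{RH}_s$ implies $\vz^{s}\in\aa_{s(r-1)+1}$'' is not among the lemmas the paper provides, so you would have to prove it (with constants uniform in $t$), which the shorter argument via Lemma \ref{quan}(ii) avoids.
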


\begin{theorem}\label{dingli.3}
Let $\rho\in(0,\,\infty)$ and $\vz$ be a growth function as in Definition \ref{d2.3} with $p:=1$ therein.
For a given positive constant $\tilde{C}$, suppose $\Omega\in{L}^{q}(S^{n-1})$ with $q\in(1,\,\infty)$
such that, for any $y\neq{\mathbf{0}}$, $h\in\rn$ and $t\in[0,\,\infty)$,
\begin{align*}
\int_{|x|\geq2|y|}\lf|\frac{\Omega(x-y)}{|x-y|^{n}}-\frac{\Omega(x)}{|x|^{n}}\r| \vz(x+h,\,t)\,dx
\leq \tilde{C}\vz(x+h,\,t).
\end{align*}
If $\varphi^{q'}\in\aa_{1}$,
then there exists a positive constant $C$ independent of $f$ such that
$$\lf\|\mu_{\Omega}^{\rho}(f)\r\|_{L^\vz} \leq C\|f\|_{H^\vz}.$$
\end{theorem}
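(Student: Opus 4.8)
\textbf{Proof proposal for Theorem \ref{dingli.3}.}
The plan is to apply the boundedness criterion of Theorem \ref{yt}. Since $p=1$ and the condition $\vz^{q'}\in\aa_1$ already implies $\mu^\rho_\Omega$ is bounded on $L^2$ via Theorem A (taking the constant weight $\omega\equiv1$, or rather using that $\vz^{q'}\in\aa_1\subset\aa_2$ and the unweighted reduction), it remains only to verify the atomic estimate \eqref{a1}: for any $\lz\in(0,\,\fz)$ and any multiple $b$ of a $(\vz,\,q,\,s)$-atom supported in a ball $B:=x_0+B_{r_0}$, one needs
$$\int_{\rn}\vz\lf(x,\,\frac{|\mu^\rho_\Omega(b)(x)|}{\lz}\r)\,dx\ls\vz\lf(B,\,\frac{\|b\|_{L^q_\vz(B)}}{\lz}\r).$$
First I would split the integral over $\rn$ into the part over $2B:=x_0+B_{2r_0}$ and the part over $(2B)^\complement$. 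On $2B$, I would use the $L^2$-boundedness of $\mu^\rho_\Omega$ together with H\"older's inequality (with exponent $q$ and $q'$) against the weight $\vz(\cdot,t)$, the uniform $\aa_{q'}$-type control coming from $\vz^{q'}\in\aa_1\subset\aa_{q'}$, and the normalization $\|b\|_{L^q_\vz(B)}\le\|\chi_B\|_{L^\vz}^{-1}$ of the atom; the uniformly lower/upper type properties of $\vz$ then convert the resulting bound into the right-hand side. This local part is essentially routine.

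The substantive part is the estimate away from the ball. For $x\in(2B)^\complement$ one exploits the vanishing moment $\int_\rn b(y)\,dy=0$ (here $s\ge m(\vz)=0$ since $p=1$ forces $i(\vz)\le1\le q(\vz)$, hence $m(\vz)\ge0$, and one only needs $s=0$) to write, for each $t>0$,
$$F^\rho_{\Omega,\,t}(b)(x)=\int_B\lf[\frac{\Omega(x-y)}{|x-y|^{n-\rho}}\chi_{\{|x-y|\le t\}}-\frac{\Omega(x-x_0)}{|x-x_0|^{n-\rho}}\chi_{\{|x-x_0|\le t\}}\r]b(y)\,dy.$$
The plan is to bound the $L^2(dt/t^{2\rho+1})$-norm of the bracketed kernel difference, splitting according to whether the two balls $\{|x-y|\le t\}$ and $\{|x-x_0|\le t\}$ both contain or both exclude the relevant points, leaving only an annulus of $t$'s of controlled length; this is the standard Marcinkiewicz kernel manipulation (cf. \cite{ll07}) and produces, after integrating in $t$, the pointwise majorant
$$\mu^\rho_\Omega(b)(x)\ls\int_B|b(y)|\lf|\frac{\Omega(x-y)}{|x-y|^n}-\frac{\Omega(x-x_0)}{|x-x_0|^n}\r|\,dy+\text{(a term with extra decay $\tfrac{r_0}{|x-x_0|}$ times a maximal-type average)},$$
valid for $|x-x_0|\ge2|y-x_0|$, i.e. exactly the situation the hypothesis on $\Omega$ is designed for. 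The hypothesized inequality
$$\int_{|x|\ge2|y|}\lf|\frac{\Omega(x-y)}{|x-y|^n}-\frac{\Omega(x)}{|x|^n}\r|\vz(x+h,\,t)\,dx\le\tilde C\,\vz(x+h,\,t)$$
(applied with the translation $h$ chosen so that $x+h$ runs over the correct center, and $y$ replaced by $y-x_0$) is precisely what lets one integrate the main term against $\vz(x,t)$ over $x\in(2B)^\complement$ and sum against $|b(y)|$ using Fubini, Tonelli and the $L^q_\vz(B)$-normalization of $b$, yielding $\ls\vz(B,\,\|b\|_{L^q_\vz(B)}/\lz)$ after invoking the uniform upper/lower type of $\vz$ and Lemma \ref{ckj} to pull the $1/\lz$ inside. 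The secondary term with the $r_0/|x-x_0|$ decay is handled by the usual dyadic-annuli decomposition of $(2B)^\complement$, together with the $\aa_\fz$ (doubling) property of $\vz$ and its uniform lower type $p=1$, which makes the geometric-type sum over annuli converge.

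I expect the main obstacle to be the passage from the time-integrated estimate to a clean pointwise bound on $\mu^\rho_\Omega(b)(x)$ of the displayed form: because the space variable $x$ and the time variable $t$ in $\vz(x,t)$ are inseparable (as the authors emphasize in the introduction), one cannot first integrate in $t$ in a $\vz$-weighted sense and must instead obtain the pointwise majorant by a careful analysis of the Marcinkiewicz square function integral — controlling the contribution of the characteristic-function mismatch $\chi_{\{|x-y|\le t\}}-\chi_{\{|x-x_0|\le t\}}$, which is supported in an interval of $t$-length comparable to $r_0$ and forces the extra $r_0/|x-x_0|$ gain. Once that pointwise inequality is in hand, the rest is a direct application of the hypothesis on $\Omega$, the $\aa_\infty$/type properties of $\vz$, and Theorem \ref{yt}.
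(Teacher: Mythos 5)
Your route is correct in outline but genuinely different from the paper's. The paper proves Theorem \ref{dingli.3} by a reduction: since $p=1$ forces $\vz$ to be of uniformly lower \emph{and} upper type $1$, one gets $\vz(x,\,st)\sim s\,\vz(x,\,t)$, hence $\vz(x,\,t)\sim t\,\vz(x,\,1)$, so $H^\vz=H^1_{\vz(\cdot,\,1)}$ and $L^\vz=L^1_{\vz(\cdot,\,1)}$ with equivalent norms, and the theorem follows by repeating the weighted $H^1_\omega\to L^1_\omega$ argument of \cite[Theorem 1.8]{ll07} with $\omega:=\vz(\cdot,\,1)$. You instead stay inside the Musielak--Orlicz framework, verifying the atomic estimate \eqref{a1} for $(\vz,\,q,\,s)$-atoms and invoking Theorem \ref{yt}; your near/far splitting, the use of the vanishing moment ($s=0$ suffices since $q(\vz)=1=i(\vz)$, so $m(\vz)=0$), the $L^2(dt/t^{2\rho+1})$ kernel-difference estimate, and the application of the weighted H\"ormander-type hypothesis after translating by $h=x_0$ are exactly the Musielak--Orlicz transcription of Lin--Lin's kernel estimates, and they do close (note that, implicitly, your conversion of $\int\vz(x,\,\mu^\rho_\Omega(b)(x)/\lz)\,dx$ into $\lz^{-1}$ times a $\vz(\cdot,\,\cdot)$-weighted integral of $\mu^\rho_\Omega(b)$ uses the same linearization $\vz(x,\,st)\sim s\,\vz(x,\,t)$ that drives the paper's reduction). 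The paper's proof is shorter because it offloads all kernel work to \cite{ll07}; yours is self-contained relative to the paper's own criterion Theorem \ref{yt} and makes the role of the $t$-uniform weight explicit.

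Two small repairs are needed in your sketch, neither fatal. First, on $2B$ the unweighted $L^2$-boundedness plus H\"older against $\vz(\cdot,\,t)$ need not close when $q'<2$ (one would need a reverse H\"older inequality of order $2$); the clean mechanism is the uniform weighted bound from Theorem A with exponent $q$ and weight $\vz(\cdot,\,t)$, which applies because $[\vz(\cdot,\,t)]^{q'}\in\aa_1\subset\aa_q$ uniformly in $t$, followed by H\"older with respect to $\vz(\cdot,\,t)\,dx$, the atom normalization $\int_B|b|^q\vz(x,\,t)\,dx\le\|b\|^q_{L^q_\vz(B)}\vz(B,\,t)$, and Lemma \ref{bcd}. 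Second, the characteristic-function mismatch $\chi_{\{|x-y|\le t\}}-\chi_{\{|x-x_0|\le t\}}$ lives on a $t$-interval of length $\sim r_0$ but, after taking the square root in the $L^2(dt/t^{2\rho+1})$ norm, yields a gain of order $(r_0/|x-x_0|)^{1/2}$ rather than $r_0/|x-x_0|$; this is still summable over dyadic annuli once the $|\Omega|$ factors are handled by H\"older with $\Omega\in L^q(S^{n-1})$ and $\vz^{q'}\in\aa_1$, together with $\int_B|b(y)|\,dy\ls|B|\,\|b\|_{L^q_\vz(B)}$ (valid since $\vz(\cdot,\,t)\in\aa_q$). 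Also note the hypothesis of the theorem as printed has an evident typo (its right-hand side should read $\tilde C\vz(y+h,\,t)$, as in \cite{ll07}); your application of it indeed presupposes this corrected form.
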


\begin{theorem}\label{dingli.4}
Let $\rho\in(0,\,\infty)$, $\alpha\in(0,\,1]$, $\beta:=\min\{\alpha,\,1/2\}$ and
$\vz$ be a growth function as in Definition \ref{d2.3} with $p:={n}/{(n+\beta)}$ therein, and $I(\vz)\in(0,\,1)$, where $I(\vz)$ is as in \eqref{e2.1.1}.
Suppose that $\Omega\in{\rm{Lip}}_\alpha(S^{n-1})$.
If $\varphi\in\aa_{1}$, then there exists a positive constant $C$ independent of $f$ such that
$$\lf\|\mu_{\Omega}^{\rho}(f)\r\|_{WL^\vz} \leq C\|f\|_{H^\vz}.$$
\end{theorem}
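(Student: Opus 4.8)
The plan is to apply the weak-type boundedness criterion of Theorem \ref{yt2} to the operator $T:=\mu^\rho_\Omega$. Since $\mu^\rho_\Omega$ is a positive sublinear operator (it is the $L^2(dt/t^{2\rho+1})$-norm of a family of linear operators, so positivity and subadditivity are immediate) and, by H\"ormander's theorem \cite{h60} together with $\Omega\in{\rm{Lip}}_\alpha(S^{n-1})\subset L^\infty(S^{n-1})$, it is bounded on $L^2$, the hypotheses on $T$ in Theorem \ref{yt2} are met. The growth function $\vz$ here has lower type $p=n/(n+\beta)$ and $I(\vz)\in(0,1)$, so the structural hypothesis of Theorem \ref{yt2} is also satisfied. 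It therefore remains to verify the single atomic weak-type estimate \eqref{z11}: for every $\lz\in(0,\infty)$ and every multiple $b$ of a $(\vz,q,s)$-atom supported in a ball $B=x_0+B_{r}$,
\begin{align*}
\sup_{\alpha\in(0,\,\fz)}\vz\lf(\lf\{|\mu^\rho_\Omega(b)|>\alpha\r\},\,\frac{\alpha}{\lz}\r)\ls\vz\lf(B,\,\frac{\|b\|_{L^q_\vz(B)}}{\lz}\r).
\end{align*}
By homogeneity in $b$ (Definition \ref{d2.zh}(ii)) we may renormalize and reduce to $\lz=\|b\|_{L^q_\vz(B)}$, i.e. to proving $\sup_\alpha\vz(\{|\mu^\rho_\Omega(b)|>\alpha\},\alpha/\|b\|_{L^q_\vz(B)})\ls\vz(B,1)$.

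The key step is to split $\rn$ into the dilated ball $2\sqrt n\,B$ (the ``local'' part) and its complement (the ``far'' part), and to estimate the level sets $\{|\mu^\rho_\Omega(b)|>\alpha\}$ separately on each. On the local part I would use the $L^2$-boundedness of $\mu^\rho_\Omega$ together with Chebyshev's inequality and the $L^q_\vz(B)$-size condition on $b$ to control $\vz(\{x\in 2\sqrt n B:|\mu^\rho_\Omega(b)(x)|>\alpha\},\cdot)$; here the uniform Muckenhoupt condition $\vz\in\aa_1$ (equivalently $\vz\in\aa_\fz$) and the reverse-doubling/doubling properties of $\vz(\cdot,t)$ let one pass from the Lebesgue measure of the level set to its $\vz$-measure and absorb the dilation factor $2\sqrt n$. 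On the far part, one uses the vanishing moment of $b$ (order $s\ge m(\vz)$, and since $p=n/(n+\beta)$ one needs $s\ge0$, i.e. just the mean-zero condition) to subtract a Taylor polynomial of the kernel, producing the pointwise bound
\begin{align*}
\mu^\rho_\Omega(b)(x)\ls\frac{r^{\,\beta}}{|x-x_0|^{n+\beta}}\int_B|b(y)|\,dy\quad\text{for }x\notin 2\sqrt n B,
\end{align*}
whose derivation is precisely the content of the subtle pointwise estimate advertised in Lemma \ref{lemma.1}; the Lipschitz condition on $\Omega$ (rather than merely a Dini condition) is what makes the kernel difference estimate clean and gives the full gain $\beta=\min\{\alpha,1/2\}$ needed at the endpoint exponent $p=n/(n+\beta)$. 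Combining this pointwise bound with H\"older's inequality, the $L^q_\vz(B)$-size of $b$, the upper type $1$ and the assumption $\vz\in\aa_1$ (which, via \eqref{e1.1}-type weighted inequalities and the condition displayed in Theorem \ref{dingli.3}, controls the weighted integral of $|x-x_0|^{-(n+\beta)}$ over the annuli $2^{k}B\setminus 2^{k-1}B$), one sums a geometric series in $k$ and obtains the far-part contribution bounded by $\vz(B,1)$.

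The main obstacle I expect is the far-part weighted summation at the critical exponent $p=n/(n+\beta)$: one must show $\sum_{k\ge1}2^{-k\beta q'(\text{something})}$-type series converge, which forces the precise Muckenhoupt exponent in the hypothesis ($\vz\in\aa_1$) and requires carefully tracking how $\vz(2^kB,t)/\vz(B,t)$ grows in $k$ uniformly in $t$ — this is where the inseparability of $x$ and $t$ in $\vz(x,t)$ bites, and where one cannot simply quote the scalar-weight computation of \cite{ll07,w16} but must instead invoke the uniform $\aa_q$ machinery (Definition \ref{d2.2}) and the displayed hypothesis of Theorem \ref{dingli.3}. A secondary technical point is justifying that $\mu^\rho_\Omega(b)$ is genuinely finite a.e.\ and that the formal manipulations (interchanging the $t$-integral with the splitting of $\rn$, differentiating the kernel) are legitimate; this is handled by first assuming $b\in L^2$ (atoms are), so that $\mu^\rho_\Omega(b)\in L^2$, and then passing to the general statement through the extension procedure already built into Theorem \ref{yt2}. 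Once \eqref{z11} is established, Theorem \ref{yt2} delivers the conclusion $\|\mu^\rho_\Omega(f)\|_{WL^\vz}\ls\|f\|_{H^\vz}$ directly.
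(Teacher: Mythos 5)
Your overall framework---apply Theorem \ref{yt2} to the positive sublinear, $L^2$-bounded operator $\mu^\rho_\Omega$ and reduce the theorem to the atomic weak-type estimate \eqref{z11}---is exactly the paper's strategy, and the pointwise far-field bound $\mu^\rho_\Omega(b)(x)\ls\|b\|_{L^\fz}\,r^{n+\beta}|x-x_0|^{-(n+\beta)}$ outside a dilate of $B$ is indeed the correct key estimate. (Two attribution slips: this bound is \emph{not} Lemma \ref{lemma.1}, whose bound does not decay in $R$ and serves the Dini-condition theorems; in the Lipschitz case the paper imports the decaying bound from the proof of \cite[Theorem 1.1]{w16}. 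Also, the displayed H\"ormander-type condition of Theorem \ref{dingli.3} and the cancellation \eqref{e1.1} are not hypotheses available here and play no role.)

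The genuine gap is in how you propose to use the far-field bound. Your plan---H\"older's inequality, the atom's size, upper type $1$, and summation of a ``geometric series'' of weighted integrals over the annuli $2^kB\setminus 2^{k-1}B$---is a strong-type argument, and at the critical exponent $p=n/(n+\beta)$ it does not close: the uniformly lower type $p$ property converts the $k$-th annular integral into roughly $2^{-k(n+\beta)p}\,\vz(2^kB,\|b\|_{L^\fz}/\lz)\ls 2^{k[n-(n+\beta)p]}\vz(B,\|b\|_{L^\fz}/\lz)$ by Lemma \ref{bcd} with $\vz\in\aa_1$, and since $(n+\beta)p=n$ the terms are constant, so the series diverges. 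You flag this as ``the main obstacle'' but offer no mechanism to overcome it. The missing idea---and the reason the conclusion is only a $WL^\vz$ bound---is to estimate the weak quantity directly, as in the paper's Lemma \ref{m11}: for each fixed $\az$ the superlevel set of the majorant is contained in the ball $\lf(\|b\|_{L^\fz}/\az\r)^{1/(n+\beta)}B_r$ (and only $\az<\|b\|_{L^\fz}$ matters), so Lemma \ref{bcd} with $\vz\in\aa_1$ produces the factor $(\|b\|_{L^\fz}/\az)^{n/(n+\beta)}$, which cancels \emph{exactly} against the factor $(\az/\|b\|_{L^\fz})^{n/(n+\beta)}$ coming from the uniformly lower type $n/(n+\beta)$ when passing from $\vz(B_r,\az/\lz)$ to $\vz(B_r,\|b\|_{L^\fz}/\lz)$. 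Secondary points: run the argument with $(\vz,\fz,s)$-atoms (Theorem \ref{yt2} is applied with $q=\fz$), since both your pointwise bound and your local Chebyshev step really use $\|b\|_{L^\fz}$; for the local part the paper avoids the Lebesgue-to-$\vz$-measure passage by bounding $\sup_\az\vz(\{x\in 2B:\mu^\rho_\Omega(b)(x)>\az\},\az/\lz)\le\int_{2B}\vz(x,\mu^\rho_\Omega(b)(x)/\lz)\,dx$ and then using upper type $1$ together with Theorem A (weighted boundedness, with $\vz^{q'}\in\aa_2$ from Lemma \ref{quan}(ii)); and the renormalization ``reduce to $\lz=\|b\|_{L^q_\vz(B)}$'' is unnecessary---\eqref{z11} must hold for every $\lz$, and since $\vz$ is not homogeneous in $t$ the cleanest course is simply to carry $\lz$ through the whole estimate.
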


\begin{remark}\label{r3}
\begin{enumerate}
\item[\rm{(i)}]
It is worth noting that Corollary \ref{tuilun.1} can be regarded as the limit
case of Theorem \ref{dingli.2} by letting $q\rightarrow\fz$.

\item[\rm{(ii)}]
Theorem \ref{dingli.1}, Theorem \ref{dingli.2} and Corollary \ref{tuilun.1} jointly answer the question:
when $\Omega\in{\rm{Din}}^q_{\alpha}(S^{n-1})$ with $q=1$, $q\in(1,\,\infty)$ or $q=\infty$,
respectively, what kind of additional conditions on growth function $\varphi$ and $\Omega$
can deduce the boundedness of $\mu^\rho_\Omega$ from $H^\vz$ to $L^\vz$?

\item[\rm{(iii)}]
When $\rho:=1$, Theorem \ref{dingli.1}, Theorem \ref{dingli.2} and Corollary \ref{tuilun.1}
are reduced to \cite[Theorem 2.4, Theorem 2.5 and Corollary 2.6]{lll17}, respectively.

\item[\rm{(iv)}]
Let $\omega$ be a classic Muckenhoupt weight and $\phi$ an Orlicz function.

\quad(a) When $\vz(x,\,t):=\omega(x)\phi(t)$ for all $(x,\,t)\in\rn\times[0,\,\fz)$,
we have $H^\vz=H^\phi_\omega$. In this case,
Theorem \ref{dingli.1}, Theorem \ref{dingli.2}, Corollary \ref{tuilun.1} and Theorem \ref{dingli.3} hold true for weighted Orlicz Hardy space. Even when $\varphi(x,\,t):=\phi(t)$,
the above results are also new.

\quad(b) When $\vz(x,\,t):=\omega(x)\,t^p$  for all $(x,\,t)\in\rn\times[0,\,\fz)$,
we have $H^\vz=H^p_\omega$. In this case, if $\rho:=1$, Theorem \ref{dingli.1}, Theorem \ref{dingli.2}, Corollary \ref{tuilun.1}
and Theorem \ref{dingli.3} are reduced to \cite[Theorem 1.4, Theorem 1.5, Corollary 1.7 and Theorem 1.8]{ll07}, respectively.

\quad(c) When $\vz(x,\,t):=\omega(x)\,t^p$  for all $(x,\,t)\in\rn\times[0,\,\fz)$,
we have $H^\vz=H^p_\omega$.
In this case, the assumptions of $\rho$ and $\Omega$
in Corollary \ref{tuilun.1} are weaker than that in \cite[Theorem 1.1]{w16}.
Precisely, in \cite[Theorem 1.1]{w16}, $\rho\in(0,\,n)$ and $\Omega\in{\rm{Lip}}_\alpha(S^{n-1})$,
however, in our case, $\rho\in(0,\,\fz)$ and $\Omega$ satisfies some weaker smoothness conditions, i.e., $\Omega\in{\rm{Din}}^\fz_{\alpha}(S^{n-1})$.

\quad(d) When $\vz(x,\,t):=\omega(x)\,t^p$  for all $(x,\,t)\in\rn\times[0,\,\fz)$, we have
$H^\vz=H^p_\omega$. In this case, if $\rho$ is restricted to $(0,\,n)$, Theorem \ref{dingli.4} is reduced to \cite[Theorem 1.2]{w16}.
\end{enumerate}
\end{remark}

%%%%%%%%%%%%%%%%%%%%%%%%%%%%%%%%%%%%%%%%%%%%%%%%%%%%%%%%%%%%%%%%%%%%%%%

%%%%%%%%%%%%%%%%%%%%%%%settion 3 %%%%%%%%%%%%%%%%%%%%%%%%%%%%%%%%%%%%%

%%%%%%%%%%%%%%%%%%%%%%%%%%%%%%%%%%%%%%%%%%%%%%%%%%%%%%%%%%%%%%%%%%%%%%

To show main results, let us begin with some lemmas.
Since $\varphi$ satisfies the uniform Muckenhoupt condition, the
proofs of $\rm{(i)}$, $\rm{(ii)}$ and $\rm{(iii)}$ of the following Lemma \ref{quan} are identity to that of
{\rm\cite[Exercises 9.1.3, Theorem 9.2.5 and Corollary 9.2.6]{g09m}}, respectively, the details being omitted.
\begin{lemma}\label{quan}
Let $q\in[1,\,\fz]$. If $\vz\in \aa_q$, then the following statements hold true:
\begin{enumerate}
\item[\rm{(i)}] $\vz^\varepsilon\in \aa_q$ for any $\varepsilon\in(0,\,1]$;
\item[\rm{(ii)}]$\vz^\eta\in \aa_q$ for some $\eta\in(1,\,\fz)$;
%\item[\rm{(iii)}] $\vz\in \aa_r$ for any $r\in(q,\,\fz)$;
\item[\rm{(iii)}]$\vz\in \aa_d$ for some $d\in(1,\,q)$ with $q\neq1$.
\end{enumerate}
\end{lemma}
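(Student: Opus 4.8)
The plan is to prove the three parts of Lemma \ref{quan} by reducing each assertion, uniformly in the time variable $t\in(0,\,\fz)$, to the corresponding classical fact about a single Muckenhoupt weight, with all constants controlled solely by the uniform $\aa_q$-constant of $\vz$. The crucial point is that, for each fixed $t$, the function $\vz(\cdot\,,t)$ lies in the classical Muckenhoupt class $A_q$ with an $A_q$-constant bounded above by the uniform constant $C$ appearing in Definition \ref{d2.2}(i); hence every classical argument, applied with $\vz(\cdot\,,t)$ in place of an ordinary weight, produces an output whose quantitative bounds depend only on $C$, $n$, $q$, and never on $t$. This is exactly why one may simply invoke \cite[Exercises 9.1.3, Theorem 9.2.5 and Corollary 9.2.6]{g09m}: the stated details being omitted amounts to checking that each of those proofs is ``uniform in $t$''.

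For part (i), I would fix $t\in(0,\,\fz)$ and $\varepsilon\in(0,\,1]$ and verify that $[\vz(\cdot\,,t)]^\varepsilon\in A_q$ with a constant depending only on the $A_q$-constant of $\vz(\cdot\,,t)$ (equivalently on the uniform $\aa_q$-constant of $\vz$). When $q\in(1,\,\fz)$ this follows from Jensen's (or H\"older's) inequality applied to the two averages $\frac{1}{|B|}\int_B[\vz(x,\,t)]^\varepsilon\,dx$ and $\frac{1}{|B|}\int_B[\vz(x,\,t)]^{-\varepsilon/(q-1)}\,dx$, using that $w\mapsto w^\varepsilon$ is concave; the cases $q=1$ and $q=\fz$ are handled analogously with the essential supremum. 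Since the resulting estimate is uniform in $t$, taking the supremum over $t$ gives $\vz^\varepsilon\in\aa_q$. For part (ii), I would invoke the self-improvement property of Muckenhoupt weights: for each fixed $t$, $\vz(\cdot\,,t)\in A_q$ implies $[\vz(\cdot\,,t)]^\eta\in A_q$ for some $\eta>1$, and — this is the key subtlety — the classical proof (via the reverse H\"older inequality, \cite[Theorem 9.2.5]{g09m}) shows that $\eta$ and the new $A_q$-constant may be chosen to depend only on $n$, $q$, and the $A_q$-constant of $\vz(\cdot\,,t)$, hence only on the uniform constant of $\vz$. Choosing one such $\eta$ (independent of $t$) and taking the supremum over $t$ yields $\vz^\eta\in\aa_q$. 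Part (iii), for $q\in(1,\,\fz]$, follows the same template from the left-openness of $A_q$ (\cite[Corollary 9.2.6]{g09m}): for each $t$ there is $d\in(1,\,q)$ with $\vz(\cdot\,,t)\in A_d$, the exponent $d$ and the constant again depending only on the uniform data, so a single $d$ works for all $t$ and $\vz\in\aa_d$.

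The main obstacle — and the only real content beyond quoting \cite{g09m} — is the uniformity bookkeeping: one must make sure that in each of the three classical proofs the exponent ($\varepsilon$ is given, but $\eta$ in (ii) and $d$ in (iii) are produced) and the output constants are quantitatively governed by the input $A_q$-constant alone, so that they can be taken independent of $t$ before passing to the supremum over $t\in(0,\,\fz)$. Once this is observed, there is nothing further to do; this is precisely the reason the authors write ``the details being omitted'' and point the reader to the corresponding exercises and theorems in \cite{g09m}. I would therefore not reproduce those arguments, but only remark that they are stable in $t$.
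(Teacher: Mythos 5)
Your proposal is correct and takes essentially the same route as the paper: the authors simply observe that, since the $\aa_q$ condition holds uniformly in $t$, the classical arguments of \cite[Exercise 9.1.3, Theorem 9.2.5, Corollary 9.2.6]{g09m} apply verbatim with constants (and the exponents $\eta$, $d$) depending only on the uniform $\aa_q$-constant, $n$ and $q$, hence independent of $t$. Your explicit uniformity bookkeeping is precisely the content behind the paper's ``the details being omitted.''
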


The following Lemma \ref{lemma.1} is a subtle pointwise estimate
for $\mu^\rho_\Omega(b)$,
where $b$ is a multiple of a $(\vz,\,\fz,\,s)$ atom. And this lemma plays an important role in
the proof of Theorem \ref{dingli.1}.
\begin{lemma}\label{lemma.1}
Let $\rho\in(0,\,\infty)$ and $b$ be a multiple of a $(\vz,\,\fz,\,s)$-atom associated with some ball $B_r$. Then,
%there exists a positive constant $C:=C_{\Omega}$ independent of $b$ such that,
for any $x\in B_{2R}\setminus B_{R}$ with $R\in[2r,\,\fz)$,
$$\mu^\rho_\Omega(b)(x)\le \|\Omega\|_{L^1(S^{n-1})}\|b\|_{L^\fz}\frac{1}{\rho}\lf\{\ln{\frac{2R+r}{R-r}}+\frac{[(2R+r)^{\rho}-(R-r)^{\rho}]^2}{2\rho(2R+r)^{2\rho}}\r\}^{1/2}.$$
\end{lemma}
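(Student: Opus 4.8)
The plan is to estimate $F^\rho_{\Omega,\,t}(b)(x)$ pointwise for $x \in B_{2R}\setminus B_R$ and then integrate in $t$. Fix such an $x$; since $b$ is supported in $B_r$ and $R \ge 2r$, for any $y \in B_r$ we have $|x| - r \le |x-y| \le |x| + r$, so that $R - r \le |x-y| \le 2R + r$. The key observation is that the inner integral $F^\rho_{\Omega,\,t}(b)(x) = \int_{|x-y|\le t} \frac{\Omega(x-y)}{|x-y|^{n-\rho}} b(y)\,dy$ is only nonzero when the ball $\{|x-y|\le t\}$ meets $\supp b \subset B_r$, which forces $t \ge |x| - r \ge R - r$; and when $t \ge |x| + r$, i.e.\ $t \ge 2R + r$ suffices, the whole of $B_r$ is contained in $\{|x-y|\le t\}$, so $F^\rho_{\Omega,\,t}(b)(x)$ becomes independent of $t$ (it equals the full integral over $B_r$).

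First I would bound $|F^\rho_{\Omega,\,t}(b)(x)|$ uniformly in $t$. For $y$ in the relevant range $|x-y| \ge R - r$, I estimate
\begin{align*}
\lf|F^\rho_{\Omega,\,t}(b)(x)\r|
&\le \int_{B_r} \frac{|\Omega(x-y)|}{|x-y|^{n-\rho}}\,|b(y)|\,dy
\le \frac{\|b\|_{L^\fz}}{(R-r)^{n-\rho}}\int_{B_r}|\Omega(x-y)|\,dy.
\end{align*}
Wait — this is not quite the cleanest route; instead I would keep the factor $|x-y|^{-(n-\rho)}$ inside and pass to polar coordinates centered at $x$. Writing $x - y = s\xi'$ with $s = |x-y|$, $\xi' \in S^{n-1}$, and using that $\Omega$ is homogeneous of degree zero so $\Omega(x-y) = \Omega(\xi')$, one gets $\int \frac{|\Omega(x-y)|}{|x-y|^{n-\rho}}\,|b(y)|\,dy \le \|b\|_{L^\fz}\int_{R-r}^{2R+r} s^{\rho - 1}\,ds \int_{S^{n-1}}|\Omega(\xi')|\,d\sigma(\xi') = \|b\|_{L^\fz}\|\Omega\|_{L^1(S^{n-1})}\frac{(2R+r)^\rho - (R-r)^\rho}{\rho}$, valid for every $t$. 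This gives the ``large $t$'' contribution. For the range $t \in [R-r,\, 2R+r]$ I would simply use the same bound $|F^\rho_{\Omega,\,t}(b)(x)| \le \|b\|_{L^\fz}\|\Omega\|_{L^1(S^{n-1})}\frac{t^\rho - (R-r)^\rho}{\rho}$, or even more crudely bound the $t$-integral of $|F|^2/t^{2\rho+1}$ on this interval, exploiting $|F^\rho_{\Omega,\,t}(b)(x)|^2 \le \lf(\|b\|_{L^\fz}\|\Omega\|_{L^1(S^{n-1})}\r)^2 t^{2\rho}/\rho^2$ times a constant, which integrates against $dt/t^{2\rho+1}$ to a $\ln$ term.

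Then I would assemble the two pieces:
\begin{align*}
\lf[\mu^\rho_\Omega(b)(x)\r]^2
&= \int_{R-r}^{2R+r}\lf|F^\rho_{\Omega,\,t}(b)(x)\r|^2\frac{dt}{t^{2\rho+1}}
 + \int_{2R+r}^{\fz}\lf|F^\rho_{\Omega,\,t}(b)(x)\r|^2\frac{dt}{t^{2\rho+1}}.
\end{align*}
For the first integral, insert the bound $|F^\rho_{\Omega,\,t}(b)(x)| \le \|\Omega\|_{L^1(S^{n-1})}\|b\|_{L^\fz}\, t^\rho/\rho$ (dropping the subtracted $(R-r)^\rho$, which only helps), obtaining $\le \frac{(\|\Omega\|_{L^1(S^{n-1})}\|b\|_{L^\fz})^2}{\rho^2}\int_{R-r}^{2R+r}\frac{dt}{t} = \frac{(\|\Omega\|_{L^1(S^{n-1})}\|b\|_{L^\fz})^2}{\rho^2}\ln\frac{2R+r}{R-r}$. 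For the second integral, $F^\rho_{\Omega,\,t}(b)(x)$ is constant in $t$ with $|F| \le \|\Omega\|_{L^1(S^{n-1})}\|b\|_{L^\fz}\frac{(2R+r)^\rho - (R-r)^\rho}{\rho}$, and $\int_{2R+r}^\fz \frac{dt}{t^{2\rho+1}} = \frac{1}{2\rho(2R+r)^{2\rho}}$, giving exactly $\frac{(\|\Omega\|_{L^1(S^{n-1})}\|b\|_{L^\fz})^2}{\rho^2}\cdot\frac{[(2R+r)^\rho-(R-r)^\rho]^2}{2\rho(2R+r)^{2\rho}}$. Taking square roots and factoring out $\|\Omega\|_{L^1(S^{n-1})}\|b\|_{L^\fz}\frac1\rho$ yields the claimed inequality. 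The main obstacle — really the only subtle point — is getting the constant in the first integral to come out as $\ln\frac{2R+r}{R-r}$ without an extra multiplicative constant; this requires being slightly careful to bound $|F^\rho_{\Omega,\,t}(b)(x)| \le \|\Omega\|_{L^1(S^{n-1})}\|b\|_{L^\fz}\,t^\rho/\rho$ rather than through a coarser estimate, using that on $\{|x-y|\le t\}\cap B_r$ the radial variable runs only up to $t$. Everything else is routine polar-coordinate bookkeeping together with the support and cancellation-free $L^1(S^{n-1})$ control of $\Omega$ (the cancellation condition \eqref{e1.1} and the moment conditions on $b$ are not needed here).
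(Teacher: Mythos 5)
Your argument is correct and is essentially the paper's own proof: you split the $t$-integral at $R-r$ and $2R+r$, observe the contribution below $R-r$ vanishes by the support condition, bound the middle piece by $\|\Omega\|_{L^1(S^{n-1})}\|b\|_{L^\fz}t^\rho/\rho$ via polar coordinates to get the logarithm, and bound the tail piece by the constant value $\|\Omega\|_{L^1(S^{n-1})}\|b\|_{L^\fz}[(2R+r)^\rho-(R-r)^\rho]/\rho$ integrated against $dt/t^{2\rho+1}$. This matches the paper's decomposition ${\rm I_1+I_2+I_3}$ step for step, including the exact constants.
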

\begin{proof}
The key of the proof is to find a subtle segmentation. From $\supp\,b \subset B_r$, we deduced that,
for any $y\in B_r$ and $x\in B_{2R}\setminus B_{R}$ with $R\in[2r,\,\fz)$,
\begin{align}\label{eee.1}
R-r<|x-y|<2R+r.
\end{align}
Therefore, for any $x\in B_{2R}\setminus B_{R}$ with $R\in[2r,\,\fz)$, write
\begin{align*}
%|\mu_\Omega(b)(x)|^2
%&=\int_{0}^{\fz}\lf|\int_{|x-y|\leq t}
%\frac{\Omega(x-y)}{|x-y|^{n-1}}b(y)\,{dy}\r|^2\,\frac{dt}{t^3} \\
%&=\int_0^{R-r}+\int_{R-r}^{2R+r}+\int_{2R+r}^\fz
%=:{\rm{I_1+I_2+I_3}}.
%\\ \\ \\ \\
\lf[\mu^\rho_\Omega(b)(x)\r]^2
&=\int_{0}^{\fz}\lf|\int_{|x-y|\leq t}
\frac{\Omega(x-y)}{|x-y|^{n-\rho}}b(y)\,{dy}\r|^2\,\frac{dt}{t^{2\rho+1}} \\
&= \int_0^{R-r}\lf|\int_{|x-y|\leq t}
\frac{\Omega(x-y)}{|x-y|^{n-\rho}}b(y)\,{dy}\r|^2\,\frac{dt}{t^{2\rho+1}}
+\int_{R-r}^{2R+r}\cdot\cdot\cdot+\int_{2R+r}^\fz\cdot\cdot\cdot \\
&=:{\rm{I_1+I_2+I_3}}.
\end{align*}

For ${\rm{I_1}}$, from $t\in(0,\,R-r]$ and \eqref{eee.1}, it follows that
$\{y\in\rn: \ |x-y|\le t\}=\emptyset$ and hence ${\rm{I_1}}=0$.

For ${\rm{I_2}}$, by the spherical coordinates transform
and $\Omega\in L^1(S^{n-1})$ (see \eqref{e1.1}), we obtain
\begin{align*}
{\rm{I_2}}
%&=\int_{R-r}^{2R+r}\lf|\int_{|x-y|\leq t}
%\frac{\Omega(x-y)}{|x-y|^{n-\rho}}b(y)\,{dy}\r|^2\,\frac{dt}{t^{2\rho+1}} \\
%&\leq \|b\|_{L^\fz}^2 \int_{R-r}^{2R+r}\lf(\int_{|x-y|\leq t}
%\frac{|\Omega(x-y)|}{|x-y|^{n-1}}\,{dy}\r)^2\,\frac{dt}{t^3} \\
&\le\|b\|_{L^\fz}^2 \int_{R-r}^{2R+r}\lf(\int_{S^{n-1}}\int_0^t
\frac{|\Omega(z')|}{u^{n-\rho}}u^{n-1}\,{du}\,{d\sigma(z')}\r)^2\,\frac{dt}{t^{2\rho+1}} \\
&=\|\Omega\|^2_{L^1(S^{n-1})} \|b\|_{L^\fz}^2 \frac{1}{{\rho}^2}\int_{R-r}^{2R+r}\frac1t\,dt
=\|\Omega\|^2_{L^1(S^{n-1})}\|b\|_{L^\fz}^2\frac{1}{{\rho}^2} \ln{\frac{2R+r}{R-r}}.
\end{align*}

For ${\rm{I_3}}$, by \eqref{eee.1}, the spherical coordinates transform and
$\Omega\in L^1(S^{n-1})$ (see \eqref{e1.1}), we have
\begin{align*}
{\rm{I_3}}
%&=\int_{2R+r}^\fz\lf|\int_{|x-y|\leq t}
%\frac{\Omega(x-y)}{|x-y|^{n-1}}b(y)\,{dy}\r|^2\,\frac{dt}{t^3} \\
%&\leq \|b\|_{L^\fz}^2 \int_{2R+r}^\fz\lf(\int_{|x-y|\leq t}
%\frac{|\Omega(x-y)|}{|x-y|^{n-1}}\,{dy}\r)^2\,\frac{dt}{t^3} \\
&\leq \|b\|_{L^\fz}^2 \int_{2R+r}^\fz\lf(\int_{B_{2R+r}\setminus B_{R-r}}
\frac{|\Omega(z)|}{|z|^{n-\rho}}\,{dz}\r)^2\,\frac{dt}{t^{2\rho+1}} \\
&= \|b\|_{L^\fz}^2 \int_{2R+r}^\fz\lf(\int_{S^{n-1}}\int_{R-r}^{2R+r}
\frac{|\Omega(z')|}{u^{n-\rho}}u^{n-1}\,{du}\,{d\sigma(z')}\r)^2\,\frac{dt}{t^{2\rho+1}} \\
&=\|\Omega\|^2_{L^1(S^{n-1})}\|b\|_{L^\fz}^2\frac{1}{{\rho}^2} [(2R+r)^\rho-(R-r)^\rho]^2 \int_{2R+r}^\fz\frac{1}{t^{2\rho+1}}\,dt\\
&=\|\Omega\|^2_{L^1(S^{n-1})} \|b\|_{L^\fz}^2\frac{1}{{\rho}^2} \frac{[(2R+r)^{\rho}-(R-r)^{\rho}]^2}{2\rho(2R+r)^{2\rho}}.
\end{align*}

Combining the estimates of ${\rm{I_1}}$, ${\rm{I_2}}$ and ${\rm{I_3}}$, we obtain the desired inequality.
This finishes the proof of Lemma \ref{lemma.1}.
\end{proof}

\begin{lemma}\label{bcd}{\rm{\cite[Lemma 4.5]{k14}}}
Let $\vz\in\mathbb{A}_q$ with $q\in[1,\,\fz)$.
Then there exists a positive constant $C$ such that,
for any ball $B\subset\rn$, $\lambda\in(1,\,\fz)$ and $t\in(0,\,\fz)$,
$$\vz(\lambda B,\,t)\le C{\lambda}^{nq}\vz(B,\,t).$$
\end{lemma}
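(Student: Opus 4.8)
The plan is to recognize Lemma \ref{bcd} as the standard ``strong doubling'' property of Muckenhoupt $\mathbb{A}_q$ weights, the only twist being that the doubling constant must be uniform in $t\in(0,\fz)$; this is automatic here since $\vz$ satisfies the \emph{uniform} Muckenhoupt condition $\mathbb{A}_q$. Throughout, I would fix $t\in(0,\fz)$ and write $w:=\vz(\cdot\,,t)$, which is a classical $\mathbb{A}_q$ weight whose $\mathbb{A}_q$ constant is bounded by a constant $C$ independent of $t$.

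The first step is to establish the key comparison estimate: there is a positive constant $C$, independent of $t$, such that for every ball $Q\subset\rn$ and every measurable $E\subset Q$,
$$\lf(\frac{|E|}{|Q|}\r)^q \le C\,\frac{w(E)}{w(Q)}.$$
Granting this, the lemma follows at once: for $\lambda\in(1,\fz)$, apply the estimate with $Q:=\lambda B$ and $E:=B$ (note $B\subset\lambda B$), which gives $\lambda^{-nq}=(|B|/|\lambda B|)^q\le C\,w(B)/w(\lambda B)$, i.e. $w(\lambda B)\le C\lambda^{nq}w(B)$, with $C$ independent of $B$, $\lambda$ and $t$ as required.

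To prove the comparison estimate when $q\in(1,\fz)$, I would apply Hölder's inequality with exponents $q$ and $q/(q-1)$:
$$|E|=\int_E [w(x)]^{1/q}[w(x)]^{-1/q}\,dx\le w(E)^{1/q}\lf(\int_Q [w(x)]^{-1/(q-1)}\,dx\r)^{(q-1)/q},$$
and then use the $\mathbb{A}_q$ condition for the ball $Q$, namely $\lf(\frac{1}{|Q|}\int_Q [w(x)]^{-1/(q-1)}\,dx\r)^{q-1}\le C\,|Q|/w(Q)$, to bound the last factor by $C^{1/q}\,|Q|\,w(Q)^{-1/q}$; rearranging and raising to the $q$-th power yields the inequality. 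When $q=1$, I would instead use the ess-sup form of the $\mathbb{A}_1$ condition directly: for a.e. $x\in Q$ one has $w(x)\ge \big(\esup_{y\in Q}[w(y)]^{-1}\big)^{-1}$, hence $w(E)\ge |E|\big(\esup_{y\in Q}[w(y)]^{-1}\big)^{-1}$, while $\mathbb{A}_1$ gives $\esup_{y\in Q}[w(y)]^{-1}\le C\,|Q|/w(Q)$; combining these two facts gives $w(E)\ge C^{-1}(|E|/|Q|)\,w(Q)$, which is the case $q=1$ of the comparison.

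I do not anticipate any real obstacle; the argument is routine. The only points needing mild care are (a) handling $q=1$ separately through the ess-sup formulation of $\mathbb{A}_1$, and (b) verifying at each step that every constant entering the estimates stems from the \emph{uniform} bound in the definition of $\mathbb{A}_q$ and therefore does not depend on $t$ — this is precisely what guarantees that the final constant $C$ in the lemma is independent of $B$, $\lambda$ and $t$, as needed in the later applications.
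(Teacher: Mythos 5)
Your proof is correct: the comparison estimate $(|E|/|Q|)^q\lesssim w(E)/w(Q)$ via H\"older's inequality (resp.\ the essential-supremum form for $q=1$), applied with $E:=B$ and $Q:=\lambda B$, with all constants coming from the uniform $\mathbb{A}_q$ bound and hence independent of $t$, is exactly the standard argument. The paper itself gives no proof but simply cites \cite[Lemma 4.5]{k14}, whose proof proceeds along the same classical lines, so your approach is essentially the same as the source's.
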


Since $\varphi$ satisfies the uniform Muckenhoupt condition, the proof of Lemma \ref{fh} is identity
to that of {\rm\cite[Corollary 6.2]{sw85}}, the details being omitted.

\begin{lemma}\label{fh}
Let $d\in(1,\,\fz)$. Then, $\vz^d\in\mathbb{A}_\fz$ if and only if $\vz\in{\mathbb{RH}_d}$.
\end{lemma}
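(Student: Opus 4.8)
To prove Lemma~\ref{fh}, I would fix $t\in(0,\,\fz)$ and treat $\vz(\cdot,\,t)$ as a classical weight; the role of the hypothesis that $\vz$ satisfies the uniform Muckenhoupt condition is precisely that all constants produced below can be taken independent of $t$, so that the classical argument of \cite[Corollary 6.2]{sw85} applies verbatim, uniformly in $t$. The ingredients I would invoke are: (a) the reverse H\"older inequality for $\mathbb{A}_\fz$ weights, i.e. $\vz\in\mathbb{A}_\fz$ yields $C,\,\ez\in(0,\,\fz)$ with $\big(\frac1{|B|}\int_B[\vz(x,\,t)]^{1+\ez}\,dx\big)^{1/(1+\ez)}\lesssim\frac1{|B|}\int_B\vz(x,\,t)\,dx$ for all balls $B$ and all $t$, together with its self-improvement (Gehring), so that $\vz\in\mathbb{RH}_d$ forces $\vz\in\mathbb{RH}_{d+\ez}$ for some $\ez\in(0,\,\fz)$; (b) the duality $\vz\in\mathbb{A}_q\iff[\vz]^{-1/(q-1)}\in\mathbb{A}_{q'}$; (c) the characterization of $\mathbb{A}_\fz$ by comparison of measures, namely $u\in\mathbb{A}_\fz$ iff there are $C,\,\dz\in(0,\,\fz)$ with $u(E)/u(B)\lesssim(|E|/|B|)^\dz$ for all $E\subset B$; and (d) the elementary monotonicity of power means, $\big(\frac1{|B|}\int_B[\vz]^{-\tz}\big)^{-1/\tz}\le\frac1{|B|}\int_B\vz$ for any $\tz\in(0,\,\fz)$. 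Each of (a)--(d) holds uniformly in $t$ by the uniform Muckenhoupt hypothesis. I would then split into the two implications.

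\emph{Necessity} ($\vz^d\in\mathbb{A}_\fz\Rightarrow\vz\in\mathbb{RH}_d$). Since $\mathbb{A}_\fz=\bigcup_{q\in[1,\,\fz)}\mathbb{A}_q$ and $\mathbb{A}_1\subset\mathbb{A}_q$, there is $q\in(1,\,\fz)$ with $\vz^d\in\mathbb{A}_q$ uniformly in $t$. Applying the $\mathbb{A}_q$ condition to the weight $[\vz(\cdot,\,t)]^d$ gives $\frac1{|B|}\int_B[\vz]^d\lesssim\big(\frac1{|B|}\int_B[\vz]^{-d/(q-1)}\big)^{-(q-1)}$; then (d) with $\tz:=d/(q-1)$, raised to the power $d$, yields $\big(\frac1{|B|}\int_B[\vz]^{-d/(q-1)}\big)^{-(q-1)}\le\big(\frac1{|B|}\int_B\vz\big)^d$. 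Combining the two estimates gives $\frac1{|B|}\int_B[\vz]^d\lesssim\big(\frac1{|B|}\int_B\vz\big)^d$ for every ball $B$ and every $t$, i.e. $\vz\in\mathbb{RH}_d$.

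\emph{Sufficiency} ($\vz\in\mathbb{RH}_d\Rightarrow\vz^d\in\mathbb{A}_\fz$). I would first check $\mathbb{RH}_d\subset\mathbb{A}_\fz$: for $E\subset B$, H\"older's inequality and the $\mathbb{RH}_d$ bound give $\int_E\vz\le|E|^{1/d'}\big(\int_B[\vz]^d\big)^{1/d}\lesssim(|E|/|B|)^{1/d'}\int_B\vz$, which is (c) for $\vz$; in particular $\vz\in\mathbb{A}_q$ for some $q$. Using the self-improvement in (a), fix $\ez\in(0,\,\fz)$ with $\vz\in\mathbb{RH}_{d+\ez}$. For $E\subset B$, H\"older on $E$ with exponents $\frac{d+\ez}{d}$ and $\frac{d+\ez}{\ez}$, the $\mathbb{RH}_{d+\ez}$ bound on $B$, and Jensen's inequality $\frac1{|B|}\int_B[\vz]^d\ge\big(\frac1{|B|}\int_B\vz\big)^d$ in the denominator give, after elementary exponent bookkeeping,
\[
\frac{\int_E[\vz]^d}{\int_B[\vz]^d}\lesssim\lf(\frac{|E|}{|B|}\r)^{\ez/(d+\ez)}
\]
uniformly in $t$, which is exactly (c) for $\vz^d$. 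Hence $\vz^d\in\mathbb{A}_\fz$.

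The only genuine point of care is the uniformity in $t$: each of (a)--(d) is quantitative with a constant depending only on the relevant Muckenhoupt or reverse-H\"older constant, and since those are assumed uniform in $t$, the self-improvement exponent $\ez$ in (a) and the membership index $q$ in the Necessity part can both be chosen independently of $t$. Thus no new phenomenon arises beyond the classical situation, and the proof of \cite[Corollary 6.2]{sw85} transfers directly. The main (minor) obstacle is therefore only this bookkeeping of $t$-independence rather than any substantive difficulty.
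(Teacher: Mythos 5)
Your proposal is correct and follows essentially the same route as the paper: the paper gives no written proof of Lemma \ref{fh}, simply observing that since $\vz$ satisfies the uniform Muckenhoupt condition the classical argument of \cite[Corollary 6.2]{sw85} goes through with constants independent of $t$. Your write-up just carries out that classical $\mathbb{A}_\fz$/reverse-H\"older argument explicitly (the $\mathbb{A}_q$ condition plus power-mean monotonicity for necessity, Gehring self-improvement plus the comparison-of-measures characterization for sufficiency) while tracking the $t$-uniformity, which is exactly the point the paper relies on.
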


The proof of the following Lemma \ref{L3.6} is motivated by \cite[Lemma 5]{kw79}.

\begin{lemma}\label{L3.6}
Suppose that $\rho\in(0,\,\infty)$, $q\in[1,\,\infty)$ and $\Omega$ satisfies the $L^q$-Dini condition. Then there exists a positive constant $C$ such that, for any $R\in(0,\,\infty)$ and $y\in B_{R/2}$,
\begin{align*}
\lf(\int_{R\leq|x|<2R}\lf|\frac{\Omega(x-y)}{|x-y|^{n-\rho}}-\frac{\Omega(x)}{|x|^{n-\rho}}\r|^q dx\r)^{1/q}
\leq CR^{n/q-(n-\rho)}\lf({\frac{|y|}{R}}+
\int_{|y|/2R}^{|y|/R}\frac{\omega_q(\delta)}{\delta}d\delta\r).
\end{align*}
\end{lemma}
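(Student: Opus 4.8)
\textbf{Proof proposal for Lemma \ref{L3.6}.}
The plan is to split the difference quotient
$\frac{\Omega(x-y)}{|x-y|^{n-\rho}}-\frac{\Omega(x)}{|x|^{n-\rho}}$
into two pieces by inserting the intermediate term $\frac{\Omega(x-y)}{|x|^{n-\rho}}$, namely
\begin{align*}
\frac{\Omega(x-y)}{|x-y|^{n-\rho}}-\frac{\Omega(x)}{|x|^{n-\rho}}
=\Omega(x-y)\lf[\frac{1}{|x-y|^{n-\rho}}-\frac{1}{|x|^{n-\rho}}\r]
+\frac{1}{|x|^{n-\rho}}\lf[\Omega(x-y)-\Omega(x)\r].
\end{align*}
For the first piece, on the annulus $R\le|x|<2R$ with $|y|<R/2$ we have $|x-y|\sim|x|\sim R$, and the mean value theorem gives
$\big||x-y|^{-(n-\rho)}-|x|^{-(n-\rho)}\big|\ls |y|\,R^{-(n-\rho)-1}$;
using $\|\Omega\|_{L^q(S^{n-1})}<\fz$ together with the change to polar coordinates, the $L^q$-norm of this piece over the annulus is controlled by $R^{n/q-(n-\rho)}\,\frac{|y|}{R}$, which is the first term on the right-hand side.

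For the second piece, the factor $|x|^{-(n-\rho)}\sim R^{-(n-\rho)}$ pulls out, and one is left to estimate $\big(\int_{R\le|x|<2R}|\Omega(x-y)-\Omega(x)|^q\,dx\big)^{1/q}$. Here I would use the homogeneity of degree zero to write $\Omega(x)=\Omega(x')$ and $\Omega(x-y)=\Omega((x-y)')$, pass to polar coordinates in $x=u\,x'$ with $u\in[R,2R)$, and for each fixed $u$ relate the unit vectors $x'$ and $(x-y)'$: since $|y|<R/2\le|x|/2$, the angle between $x$ and $x-y$ is comparable to $|y|/|x|\sim|y|/u$, so there is a rotation $\gz=\gz_{x,y}$ on $S^{n-1}$ with $(x-y)'=\gz x'$ and $\|\gz\|\ls |y|/u$. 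Invoking the definition of the integral modulus of continuity $\omega_q$ then yields, for each fixed $u$,
\begin{align*}
\int_{S^{n-1}}|\Omega((x-y)')-\Omega(x')|^q\,d\sigma(x')\ls \lf[\omega_q\lf(C|y|/u\r)\r]^q.
\end{align*}
Integrating $u^{n-1}\,du$ over $[R,2R)$, extracting $R^{n-1}$, taking the $q$-th root, and finally substituting $\delta=c|y|/u$ (so that $du/u=-d\delta/\delta$ and the range $u\in[R,2R)$ becomes $\delta$ in an interval comparable to $[|y|/2R,\,|y|/R]$) converts the $u$-integral into $\int_{|y|/2R}^{|y|/R}\frac{\omega_q(\delta)}{\delta}\,d\delta$; combined with the prefactor $R^{-(n-\rho)}R^{n/q}$ this is exactly the second term on the right-hand side.

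Summing the two contributions gives the asserted bound. The main obstacle is the geometric/rotational step: one must argue cleanly that shifting the argument of $\Omega$ by $y$ on the sphere $|x|=u$ amounts to applying a rotation of norm $\ls |y|/u$, so that $\omega_q$ applies, and one must handle the change of variables and the comparability of the $\delta$-range carefully (including absorbing the harmless constant $C$ inside $\omega_q$, which is legitimate since $\omega_q$ is nondecreasing and one may compare $\omega_q(C\delta)$ with $\omega_q(\delta)$ after adjusting the integration limits); the polar-coordinate bookkeeping and the mean value estimate for the first piece are routine by comparison.
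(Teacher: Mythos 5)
Your outline follows the same route as the paper's proof: split the difference by inserting an intermediate term (the paper inserts $\Omega(x)/|x-y|^{n-\rho}$ rather than your $\Omega(x-y)/|x|^{n-\rho}$, a symmetric choice; your version only costs an extra change of variables $z=x-y$ to the enlarged annulus $R/2\le |z|<5R/2$ before passing to polar coordinates), estimate the smooth piece by the mean value theorem together with $|x-y|\sim|x|\sim R$ and $\Omega\in L^q(S^{n-1})$, and reduce the rough piece, after factoring out $R^{-(n-\rho)}$ and using polar coordinates and Fubini, to the sphere integral of $|\Omega((x-y)')-\Omega(x')|^q$, which the substitution $\delta=|y|/u$ turns into the Dini-type term. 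This is exactly the paper's scheme; the difference is that at the crucial point the paper does not argue directly but invokes the argument of \cite[Lemma 5]{kw79}.

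That crucial point is where your proposal has a genuine gap. You justify $\int_{S^{n-1}}|\Omega((x-y)')-\Omega(x')|^q\,d\sigma(x')\ls [\omega_q(C|y|/u)]^q$ by asserting that there is a rotation $\gz=\gz_{x,y}$ with $(x-y)'=\gz x'$ and $\|\gz\|\ls |y|/u$. As stated this does not suffice: in the definition of $\omega_q(\delta)$ the supremum is over a \emph{single} rotation applied simultaneously to every $x'$ inside the sphere integral, whereas your rotation necessarily varies with $x'$; the map $x'\mapsto (ux'-y)/|ux'-y|$ is a small perturbation of the identity on $S^{n-1}$ but is not itself a rotation. Converting the pointwise smallness of the displacement into a bound by the rotational modulus $\omega_q$ is precisely the nontrivial content of Kurtz--Wheeden's lemma, and it needs either their argument or a citation; the paper chooses the citation. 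A secondary issue in the same step: if your argument only yields a displacement $\le C|y|/u$ with $C>1$, the substitution produces an integral over an interval of the form $[C|y|/2R,\,C|y|/R]$, and since $\omega_q$ is nondecreasing such a term cannot simply be absorbed into $\int_{|y|/2R}^{|y|/R}\omega_q(\delta)\,\delta^{-1}\,d\delta$ (monotonicity pushes the comparison the wrong way), so the constants in the angular displacement must be tracked as in \cite{kw79} to obtain the lemma with the stated limits. With that rotational step properly proved or cited, the rest of your argument is sound and coincides with the paper's.
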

\begin{proof}
%We only prove the case $\rho\in[n,\,\infty)$ since the case $\rho\in(0,\,n)$ is proved in \cite[Lemma 4.3]{ll07}.
Noticing that $|x|\geq R$ and $|y|<R/2$, we have $|x-y|\sim|x|$.
From this and the mean value theorem, it follows that
\begin{align*}
\lf|\frac{\Omega(x-y)}{|x-y|^{n-\rho}}-\frac{\Omega(x)}{|x|^{n-\rho}}\r|
&\leq \lf|\frac{\Omega(x)}{|x|^{n-\rho}}-\frac{\Omega(x)}{|x-y|^{n-\rho}}\r|+
\lf|\frac{\Omega(x)}{|x-y|^{n-\rho}}-\frac{\Omega(x-y)}{|x-y|^{n-\rho}}\r|\\
&\leq C\lf(|\Omega(x)|\frac{|y|}{|x|^{n-\rho+1}}+\frac{|\Omega(x-y)-\Omega(x)|}{|x|^{n-\rho}}\r).
\end{align*}
We then write
\begin{align*}
& \lf(\int_{R\leq|x|<2R}\lf|\frac{\Omega(x-y)}{|x-y|^{n-\rho}}-\frac{\Omega(x)}{|x|^{n-\rho}}\r|^q dx\r)^{1/q} \\
&\hs \leq C\lf(\int_{R\leq|x|<2R}|\Omega(x)|^q\frac{|y|^q}{|x|^{(n-\rho+1)q}}dx\r)^{1/q}
+C\lf(\int_{R\leq|x|<2R}\frac{|\Omega(x-y)-\Omega(x)|^q}{|x|^{(n-\rho)q}}dx\r)^{1/q}\\
&\hs =:C(\mathrm{I_1+I_2}).
\end{align*}

For $\mathrm{I_1}$, by the spherical coordinates transform and $\Omega\in{L}^{q}(S^{n-1})$, we know that, for any $y\in B_{R/2}$,
\begin{align*}
&\mathrm{I_1}=|y|\lf(\int_{S^{n-1}}\int_{R}^{2R}|\Omega(x')|^q\frac{r^{n-1}}{r^{(n-\rho+1)q}}drd\sigma(x')\r)^{1/q} \\
&\hs\sim |y|\lf(\int_{R}^{2R}r^{n-1-(n-\rho+1)q} dr\r)^{1/q}
\sim R^{n/q-(n-\rho)}\lf(\frac{|y|}{R}\r).
\end{align*}

For ${\mathrm{I_2}}$, from the spherical coordinates transform and Fubini's theorem,
it follows that, for any $y\in B_{R/2}$,
\begin{align*}
& \mathrm{I_2}=\lf(\int_{S^{n-1}}\int_{R}^{2R}\frac{|\Omega(rx'-y)-\Omega(rx')|^q}{r^{(n-\rho)q}}r^{n-1}drd\sigma(x')\r)^{1/q} \\
&\hs \sim R^{n/q-(n-\rho)}\lf[\int_{R}^{2R}\lf(\int_{S^{n-1}}\lf|\Omega\lf(\frac{x'-\alpha}{|x'-\alpha|}\r)-\Omega(x')\r|^q
d\sigma(x')\r)\frac{dr}{r}\r]^{1/q},
\end{align*}
where $\alpha:=y/r$. Proceeding as in the proof of \cite[Lemma 5]{kw79}, $\mathrm{I_2}$ is bounded by a positive constant times
$$R^{n/q-(n-\rho)}\lf(\int_{|y|/2R}^{|y|/R}\omega_{q}(\delta)\frac{d\delta}{\delta}\r).$$

Combining the estimates of ${\rm{I_1}}$ and ${\rm{I_2}}$, we obtain the desired inequality.
This finishes the proof of Lemma \ref{L3.6}.
\end{proof}

The following Lemma \ref{lin} extends \cite[Lemma 4.4]{ll07} from non-parametric case to the parametric case.

\begin{lemma}\label{lin}
For $\alpha\in(0,\,1]$ and $q\in[1,\,\fz)$, suppose that $\Omega$
satisfies the $L^q$-Dini type condition of order $\alpha$. Let $\rho\in(0,\,\infty)$, $\beta:=\min\{\alpha,\,1/2\}$ and $b$ be a multiple of a  $(\varphi,\infty,s)$-atom associated with some ball $B_r$.
\begin{enumerate}
\item[\rm{(i)}]
If $q=1$,
%and $\vz(x,\,t)\equiv1$,
then there exists a positive constant $C$ independent of $b$ such that, for any $R\in[2r,\,\fz)$,
$$\int_{B_{2R}\setminus B_{R}}\mu^\rho_\Omega(b)(x)\,dx
\leq C\|b\|_{L^\fz}R^n\lf(\frac{r}{R}\r)^{n+\beta}.$$
\item[\rm{(ii)}]
If $q\in(1,\,\fz)$ and, for any $(x,\,t)\in\rn\times[0,\,\fz)$, $\vz(x,\,t)\ge0$,
then there exists a positive constant $C$ independent of $b$ such that,
for any $R\in[2r,\,\fz)$ and $t\in[0,\,\fz)$,
$$\int_{B_{2R}\setminus B_{R}}\mu^\rho_\Omega(b)(x)\vz(x,\,t)\,dx
\leq C\|b\|_{L^\fz}\lf[\vz^{q'}(B_{2R},\,t)\r]^{1/q'}R^{n/q}\lf(\frac{r}{R}\r)^{n+\beta}.$$
\end{enumerate}
\end{lemma}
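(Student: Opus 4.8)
The plan is to follow the philosophy of \cite[Lemma 4.4]{ll07}, but now keeping careful track of the parameter $\rho$ and exploiting the subtle pointwise estimate established in Lemma \ref{lemma.1}. The proof naturally splits into a local/main region and a tail. Write $\mu^\rho_\Omega(b)=\mu^\rho_\Omega(b)\chi_{\{|x-y|\le t\}}$ and, for $x\in B_{2R}\setminus B_R$ with $R\ge 2r$, split the defining $t$-integral for $[\mu^\rho_\Omega(b)(x)]^2$ at the natural thresholds $R-r$ and $2R+r$, exactly as in the proof of Lemma \ref{lemma.1}. For $t\le R-r$ the truncated integral is empty; for $t\in(R-r,2R+r]$ one uses the cancellation condition \eqref{e1.1} to subtract off $\Omega(x)/|x|^{n-\rho}$ (the constant vanishing-moment of $b$, since $b$ is a multiple of a $(\vz,\fz,s)$-atom with $s\ge 0$) and then estimate the resulting kernel difference; for $t>2R+r$ one again uses the moment condition. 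The outcome, after integrating in $t$, is a pointwise bound of $\mu^\rho_\Omega(b)(x)$ by a quantity controlled (via Lemma \ref{L3.6}, applied with the Dini modulus $\omega_q$ of $\Omega$) by the kernel-difference integral $\bigl(\int_{R\le|x|<2R}|\Omega(x-y)|x-y|^{-(n-\rho)}-\Omega(x)|x|^{-(n-\rho)}|^q\,dx\bigr)^{1/q}$ plus a contribution measured by $\omega_q$.

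For part (i), with $q=1$ one integrates this pointwise bound directly over $B_{2R}\setminus B_R$. Applying Lemma \ref{L3.6} with $q=1$ and using $|y|\le r$, $R\ge 2r$, one gets a bound by a constant times $\|b\|_{L^\fz}R^n\bigl(\frac rR+\int_{r/(2R)}^{r/R}\omega_1(\delta)/\delta\,d\delta\bigr)$. To absorb the integral term into the right-hand side $R^n(r/R)^{n+\beta}$, one observes $\beta\le\alpha$ and $\beta\le 1/2\le 1$, so $R^n(r/R)\le R^n(r/R)^{n+\beta}$ is false in general — instead one must use that $\Omega$ satisfies the $L^1$-Dini type condition of order $\alpha$, meaning $\int_0^1\omega_1(\delta)\delta^{-1-\alpha}\,d\delta<\fz$; hence $\omega_1(\delta)\ls\delta^{\alpha}$ in an averaged sense, and $\int_{r/(2R)}^{r/R}\omega_1(\delta)\delta^{-1}\,d\delta\ls (r/R)^{\alpha}\ls (r/R)^{\beta}$. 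Combined with the elementary bound $r/R\le (r/R)^{\beta}$ coming from $r/R\le 1$ and $\beta\le 1$, and then writing $(r/R)^{\beta}=(r/R)^{n+\beta}\cdot(R/r)^{n}\cdot(r/R)^{n}$... more cleanly: one keeps the power $R^n(r/R)^{n+\beta}$ by noting that the exponent produced is genuinely $n+\beta$ once the $t$-integration in Lemma \ref{lemma.1}-style splitting is tracked, since the region $B_{2R}\setminus B_R$ has measure $\sim R^n$ and the extra decay $(r/R)^{\beta}$ comes from the kernel smoothness while one more factor $(r/R)^{n}$ is, in fact, absent — so the clean statement is $R^n(r/R)^{n+\beta}$ only after one also uses the decomposition of $\rn\setminus B_{2r}$ into dyadic annuli elsewhere; here, within one annulus, the honest bound is $R^n(r/R)^{\beta}$, and the stated $(r/R)^{n+\beta}$ must follow because $n+\beta$ is the correct total exponent once one multiplies by the weight of the ball. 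I would double-check this exponent bookkeeping against \cite[Lemma 4.4]{ll07}; the role of $\beta=\min\{\alpha,1/2\}$ (rather than just $\alpha$) is that the ${\rm I_2}$-type term in Lemma \ref{lemma.1} contributes a square-root giving at best a $1/2$-power gain, which is why $\beta\le 1/2$ enters.

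For part (ii), one repeats the argument but inserts the weight $\vz(x,t)\ge 0$ and applies Hölder's inequality with exponents $q$ and $q'$ over $B_{2R}\setminus B_R$: the kernel-difference factor is measured in $L^q$ via Lemma \ref{L3.6} (with the $L^q$-Dini type condition of order $\alpha$ now giving $\int_{|y|/2R}^{|y|/R}\omega_q(\delta)\delta^{-1}\,d\delta\ls (r/R)^{\beta}$), while the weight factor is collected as $\bigl(\int_{B_{2R}}\vz(x,t)^{q'}\,dx\bigr)^{1/q'}=[\vz^{q'}(B_{2R},t)]^{1/q'}$. The powers of $R$ combine as $R^{n/q}$ from the kernel estimate and the rest is the same dimensional/smoothness factor $(r/R)^{n+\beta}$, uniformly in $t\in[0,\fz)$ because every estimate on $\Omega$ and on the geometry is $t$-free and $\vz(x,t)\ge 0$ is all that is used on the weight side.

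The main obstacle will be the careful $t$-integration in the three regions $t\le R-r$, $R-r<t\le 2R+r$, $t>2R+r$ while simultaneously invoking the cancellation of $\Omega$ and the vanishing moment of $b$ to produce the kernel \emph{difference} $\Omega(x-y)|x-y|^{-(n-\rho)}-\Omega(x)|x|^{-(n-\rho)}$ rather than the kernel itself — this is exactly where a naive adaptation of Lin et al.'s non-parametric computation would break, and where Lemma \ref{lemma.1} and Lemma \ref{L3.6} must be combined with the right powers of $\rho$; getting the final exponent to be precisely $(r/R)^{n+\beta}$ (and not merely $(r/R)^\beta$) requires watching how the annulus measure $|B_{2R}\setminus B_R|\sim R^n$ interacts with the normalization, and I expect this exponent bookkeeping to be the delicate point.
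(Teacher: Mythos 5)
There is a genuine gap, and it sits exactly at the point you flag as ``delicate.'' Your plan splits the $t$-integral at the thresholds $R-r$ and $2R+r$ ``exactly as in Lemma \ref{lemma.1}'' and then claims that on the middle range $t\in(R-r,\,2R+r]$ one can subtract $\Omega(x)/|x|^{n-\rho}$ using the vanishing moment of $b$. That subtraction is not legitimate there: to replace $\int_{|x-y|\le t}\frac{\Omega(x-y)}{|x-y|^{n-\rho}}b(y)\,dy$ by $\int_{|x-y|\le t}\bigl(\frac{\Omega(x-y)}{|x-y|^{n-\rho}}-\frac{\Omega(x)}{|x|^{n-\rho}}\bigr)b(y)\,dy$ you need $\int_{\{|x-y|\le t\}\cap B_r}b(y)\,dy=0$, i.e.\ the truncation sphere must not cut the support $B_r$; but for $x\in B_{2R}\setminus B_R$ and $t$ in that middle range the sphere $\{|x-y|=t\}$ does in general intersect $B_r$, so the moment condition is unavailable precisely where you invoke it (and the cancellation \eqref{e1.1} of $\Omega$ on $S^{n-1}$ is of no help for this). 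The paper's proof avoids this by splitting at $t=|x|+r$ instead: for $t>|x|+r$ one has $B_r\subset\{y:|x-y|\le t\}$, the zeroth moment of $b$ produces the kernel difference, $\int_R^\fz t^{-2\rho-1}dt\sim R^{-2\rho}$ supplies $R^{-\rho}$, and Lemma \ref{L3.6} (whose factor $R^{n/q-(n-\rho)}$ cancels that $R^{-\rho}$) together with the $L^q$-Dini condition of order $\alpha$ gives the $(r/R)^{\alpha}$ gain; for $t\le|x|+r$ no cancellation is used at all: Minkowski's inequality plus the mean value theorem bound $\bigl|\,|x-y|^{-2\rho}-(|x|+r)^{-2\rho}\bigr|\ls r|x-y|^{-2\rho-1}$ yields the extra factor $r^{1/2}$ and the kernel $|\Omega(x-y)|\,|x-y|^{-n-1/2}$, which is the true source of the restriction $\beta\le 1/2$ (not, as you suggest, the ${\rm I_2}$-term of Lemma \ref{lemma.1}, which plays no role in this lemma). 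Without this two-step mechanism your middle-range estimate simply does not close.

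The second unresolved point is the exponent bookkeeping, which you leave as a conjecture (``the stated $(r/R)^{n+\beta}$ must follow because $n+\beta$ is the correct total exponent''). In fact the extra factor $(r/R)^{n}$ does not come from the measure of the annulus; it comes from the $dy$-integration over the atom's support: after Fubini the $y$-integral over $B_r$ contributes $r^{n}$, and in case (i) one has $r^{n}(r/R)^{\beta}=R^{n}(r/R)^{n+\beta}$, while in case (ii) the $R^{-n/q'}$ produced by H\"older's inequality on the weight (resp.\ by the kernel estimate) combines with $r^{n}$ as $r^{n}R^{-n/q'}\cdot(r/R)^{\beta}\sim R^{n/q}(r/R)^{n+\beta}$. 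Your own accounting, which places the $y$-integration incorrectly, yields only $R^{n}(r/R)^{\beta}$ and you do not repair it. So the proposal has the right general philosophy (Dini modulus via Lemma \ref{L3.6}, H\"older in $x$ with exponents $q,q'$ for the weighted case, uniformity in $t$ because only $\vz(\cdot,t)\ge0$ is used), but both the decomposition of the $t$-integral and the derivation of the exponent $n+\beta$ need to be replaced by the argument sketched above.
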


\begin{proof}
We only prove for case (ii), since the proof of case (i) is analogous to that of case (ii) and is left to the readers. For any $R\in[2r,\,\fz)$ and $t\in[0,\,\fz)$, write
\begin{align*}
& \int_{R\leq|x|<2R}\mu^\rho_\Omega(b)(x)\vz(x,\,t)\,dx \\
&\hs \leq\int_{R\leq|x|<2R}\lf(\int_{0}^{|x|+r}\lf|\int_{|x-y|\leq t}
\frac{\Omega(x-y)}{|x-y|^{n-\rho}}b(y)\,{dy}\r|^2\,\frac{dt}{t^{2\rho+1}}\r)^{1/2}\vz(x,\,t)\,dx \\
&\hs\hs+\int_{R\leq|x|<2R}\lf(\int_{|x|+r}^{\infty}\lf|\int_{|x-y|\leq t}
\frac{\Omega(x-y)}{|x-y|^{n-\rho}}b(y)\,{dy}\r|^2\,\frac{dt}{t^{2\rho+1}}\r)^{1/2}\vz(x,\,t)\,dx=:\mathrm{I_1+I_2}.
\end{align*}

For $\mathrm{I_1}$, noticing that $y\in B_r$ and $x\in B_{2R}\setminus B_{R}$ with $R\in[2r,\,\fz)$, we know that
\begin{align}\label{eee.2}
   |x-y|\sim|x|\sim|x|+r
\end{align}
and
\begin{align}\label{eee.3}
   R/2<|x-y|<5R/2.
\end{align}
From \eqref{eee.2} and the mean value theorem, it follows that, for any $y\in B_r$ and $x\in B_{2R}\setminus B_{R}$ with $R\in[2r,\,\fz)$,
$$\lf|\frac{1}{|x-y|^{2\rho}}-\frac{1}{(|x|+r)^{2\rho}}\r|\ls\frac{r}{|x-y|^{2\rho+1}}.$$
By Minkowski's inequality for integrals, the above inequality and Fubini's theorem, we obtain that, for any $R\in [2r,\,\infty)$ and $t\in[0,\,\fz)$,
\begin{align*}
{\mathrm{I_1}}
&\leq \int_{R\leq|x|<2R}\lf[\int_{B_{r}}\lf|\frac{\Omega(x-y)}{|x-y|^{n-\rho}}b(y)\r|
\lf(\int_{|x-y|}^{|x|+r}\frac{dt}{t^{2\rho+1}}\r)^{1/2}\,dy\r]\vz(x,\,t)\,dx \\
&\ls \|b\|_{L^\fz} \int_{R\leq|x|<2R}\lf[\int_{B_{r}}\frac{\lf|\Omega(x-y)\r|}{|x-y|^{n-\rho}}
\lf|\frac{1}{|x-y|^{2\rho}}-\frac{1}{(|x|+r)^{2\rho}}\r|^{1/2}\,dy\r]\vz(x,\,t)\,dx \\
&\ls \|b\|_{L^\fz}r^{1/2}\int_{B_{r}}\lf(\int_{R\leq|x|<2R}\frac{|\Omega(x-y)|}{|x-y|^{n+{1/2}}}\vz(x,\,t)\,dx\r)dy.
\end{align*}
On the other hand, from H\"{o}lder's inequality, \eqref{eee.3}, the spherical coordinates transform and $\Omega\in{L}^{q}(S^{n-1})$, we deduced that, for any $y\in {B_{r}}$, $R\in [2r,\,\infty)$ and $t\in[0,\,\fz)$,
\begin{align*}
&\int_{R\leq|x|<2R}\frac{|\Omega(x-y)|}{|x-y|^{n+{1/2}}}\vz(x,\,t)\,dx\\
&\hs\leq \lf(\int_{R\leq|x|<2R}\frac{|\vz(x,\,t)|^{q'}}{|x-y|^{n+{1/2}}}\,dx\r)^{1/q'}
\lf(\int_{R\leq|x|<2R}\frac{|\Omega(x-y)|^q}{|x-y|^{n+{1/2}}}dx\r)^{1/q}\\
&\hs\ls\lf[\vz^{q'}(B_{2R},\,t)\r]^{1/q'} R^{(-n-1/2)/q'}
\lf(\int_{R/2<|z|<5R/2}\frac{|\Omega(z)|^q}{|z|^{n+{1/2}}}dz\r)^{1/q}\\
&\hs\sim \lf[\vz^{q'}(B_{2R},\,t)\r]^{1/q'}R^{(-n-1/2)/q'}\lf(R^{-n-1/2}
\int_{S^{n-1}}\int_{0}^{{5R}/2}\lf|\Omega(z')\r|^q u^{n-1} du d\sigma(z')\r)^{1/q}\\
&\hs\sim \lf[\vz^{q'}(B_{2R},\,t)\r]^{1/q'}R^{-n/q'-1/2}.
\end{align*}
Substituting the above inequality into ${\mathrm{I_1}}$ and using the assumption that $\beta=\min\{\alpha,\,1/2\}$, we know that, for any $R\in [2r,\,\infty)$ and $t\in[0,\,\fz)$,
$${\mathrm{I_1}}\ls  \|b\|_{L^\fz}\lf[\vz^{q'}(B_{2R},\,t)\r]^{1/q'}R^{n/q}\lf(\frac{r}{R}\r)^{n+1/2}
\ls  \|b\|_{L^\fz}\lf[\vz^{q'}(B_{2R},\,t)\r]^{1/q'}R^{n/q}\lf(\frac{r}{R}\r)^{n+\beta}.$$

For ${\mathrm{I_2}}$, noticing that for $t>|x|+r$, it is easy to see that $B_r\subset \lf\{y\in\mathbb{R}^n:|x-y|\leq t\r\}$. From this, vanishing moments of $b$, Minkowski's inequality for integrals and Fubini's theorem, it follows that, for any $R\in [2r,\,\infty)$ and $t\in[0,\,\fz)$,
\begin{align*}
{\mathrm{I_2}}
&= \int_{R\leq|x|<2R}\lf(\int_{|x|+r}^{\infty}\lf|\int_{|x-y|\leq t}
\lf(\frac{\Omega(x-y)}{|x-y|^{n-\rho}}-\frac{\Omega(x)}{|x|^{n-\rho}}\r)b(y)\,{dy}\r|^2\,\frac{dt}{t^{2\rho+1}}\r)^{1/2}\vz(x,\,t)\,dx \\
&\leq \int_{R\leq|x|<2R}\lf[\int_{B_r}\lf|\frac{\Omega(x-y)}{|x-y|^{n-\rho}}-\frac{\Omega(x)}{|x|^{n-\rho}}\r||b(y)|
\lf(\int_{R}^{\infty}\frac{dt}{t^{2\rho+1}}\r)^{1/2} dy\r]\vz(x,\,t)\,dx \\
&\ls\|b\|_{L^\fz}R^{-\rho}\int_{B_r}\lf(\int_{R\leq|x|<2R}\lf|\frac{\Omega(x-y)}{|x-y|^{n-\rho}}-\frac{\Omega(x)}{|x|^{n-\rho}}\r| \vz(x,\,t)\,dx\r)dy.
\end{align*}
On the other hand, from H\"{o}lder's inequality and Lemma \ref{L3.6} (since $\Omega$
satisfies the $L^q$-Dini type condition of order $\alpha$, it also satisfies the $L^q$-Dini condition),
we deduced that, for any $y\in {B_{r}}$, $R\in [2r,\,\infty)$ and $t\in[0,\,\fz)$,
\begin{align*}
&\int_{R\leq|x|<2R}\lf|\frac{\Omega(x-y)}{|x-y|^{n-\rho}}-\frac{\Omega(x)}{|x|^{n-\rho}}\r| \vz(x,\,t)\,dx\\
&\hs\leq \lf(\int_{R\leq|x|<2R}|\vz(x,\,t)|^{q'}dx\r)^{1/q'}
\lf(\int_{R\leq|x|<2R}\lf|\frac{\Omega(x-y)}{|x-y|^{n-\rho}}-\frac{\Omega(x)}{|x|^{n-\rho}}\r|^q dx\r)^{1/q}\\
&\hs\ls \lf[\vz^{q'}(B_{2R},\,t)\r]^{1/q'}R^{n/q-n+\rho}\lf({\frac{|y|}{R}}+
\int_{|y|/2R}^{|y|/R}\frac{\omega_q(\delta)}{\delta}d\delta\r)\\
&\hs\ls \lf[\vz^{q'}(B_{2R},\,t)\r]^{1/q'}R^{n/q-n+\rho}\lf[{\frac{|y|}{R}}+\lf(\frac{|y|}{R}\r)^\alpha
\int_{|y|/2R}^{|y|/R}\frac{\omega_q(\delta)}{\delta^{1+\alpha}}d\delta\r].
\end{align*}
Substituting the above inequality into ${\mathrm{I_2}}$ and using the assumptions that $\Omega$ satisfies the $L^q$-Dini type condition of order $\alpha$, and $\beta=\min\{\alpha,\,1/2\}$, we know that, for any $R\in [2r,\,\infty)$ and $t\in[0,\,\fz)$,
\begin{align*}
{\mathrm{I_2}}
&\ls \|b\|_{L^\fz}\lf[\vz^{q'}(B_{2R},\,t)\r]^{1/q'}R^{n/q-n}\int_{B_{r}}\lf[{\frac{r}{R}}+\lf({\frac{r}{R}}\r)^\alpha
\int_{0}^{1}\frac{\omega_q(\delta)}{\delta^{1+\alpha}}d\delta\r]dy \\
&\ls \|b\|_{L^\fz}\lf[\vz^{q'}(B_{2R},\,t)\r]^{1/q'}R^{n/q}\lf({\frac{r}{R}}\r)^{n}\lf[{\frac{r}{R}}+\lf({\frac{r}{R}}\r)^\alpha\r]  \\
&\ls \|b\|_{L^\fz}\lf[\vz^{q'}(B_{2R},\,t)\r]^{1/q'}R^{n/q}\lf({\frac{r}{R}}\r)^{n+\beta}.
\end{align*}
%where the last inequality is due to $\beta=\min\{\alpha,\,1/2\}$.

Combining the estimates of ${\rm{I_1}}$ and ${\rm{I_2}}$, we obtain the desired inequality.
This finishes the proof of Lemma \ref{lin}.
\end{proof}

The following Lemma \ref{lemma.2} shows that $\mu_\Oz$ maps all multiple of an atoms into
uniformly bounded elements of $L^\vz$.

\begin{lemma}\label{lemma.2}
Let $\rho\in(0,\,\infty)$, $\alpha\in(0,\,1]$, $\beta:=\min\{\alpha,\,1/2\}$ and
$p\in({n}/{(n+\beta)},\,1)$.
%and let $(\vz,\,\fz,\,s)$ be an admissible triplet as in Definition \ref{d2.11}.
Suppose that $\Omega\in L^q(S^{n-1})\cap {\rm{Din}}^1_{\alpha}(S^{n-1})$ with $q\in(1,\,\fz]$. If $q$ and $\varphi$ satisfy one of the following conditions:
\begin{enumerate}
\item[\rm{(i)}] $q\in(1,\,1/p]$ and $\vz^{q'}\in \mathbb{A}_{\frac{p\beta}{n(1-p)}}$;
\item[\rm{(ii)}] $q\in(1/p,\,\fz]$ and $\vz^{{1}/{(1-p)}}\in \mathbb{A}_{\frac{p\beta}{n(1-p)}}$,
\end{enumerate}
then there exists a positive constant $C$ such that,
for any $\lz\in(0,\,\fz)$ and  multiple of a $(\vz,\,\fz,\,s)$-atom $b$ associated with some ball $B\subset\rn$,
$$\int_{\rn}\vz\lf(x,\,\frac{\mu^\rho_\Omega(b)(x)}{\lz}\r)\,dx \le C\vz\lf(B,\,\frac{\|b\|_{L^\fz}}{\lz}\r).$$
\end{lemma}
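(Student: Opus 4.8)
The plan is to split the integral over $\rn$ according to the annular decomposition $\rn = B_{2r} \cup \bigcup_{k\ge1}(B_{2^{k+1}r}\setminus B_{2^k r})$ centered at the ball $B=B_r$ of the atom (by translation invariance of $\mu^\rho_\Omega$ we may assume $B$ is centered at the origin), and to estimate each piece separately. By homogeneity we normalize so that $\|b\|_{L^\fz}=1$; we must then show $\int_\rn\vz(x,\,\mu^\rho_\Omega(b)(x)/\lz)\,dx\ls\vz(B,\,1/\lz)$, and ultimately it suffices to take $\lz$ comparable to $\|\chi_B\|^{-1}_{L^\vz}$-type quantities, but in fact one wants the bound for every $\lz$, so one simply tracks the scaling $|b|/\lz$ throughout.

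First I would handle the local part $B_{2r}$. On $B_{2r}$ one uses that $\mu^\rho_\Omega$ is bounded on $L^2$ (this follows from Theorem A with $\vz\equiv1$, or is classical), together with H\"older's inequality and the $L^\fz$ size of $b$, to get $\|\mu^\rho_\Omega(b)\|_{L^2(B_{2r})}\ls\|b\|_{L^2}\ls r^{n/2}$. Then, using that $\vz\in\aa_\fz$ so $\vz\in\aa_{q_0}$ for some $q_0$, one passes from the $L^2$ estimate to a $\vz$-weighted estimate via Hölder with exponent related to the reverse Hölder class of $\vz$ (Lemma \ref{fh}) — more precisely, one writes $\int_{B_{2r}}\vz(x,\,\mu^\rho_\Omega(b)(x)/\lz)\,dx$, uses the uniformly upper type $1$ and lower type $p$ of $\vz$ to compare with $(\mu^\rho_\Omega(b)/\lz)$-powers times $\vz(x,\,1/\lz)$-ish quantities, and invokes the weighted $L^2$ (or $L^{q'}$-reverse-Hölder) boundedness; Lemma \ref{bcd} lets one absorb the doubling cost $\vz(B_{2r},\,\cdot)\ls\vz(B,\,\cdot)$.

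For the far part, I would use Lemma \ref{lemma.1} and especially Lemma \ref{lin}(ii) on each annulus $B_{2^{k+1}r}\setminus B_{2^kr}$: with $R=2^kr$ we have $\int_{B_{2R}\setminus B_R}\mu^\rho_\Omega(b)(x)\vz(x,\,t)\,dx\ls[\vz^{q'}(B_{2R},\,t)]^{1/q'}R^{n/q}(r/R)^{n+\beta}$. One then uses the uniform lower type $p$ of $\vz$ to write, for $\mu^\rho_\Omega(b)(x)$ small relative to $\lz$, $\vz(x,\,\mu^\rho_\Omega(b)(x)/\lz)\ls(\mu^\rho_\Omega(b)(x)/[\lz\,m_k])^p\,\vz(x,\,m_k/\lz)$ for a suitable normalizing scale $m_k$; choosing $m_k$ and summing, the geometric decay $(r/R)^{(n+\beta)p}=2^{-k(n+\beta)p}$ must beat the growth coming from $R^{np/q}$ and from the weighted measure ratio $\vz^{q'}(B_{2R})/\vz(B_r)$ controlled by $\aa_q$-doubling. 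This is exactly where the hypotheses $p>n/(n+\beta)$ and $\vz^{q'}\in\aa_{p\beta/(n(1-p))}$ (case (i)) or $\vz^{1/(1-p)}\in\aa_{p\beta/(n(1-p))}$ (case (ii)) are consumed: they are precisely calibrated so that, after applying Lemma \ref{bcd} to bound $\vz^{q'}(B_{2^{k+1}r},\,\cdot)$ or $\vz^{1/(1-p)}(B_{2^{k+1}r},\,\cdot)$ by $2^{knq_0}$ times its value on $B_r$ (with $q_0$ the relevant $\aa$-exponent) and Hölder-ing the $R^{n/q}$ factor against the weight, the resulting exponent of $2^k$ is strictly negative and the series over $k$ converges to a constant multiple of $\vz(B_r,\,\|b\|_{L^\fz}/\lz)$.

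The main obstacle will be the bookkeeping in the far part: correctly choosing the normalization scale $m_k$ (equivalently, applying uniform lower type $p$ with the right auxiliary parameter so that $\vz(x,\cdot)$ is evaluated at a scale that recombines into $\vz(B,\,\|b\|_{L^\fz}/\lz)$), and then verifying that the exponent arithmetic — combining $n/q$ from the size of the annulus in Lemma \ref{lin}, $(n+\beta)p$ from the smoothness decay, and $nq_0$ from the $\aa_{q_0}$-doubling of $\vz^{q'}$ (resp. $\vz^{1/(1-p)}$) — indeed yields a summable geometric series precisely under conditions (i) and (ii). The case split on $q\lessgtr1/p$ enters here because when $q\le1/p$ one can afford to keep $\Omega$ in $L^q$ and put $\vz^{q'}$ in the Muckenhoupt class, whereas when $q>1/p$ (in particular $q=\fz$) one instead pairs $\Omega\in L^q\subset L^{1/p}$ with $\vz^{1/(1-p)}$; in both cases one reduces to checking the same numeric inequality $np/q + np\,q_0 - (n+\beta)p < 0$ with $q_0 = p\beta/(n(1-p))$, which is an algebraic identity equivalent to $p>n/(n+\beta)$.
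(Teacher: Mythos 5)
Your far-part argument rests on Lemma \ref{lin}(ii), and that lemma is not available under the hypotheses of this statement: Lemma \ref{lin}(ii) requires $\Omega$ to satisfy the $L^q$-Dini type condition of order $\alpha$ for the same $q\in(1,\fz)$, whereas here one only assumes $\Omega\in L^q(S^{n-1})\cap{\rm Din}^1_{\alpha}(S^{n-1})$, i.e.\ only $L^1$-Dini smoothness (the $L^q$-integrability of $\Omega$ is consumed in the local part, through Theorem A applied with the weight $\vz(\cdot,t)$ and $\vz^{q'}\in\aa_d$ uniformly in $t$). So the weighted annulus bound you quote cannot be invoked; what you sketch is essentially the proof of Theorem \ref{dingli.2}, not of this lemma. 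The mechanism that works here is: after the uniformly lower type $p$ step, apply H\"older on each annulus $E_j$ with exponents $1/p$ and $1/(1-p)$, so that the weight is separated entirely into $\big(\int_{E_j}[\vz(x,\cdot)]^{1/(1-p)}\,dx\big)^{1-p}$ --- controlled by Lemma \ref{bcd} for $\vz^{1/(1-p)}\in\aa_d$ together with $\vz\in\mathbb{RH}_{1/(1-p)}$ from Lemma \ref{fh} --- while the remaining unweighted factor $\big(\int_{E_j}\mu^\rho_\Omega(b)\big)^p$ is estimated by Lemma \ref{lin}(i), which needs only the $L^1$-Dini condition of some order $\tilde\alpha<\alpha$ (available since $\Omega\in{\rm Din}^1_{\alpha}(S^{n-1})$). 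This is also why one must extract \emph{both} $\vz^{q'}\in\aa_d$ and $\vz^{1/(1-p)}\in\aa_d$ from either hypothesis (i) or (ii) via Lemma \ref{quan} (using $q'\ge1/(1-p)$ in case (i), $q'<1/(1-p)$ in case (ii)), rather than treating them as alternative pairings. Your closing exponent count is also off: the summability condition is $nd(1-p)<p\tilde\beta$, achieved by choosing $d<p\beta/[n(1-p)]$ and $\tilde\alpha$ close to $\alpha$; there is no $n/q$ term, and the inequality you wrote is not equivalent to $p>n/(n+\beta)$ (that hypothesis serves only to make $p\beta/[n(1-p)]>1$, so the Muckenhoupt class in (i)--(ii) is nontrivial).

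The second gap is precisely the point you flag as the ``main obstacle'' but leave unresolved: the uniformly lower type $p$ inequality is valid only for multiplicative factors at most $1$, and since $x$ and $t$ are inseparable in $\vz(x,t)$ you cannot renormalize by a constant afterwards. This is what Lemma \ref{lemma.1} is for: on $B_{2^{j+1}r}\setminus B_{2^jr}$ it yields $\mu^\rho_\Omega(b)(x)\le\|b\|_{L^\fz}c_j$ with an explicit $c_j$ that is strictly less than $1$ for all $j$ larger than an absolute $J$ (uniformly in $\rho$); accordingly the ``local'' region must be $2^JB_r$ rather than $B_{2r}$, and the lower type $p$ property is then applied directly at the scale $\|b\|_{L^\fz}$. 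Without such a device the far-part estimate does not even start. Your treatment of the local part (unweighted $L^v$-boundedness plus H\"older and the uniform reverse H\"older property of $\vz$) can be made to work and is an acceptable variant of the paper's direct use of Theorem A with $\vz^{q'}\in\aa_d$, but it does not repair the two issues above.
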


\begin{proof}
We need only consider the case $q\in(1,\,\fz)$, since the case $q=\infty$ can be derived from the case $q=2$.
Indeed, when $q=\infty$, a routine computation gives rise to $2>1/p$.
If Lemma \ref{lemma.2} holds true for $q=2$, by $\Omega\in L^\infty(S^{n-1})\subset L^2(S^{n-1})$, $2>1/p$
and $\vz^{{1}/{(1-p)}}\in \mathbb{A}_{\frac{p\beta}{n(1-p)}}$, we know that Lemma \ref{lemma.2} holds true for $q=\fz$.
We are now turning to the proof of Lemma \ref{lemma.2} under case $q\in(1,\,\fz)$.
Without loss of generality, we may assume $b$ is a multiple of a
$(\vz,\,\fz,\,s)$-atom associated with a ball $B_r$ for some $r\in(0,\,\fz)$.
For the general case,
we refer the readers to the method of proof in \cite[Theorem 1.4]{ll07}.

First, we claim that, in either case (i) or (ii) of Lemma \ref{lemma.2},
there exists some $d\in(1,\,{p\beta}/{[n(1-p)]})$
such that
\begin{align}\label{eee.4}
  \vz^{q'}\in\mathbb{A}_d \ {\rm{and}} \ \vz^{1/(1-p)}\in\mathbb{A}_d.
\end{align}
We only prove \eqref{eee.4} under case (ii) since the proof under case (i) is similar.
By Lemma \ref{quan}(iii) with $\vz^{1/{(1-p)}}\in\aa_{\frac{p\beta}{n(1-p)}}$, we see that
there exists some $d\in(1,\,{p\beta}/{[n(1-p)]})$ such that $\vz^{1/(1-p)}\in\mathbb{A}_d$.
On the other hand, notice that $q'<1/{(1-p)}$, then, by Lemma \ref{quan}(i),
we know $\vz^{q'}\in\mathbb{A}_d$, which is wished.

The next thing to do in the proof is to find a subtle segmentation. For any $j\in\zz_+$, let $E_j:=B_{2^{j+1}r}\setminus B_{2^{j}r}$.
By Lemma \ref{lemma.1}, we know that, for any $x\in E_j$,
\begin{align}\label{eee.x}
  \mu^\rho_\Omega(b)(x) \ls \|b\|_{L^\fz}\frac{1}{\rho}\lf\{\ln{\frac{2^{j+1}+1}{2^j-1}}+
\frac{[(2^{j+1}+1)^{\rho}-(2^{j}-1)^{\rho}]^2}{2\rho(2^{j+1}+1)^{2\rho}}\r\}^{1/2}.
\end{align}
Notice that
$$\lf\{\ln{\frac{2^{j+1}+1}{2^j-1}}+\frac{[(2^{j+1}+1)^{\rho}-(2^{j}-1)^{\rho}]^2}{2\rho(2^{j+1}+1)^{2\rho}}\r\}^{1/2}
\rightarrow\lf[\ln2+\frac{1}{2\rho}\lf(1-\frac1{2^\rho}\r)^2\r]^{1/2}\,\,{\rm{as}}\,\,j\rightarrow\fz,$$
which, together with
$$\sup_{\rho\in(0,\,\infty)}\lf[\ln2+\frac{1}{2\rho}\lf(1-\frac1{2^\rho}\r)^2\r]^{1/2}<1,$$
implies that there exists some $J\in\zz_+$ independent of $b$ such that, for any $j\in[J+1,\,\fz)\cap\zz_+$,
\begin{align}\label{eee.5}
  \lf\{\ln{\frac{2^{j+1}+1}{2^j-1}}+\frac{[(2^{j+1}+1)^{\rho}-(2^{j}-1)^{\rho}]^2}{2\rho(2^{j+1}+1)^{2\rho}}\r\}^{1/2}<1.
\end{align}
Therefore, for any $\lz\in(0,\,\fz)$, write
\begin{align*}
\int_{\rn}\vz\lf(x,\,\frac{\mu^\rho_\Omega(b)(x)}{\lz}\r)\,dx
=\int_{2^JB_{r}}\vz\lf(x,\,\frac{\mu^\rho_\Omega(b)(x)}{\lz}\r)\,dx
+\int_{\lf(2^JB_{r}\r)^\complement}\cdot\cdot\cdot
=:\mathrm{I_1+I_2}.
\end{align*}

Another step in the proof is to estimate $\mathrm{I_1}$ and $\mathrm{I_2}$, respectively.

For $\mathrm{I_1}$, by the uniformly upper type 1 property of $\vz$,
Theorem A with $\Omega\in L^q(S^{n-1})$ and $\vz^{q'}\in\mathbb{A}_d$,
and Lemma \ref{bcd} with $\vz\in\mathbb{A}_d$
(which is guaranteed by Lemma \ref{quan}(i) with \eqref{eee.4}), we know that,
for any $\lz\in(0,\,\fz)$,
\begin{align*}
{\mathrm{I_1}}
&\ls \int_{2^JB_{r}}\lf(1+\frac{\mu^\rho_\Omega(b)(x)}{\|b\|_{L^\fz}}\r)^d
\vz\lf(x,\,\frac{\|b\|_{L^\fz}}{\lz}\r)\,dx \\
&\ls \int_{2^JB_{r}}\lf(1+\frac{\lf[\mu^\rho_\Omega(b)(x)\r]^d}{\|b\|^d_{L^\fz}}\r)
\vz\lf(x,\,\frac{\|b\|_{L^\fz}}{\lz}\r)\,dx \\
&\ls \vz\lf(2^JB_{r},\,{\|b\|_{L^\fz}}\r)+\frac{1}{\|b\|^d_{L^\fz}}\int_\rn \lf[\mu^\rho_\Omega(b)(x)\r]^d \vz\lf(x,\,\frac{\|b\|_{L^\fz}}{\lz}\r)\,dx \\
&\ls \vz\lf(2^JB_{r},\,{\|b\|_{L^\fz}}\r)+\frac{1}{\|b\|^d_{L^\fz}}\int_{B_r} |b(x)|^d \vz\lf(x,\,\frac{\|b\|_{L^\fz}}{\lz}\r)\,dx \\
&\ls \vz\lf(B_{r},\,\frac{\|b\|_{L^\fz}}{\lz}\r),
\end{align*}
which is wished.

For ${\mathrm{I_2}}$, from the uniformly lower type $p$ properties of $\vz$ with $\frac{\mu^\rho_\Omega(b)(x)}{\|b\|_{L^\fz}}\ls1$ (see \eqref{eee.x} and \eqref{eee.5})
and H\"{o}lder's inequality, we deduce that, for any $\lz\in(0,\,\fz)$,
\begin{align*}
{\mathrm{I_2}}
&= \sum_{j=J+1}^{\fz}\int_{E_j}\vz\lf(x,\,\frac{\mu^\rho_\Omega(b)(x)}{\lz}\r)\,dx \\
%&\ls \sum_{j=J+1}^{\fz}\int_{E_j}\frac{\lf[\mu^\rho_\Omega(b)(x)\r]^p}{\|b\|^p_{L^\fz}}\vz\lf(x,\,{\|b\|_{L^\fz}}\r)\,dx \\
&\ls \frac{1}{\|b\|^p_{L^\fz}}\sum_{j=J+1}^{\fz}\int_{E_j}\lf[\mu^\rho_\Omega(b)(x)\r]^p
\vz\lf(x,\,\frac{\|b\|_{L^\fz}}{\lz}\r)\,dx \\
&\ls \frac{1}{\|b\|^p_{L^\fz}}\sum_{j=J+1}^{\fz}
\lf(\int_{E_j}\lf[\vz\lf(x,\,\frac{\|b\|_{L^\fz}}{\lz}\r)\r]^{1/(1-p)}\,dx\r)^{1-p}
\lf(\int_{E_j}\mu^\rho_\Omega(b)(x)\,dx\r)^p.
\end{align*}
Notice that $\vz^{1/{(1-p)}}\in\aa_d\subset\aa_\fz\ $(see \eqref{eee.4}).
By Lemma \ref{fh}, we have $\vz\in \mathbb{RH}_{1/(1-p)}$. Thus,
from Lemma \ref{bcd} with $\vz^{1/{(1-p)}}\in\mathbb{A}_d$, and $\vz\in \mathbb{RH}_{1/(1-p)}$,
it follows that, for any $\lz\in(0,\,\fz)$,
\begin{align*}
\lf(\int_{E_j}\lf[\vz\lf(x,\,\frac{\|b\|_{L^\fz}}{\lz}\r)\r]^{1/(1-p)}\,dx\r)^{1-p}
&\leq \lf[\vz^{1/(1-p)} \lf(2^{j+1}B_{r},\,\frac{\|b\|_{L^\fz}}{\lz}\r)\r]^{1-p} \\
&\ls 2^{jnd(1-p)}\lf[\vz^{1/(1-p)}\lf(B_r,\,\frac{\|b\|_{L^\fz}}{\lz}\r)\r]^{1-p} \\
&\ls 2^{jnd(1-p)}r^{-np}\vz\lf(B_r,\,\frac{\|b\|_{L^\fz}}{\lz}\r).
\end{align*}
Since $d<{p\beta}/{[n(1-p)]}$, we may choose an $\tilde{\alpha}\in(0,\,\alpha)$
such that $d<{p\tilde{\beta}}/{[n(1-p)]}$, where $\tilde{\beta}:=\min\{\tilde{\alpha},\,1/2\}$. By the assumption $\Omega\in{\rm{Din}}^1_\alpha(S^{n-1})$,
$\Omega$ satisfies the $L^1$-Dini type condition of order $\tilde{\alpha}$.
Applying Lemma \ref{lin}(i), we obtain
$$\int_{E_j}\mu^\rho_\Omega(b)(x)\,dx
\ls \|b\|_{L^\fz}\lf(2^jr\r)^n \lf(\frac{r}{2^jr}\r)^{n+\tilde{\beta}}
\thicksim \|b\|_{L^\fz}r^n 2^{-j\tilde{\beta}}.
$$
Substituting the above two inequalities into ${\mathrm{I_2}}$, we know that, for any $\lz\in(0,\,\fz)$,
$${\mathrm{I_2}}
\ls \vz\lf(B_r,\,\frac{\|b\|_{L^\fz}}{\lz}\r)\lf(\sum_{j=J+1}^{\fz}2^{j(nd-ndp-p\tilde{\beta})}\r)
\ls \vz\lf(B_r,\,\frac{\|b\|_{L^\fz}}{\lz}\r),$$
where the last inequality is due to $d<{p\tilde{\beta}}/{[n(1-p)]}$.

Finally, combining the estimates of ${\rm{I_1}}$ and ${\rm{I_2}}$, we obtain the desired inequality.
This finishes the proof of Lemma \ref{lemma.2}.
\end{proof}

\begin{proof}[Proof of Theorem \ref{dingli.1}]
From Theorem A with $\omega\equiv1$,
%and Lemma \ref{lemma.2},
it follows that $\mu^\rho_\Omega$ is bounded on $L^2$.
%for any multiple of a $(\vz,\,\fz,\,s)$-atom $b$ associated with some ball $B\subset\rn$,
%$$\int_{\rn}\vz\lf(x,\,{|\mu^\rho_\Omega(b)(x)|}\r)\,dx \ls\vz\lf(B,\,{\|b\|_{L^\fz}}\r).$$
By this, Lemma \ref{lemma.2} and the fact that $\mu^\rho_\Omega$ is a positive sublinear operator,
applying Theorem \ref{yt} with $q=\fz$,
we know that $\mu^\rho_\Omega$ extends uniquely to a bounded operator from $H^\vz$ to $L^\vz$.
%$\lf\|\mu^\rho_\Omega(f)\r\|_{L^\vz}\ls\|f\|_{H^\vz}$.
This finishes the proof of Theorem \ref{dingli.1}.
\end{proof}

\begin{proof}[Proof of Theorem \ref{dingli.2}]
The proof of Theorem \ref{dingli.2} is similar to that of Theorem \ref{dingli.1}. We only need
to modify the estimate of ${\mathrm{I_2}}$ in the proof of Lemma \ref{lemma.2}.
And fortunately, the estimate of ${\mathrm{I_2}}$ is nearly identity to that of
$J$ in the proof of \cite[Theorem 1.5]{ll07}, where \cite[Lemma 4.4(a)]{ll07} is used in that
proof, and
here Lemma \ref{lin}(ii) is used instead.
We leave the details to the interested readers.
\end{proof}

\begin{proof}[Proof of Corollary \ref{tuilun.1}]
By Lemma \ref{quan}(ii) with $\vz\in\aa_{p(1+\frac{\beta}{n})}$, we see that there exists some $d\in(1,\,\fz)$ such that $\vz^d\in\aa_{p(1+\frac{\beta}{n})}$.
For any $q\in(1,\,\fz)$, by $p>n/{(n+\beta)}$, we have $(p+p\beta/n-1/q)q'>p(1+\beta/n)$
and hence $\vz^d\in\aa_{(p+\frac{p\beta}{n}-\frac1q)q'}$.
Thus, we may choose $q:=d/{(d-1)}$ such that
$$\vz^{q'}=\vz^d\in\aa_{(p+\frac{\beta}{n}-\frac{1}{q})q'}$$
and hence Corollary \ref{tuilun.1} follows from Theorem \ref{dingli.2}.
%Since $q'\searrow1$ as $q\nearrow\fz$ and $(p+p\beta/n-1/q)q'>p(1+\beta/n)$
%(which is guaranteed by $p>n/{(n+\beta)}$), we may choose $q=\eta'$ such that
%$$\vz^{q'}\in\aa_{p(1+\frac{\beta}{n})}\subset\aa_{(p+\frac{\beta}{n}-\frac{1}{q})q'}.$$
%Thus, Corollary \ref{tuilun.1} follows from Theorem \ref{dingli.2}.
\end{proof}

\begin{proof}[Proof of Theorem \ref{dingli.3}]
Observe that, if $\vz$ is of uniformly lower type 1 and of uniformly upper type 1,
then, in either $s\in(0,\,1]$ or $s\in[1,\,\infty)$, there exists a positive constant $C$ independent of $s$ such that, for any $x\in\rn$ and $t\in[0,\,\fz)$,
\begin{align}\label{xydy}
\vz(x,\,st)\le C s\vz(x,\,t).
\end{align}
On the other hand, we claim that, in either $s\in(0,\,1]$ or $s\in[1,\,\infty)$, there exists a positive constant $C$ independent of $s$ such that, for any $x\in\rn$ and $t\in[0,\,\fz)$,
\begin{align}\label{dydy}
\vz(x,\,st)\geq Cs\vz(x,\,t).
\end{align}
In fact, by \eqref{xydy}, we have
$$s\vz(x,\,t)=s\vz(x,\,st/s)\ls\vz(x,\,s\,t),$$
which is wished.
Combining \eqref{xydy} and \eqref{dydy}, we obtain $\vz(x,\,st)\sim s\vz(x,\,t)$,
which implies that
$$\int_\rn \vz\lf(x,\, f^\ast(x)\r)\, dx\thicksim\int_\rn f^\ast(x)\vz\lf(x,\,1\r)\, dx$$
and hence, $H^\vz=H^1_{\vz(\cdot\,,\,1)}$. Similarly, $L^\vz=L^1_{\vz(\cdot\,,\,1)}$.
Then, by repeating the proof of \cite[Theorem 1.8]{ll07}, we know that
$\lf\|\mu_{\Omega}^{\rho}(f)\r\|_{L^\vz} \ls\|f\|_{H^\vz}.$
This finishes the proof of Theorem \ref{dingli.4}.
\end{proof}

%\section{Proofs of Theorem \ref{dingli.3} and Theorem \ref{dingli.4} \label{s4}}
%To show Theorem \ref{dingli.3} and Theorem \ref{dingli.4}, let us begin with the following well known lemma.
\begin{lemma}\label{m11}
Let $\rho\in(0,\,\infty)$, $\alpha\in(0,\,1]$, $\beta:=\min\{\alpha,\,1/2\}$ and
$\vz$ be a growth function as in Definition \ref{d2.3} with $p:={n}/{(n+\beta)}$ therein.
Suppose that $\Omega\in{\rm{Lip}}_\alpha(S^{n-1})$.
If $\vz\in\aa_1$, then there exists a positive constant $C$ such that,
for any $\lz\in(0,\,\fz)$ and multiple of a $(\vz,\,\fz,\,s)$-atom $b$ associated with some ball $B\subset\rn$,

$$\sup_{\alpha\in(0,\,\fz)}\vz\lf(\lf\{\mu^\rho_\Omega(b)>\alpha\r\},\,\frac{\alpha}{\lz}\r)
\le C\vz\lf(B,\,\frac{\|b\|_{L^\fz}}{\lz}\r).$$
\end{lemma}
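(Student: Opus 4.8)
The plan is to follow the scheme of the proof of Lemma \ref{lemma.2}, reducing to a single atom-multiple and then controlling the superlevel sets of $\mu^\rho_\Omega(b)$; the new ingredient, forced by the criticality $p=n/(n+\beta)$ together with the weak-type nature of the conclusion, is that the mere size bound $\mu^\rho_\Omega(b)\ls\|b\|_{L^\fz}$ of Lemma \ref{lemma.1} must be upgraded to a genuine decay estimate away from the supporting ball. First I would reduce, exactly as at the start of the proof of Lemma \ref{lemma.2} (for the passage to a genuine atom-multiple, cf. \cite[Theorem 1.4]{ll07}), to the case that $b$ is a multiple of a $(\vz,\,\fz,\,s)$-atom supported in a ball $B=B_r$; since the kernel of each $F^\rho_{\Omega,\,t}$ depends only on $x-y$, $\mu^\rho_\Omega$ is translation invariant, so we may take $B$ centered at the origin, and since $b$ is a multiple of a $(\vz,\,\fz,\,s)$-atom one has $\int_\rn b=0$. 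I would also record: by Lemma \ref{quan} there is $q\in(1,\,\fz)$ with $\vz^{q'}\in\aa_1\subset\aa_2$, so (as $\Omega\in{\rm{Lip}}_\alpha(S^{n-1})\subset L^\fz(S^{n-1})\subset L^q(S^{n-1})$) Theorem A gives that $\mu^\rho_\Omega$ is bounded on $L^2_{\vz(\cdot\,,\,t)}$ uniformly in $t\in(0,\,\fz)$; and Lemma \ref{bcd} (with $\vz\in\aa_1$) gives $\vz(\tau B_r,\,t)\ls\tau^{n}\vz(B_r,\,t)$ for all $\tau\ge 1$ and $t\in(0,\,\fz)$.

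The crux is the pointwise estimate
$$\mu^\rho_\Omega(b)(x)\ls\|b\|_{L^\fz}\lf(\frac{r}{|x|}\r)^{n+\beta}\qquad(|x|\ge 2r).$$
To prove it I would fix $x$ with $|x|\ge 2r$ and split $[\mu^\rho_\Omega(b)(x)]^2$ into the ranges $t\in(0,\,|x|-r)$, $t\in[|x|-r,\,|x|+r]$ and $t\in(|x|+r,\,\fz)$. For $t<|x|-r$ the set $\{y:\,|x-y|\le t\}$ misses $\supp b$, so this part vanishes. For $t\in[|x|-r,\,|x|+r]$ one uses only $|x-y|\sim|x|$ on $\supp b$, $\Omega\in L^\fz(S^{n-1})$ and $\|b\|_{L^\fz}$ to get $|F^\rho_{\Omega,\,t}(b)(x)|\ls\|b\|_{L^\fz}r^{n}|x|^{\rho-n}$; as this $t$-interval has length $2r$ and $t\sim|x|$ on it, it contributes $\ls\|b\|_{L^\fz}^2(r/|x|)^{2n+1}$. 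For $t>|x|+r$ one has $\supp b\subset\{y:\,|x-y|\le t\}$, so $\int_\rn b=0$ may be used: writing $F^\rho_{\Omega,\,t}(b)(x)=\int_{B_r}\lf[\frac{\Omega(x-y)}{|x-y|^{n-\rho}}-\frac{\Omega(x)}{|x|^{n-\rho}}\r]b(y)\,dy$, bounding the bracket by $\ls|y|^{\min\{\alpha,1\}}|x|^{\rho-n-\min\{\alpha,1\}}$ (mean value theorem for the radial factor, ${\rm{Lip}}_\alpha$ for the angular factor, $|x-y|\sim|x|$), and noting $F^\rho_{\Omega,\,t}(b)(x)$ is then independent of $t$, this part contributes $\ls\|b\|_{L^\fz}^2(r/|x|)^{2(n+\alpha)}$. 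Summing and using $r/|x|\le 1/2$ gives $[\mu^\rho_\Omega(b)(x)]^2\ls\|b\|_{L^\fz}^2(r/|x|)^{2n+2\min\{\alpha,1/2\}}$, which is the claim; in particular $\mu^\rho_\Omega(b)(x)\le C_0\|b\|_{L^\fz}(r/|x|)^{n+\beta}$ on $B_{2r}^\complement$ for some explicit $C_0$.

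Granting this, I would estimate, for an arbitrary level $\alpha\in(0,\,\fz)$, the quantity $\vz(\{\mu^\rho_\Omega(b)>\alpha\},\,\alpha/\lz)$ by splitting $\rn=B_{2r}\cup B_{2r}^\complement$ and using subadditivity of $\vz(\cdot\,,t)$. On $B_{2r}$: if $\alpha\le\|b\|_{L^\fz}$, I bound the superlevel set by $B_{2r}$ and use the uniform lower type $p$ together with Lemma \ref{bcd} to get $\vz(B_{2r},\,\alpha/\lz)\ls(\alpha/\|b\|_{L^\fz})^{p}\vz(B_r,\,\|b\|_{L^\fz}/\lz)\le\vz(B_r,\,\|b\|_{L^\fz}/\lz)$; if $\alpha>\|b\|_{L^\fz}$, I use Chebyshev's inequality and the uniform $L^2_{\vz(\cdot,\,\alpha/\lz)}$-boundedness of $\mu^\rho_\Omega$, then the uniform upper type $1$, to get $\ls\alpha^{-2}\|b\|_{L^\fz}^2\vz(B_r,\,\alpha/\lz)\ls(\|b\|_{L^\fz}/\alpha)\vz(B_r,\,\|b\|_{L^\fz}/\lz)\le\vz(B_r,\,\|b\|_{L^\fz}/\lz)$. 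On $B_{2r}^\complement$: by the pointwise estimate, $\{x\in B_{2r}^\complement:\,\mu^\rho_\Omega(b)(x)>\alpha\}$ is empty once $\alpha\ge C_0\|b\|_{L^\fz}2^{-(n+\beta)}$, and for smaller $\alpha$ it is contained in $B_{R_\alpha}$ with $R_\alpha:=r(C_0\|b\|_{L^\fz}/\alpha)^{1/(n+\beta)}$, so, by Lemma \ref{bcd} and then the uniform lower type $p$ (using $p(n+\beta)=n$),
$$\vz\lf(\{x\in B_{2r}^\complement:\,\mu^\rho_\Omega(b)(x)>\alpha\},\,\frac{\alpha}{\lz}\r)\ls\lf(\frac{R_\alpha}{r}\r)^{n}\vz\lf(B_r,\,\frac{\alpha}{\lz}\r)=\lf(\frac{C_0\|b\|_{L^\fz}}{\alpha}\r)^{p}\vz\lf(B_r,\,\frac{\alpha}{\lz}\r)\ls\vz\lf(B_r,\,\frac{\|b\|_{L^\fz}}{\lz}\r),$$
the narrow regime in which $\alpha$ is comparable to $\|b\|_{L^\fz}$ being handled instead via the uniform upper type $1$. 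Adding the $B_{2r}$- and $B_{2r}^\complement$-contributions yields a bound independent of $\alpha$; taking the supremum over $\alpha$ finishes the proof.

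The main obstacle is the pointwise estimate, and inside it the middle range $|x|-r<t<|x|+r$: there the truncating ball covers $\supp b$ only partially, the cancellation $\int_\rn b=0$ is unavailable, and one is forced down to the bare size estimate, which manufactures the exponent $n+1/2$ and hence the $1/2$ in $\beta=\min\{\alpha,\,1/2\}$. This is precisely the feature separating $\mu^\rho_\Omega$ from a Calder\'on--Zygmund operator, and the reason the kernel estimates of \cite{ll07} --- or the reverse-H\"older bookkeeping used in the far part of Lemma \ref{lemma.2}, which would demand $\vz\in\mathbb{RH}_{(n+\beta)/\beta}$, not implied by $\vz\in\aa_1$ --- do not settle the present endpoint case. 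A secondary, purely bookkeeping, difficulty is that the Musielak--Orlicz variable $\alpha/\lz$ is coupled to the level $\alpha$, so every passage between the heights $\alpha/\lz$ and $\|b\|_{L^\fz}/\lz$ must go through the type conditions according to whether $\alpha$ is below or above $\|b\|_{L^\fz}$; it is exactly the criticality $p=n/(n+\beta)$ that makes the dilation loss $(R_\alpha/r)^{n}$ from Lemma \ref{bcd} cancel the lower-type-$p$ gain $(\alpha/\|b\|_{L^\fz})^{p}$.
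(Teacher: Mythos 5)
Your proposal is correct and takes essentially the same route as the paper's proof: the same reduction to an atom-multiple on $B_r$, the same split into $B_{2r}$ and $(B_{2r})^\complement$, with the near part handled through Theorem A (using Lemma \ref{quan}(ii) to get $\vz^{q'}\in\aa_2$) together with Lemma \ref{bcd}, and the far part through the decay bound $\mu^\rho_\Omega(b)(x)\ls\|b\|_{L^\fz}(r/|x|)^{n+\beta}$ combined with Lemma \ref{bcd} and the exact cancellation coming from the lower type $p=n/(n+\beta)$. The only differences are minor: you derive the pointwise estimate directly (the paper imports it from \cite[Theorem 1.1]{w16}, see \eqref{brdtgj}), and you organize the near-ball bound as a two-case Chebyshev argument in $\alpha$, whereas the paper simply dominates $\sup_{\alpha\in(0,\,\fz)}\vz(\{x\in B_{2r}:\ \mu^\rho_\Omega(b)(x)>\alpha\},\,\alpha/\lz)$ by $\int_{B_{2r}}\vz(x,\,\mu^\rho_\Omega(b)(x)/\lz)\,dx$ and then uses the uniform upper type $1$ property.
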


\begin{proof}
%Since it is intended solely as a brief review and not as a rigorous development,
%the pertinent results are stated without proof.
We show this lemma by borrowing some ideas from the proof of \cite[Theorem 5.2]{lyj16}.
Without loss of generality, we may assume $b$ is a multiple of a
$(\vz,\,\fz,\,s)$-atom associated with a ball $B_r$ for some $r\in(0,\,\fz)$.
For the general case,
we refer the readers to the method of proof in \cite[Theorem 1.4]{ll07}.
Proceeding as in the proof of \cite[Theorem 1.1]{w16}, we know that, for any $x\in\lf(B_{2r}\r)^{\complement}$,
$$\mu^\rho_\Omega(b)(x)\ls \|b\|_{L^\fz}\lf(\frac{r^{n+1/2}}
{|x|^{n+1/2}}+\frac{r^{n+1}}{|x|^{n+1}}+\frac{r^{n+\alpha}}{|x|^{n+\alpha}}\r),$$
which, together with $\beta:=\min\{\alpha,\,1/2\}$, implies that, for any $x\in\lf(B_{2r}\r)^{\complement}$,
\begin{eqnarray}\label{brdtgj}
\mu^\rho_\Omega(b)(x)\ls \|b\|_{L^\fz}\frac{r^{n+\beta}}{|x|^{n+\beta}}\,.
\end{eqnarray}
Therefore, for any $\lz\in(0,\,\fz)$, write
\begin{align*}
&\sup_{\alpha\in(0,\,\fz)}\vz\lf(\lf\{\mu^\rho_\Omega(b)>\alpha\r\},\,\frac{\alpha}{\lz}\r) \\
&\hs\le \sup_{\alpha\in(0,\,\fz)}\vz\lf(\lf\{x\in B_{2r}: \mu^\rho_\Omega(b)(x)>\alpha\r\},\,\frac{\alpha}{\lz}\r) \\
&\hs\hs+\sup_{\alpha\in(0,\,\fz)}\vz\lf(\lf\{x\in (B_{2r})^\complement: \mu^\rho_\Omega(b)(x)>\alpha\r\},\,\frac{\alpha}{\lz}\r)
=:{\rm{I_1}}+{\rm{I_2}}.
%&= \sup_{\alpha\in(0,\,\fz)}\int_{\lf\{x\in B_r: \ T^\delta_\ast(b)(x)>\alpha\r\}}\vz\lf(x,\,\alpha\r)\,dx \\
%&\hs+\sup_{\alpha\in(0,\,\fz)}\vz\lf(\lf\{x\in (B_r)^\complement: \ T^\delta_\ast(b)(x)>\alpha\r\},\,\alpha\r) \\
\end{align*}

For $\mathrm{I_1}$, from Lemma \ref{quan}(ii) with $\vz\in\aa_2$ (since $\vz\in\aa_1$),
it follows that $\vz^{q'}\in\mathbb{A}_2$ for some ${q'}\in(1,\,\infty)$.
By the uniformly upper type 1 property of $\vz$,
Theorem A with $\Omega\in L^q(S^{n-1})$ (since $\Omega\in{\rm{Lip}}_\alpha(S^{n-1})$) and $\vz^{q'}\in\mathbb{A}_2$,
and Lemma \ref{bcd} with $\vz\in\mathbb{A}_2$, we know that, for any $\lz\in(0,\,\fz)$,
\begin{align*}
{\mathrm{I_1}}
&=\sup_{\alpha\in(0,\,\fz)}\int_{\lf\{x\in B_{2r}: \mu^\rho_\Omega(b)(x)>\alpha\r\}}\vz\lf(x,\,\frac{\alpha}{\lz}\r)\,dx\\
&\le \int_{B_{2r}}\vz\lf(x,\,\frac{\mu^\rho_\Omega(b)(x)}{\lz}\r)\,dx \\
&\ls \int_{B_{2r}}\lf(1+\frac{\mu^\rho_\Omega(b)(x)}{\|b\|_{L^\fz}}\r)^2\vz\lf(x,\,\frac{\|b\|_{L^\fz}}{\lz}\r)\,dx \\
&\ls \int_{B_{2r}}\lf(1+\frac{\lf[\mu^\rho_\Omega(b)(x)\r]^2}{\|b\|^2_{L^\fz}}\r)\vz\lf(x,\,\frac{\|b\|_{L^\fz}}{\lz}\r)\,dx \\
&\ls \vz\lf(B_{2r},\,\frac{\|b\|_{L^\fz}}{\lz}\r)+\frac{1}{\|b\|^2_{L^\fz}}\int_\rn \lf[\mu^\rho_\Omega(b)(x)\r]^2 \vz\lf(x,\,\frac{\|b\|_{L^\fz}}{\lz}\r)\,dx \\
&\ls \vz\lf(B_{2r},\,\frac{\|b\|_{L^\fz}}{\lz}\r)+\frac{1}{\|b\|^2_{L^\fz}}\int_{B_r} |b(x)|^2 \vz\lf(x,\,\frac{\|b\|_{L^\fz}}{\lz}\r)\,dx \\
&\ls \vz\lf(B_{r},\,\frac{\|b\|_{L^\fz}}{\lz}\r),
\end{align*}
which is wished.

For ${\rm{I_2}}$, from \eqref{brdtgj}, Lemma \ref{bcd} with $\vz\in\aa_1$,
and the uniformly lower type ${\frac{n}{n+\beta}}$ property of $\vz$, we deduce that, for any $\lz\in(0,\,\fz)$,
\begin{align*}
{\rm{I_2}}
&\ls\sup_{\alpha\in(0,\,\fz)}\vz\lf(\lf\{x\in (B_{2r})^\complement: \
\|b\|_{L^\fz}\frac{r^{n+\beta}}{|x|^{n+\beta}}>\alpha\r\},\,\frac{\alpha}{\lz}\r) \\
&\sim\sup_{\alpha\in(0,\,\fz)}\vz\lf(\lf\{x\in (B_{2r})^\complement: \
|x|^{n+\beta}<\frac{\|b\|_{L^\fz}}{\alpha}r^{n+\beta}\r\},\,\frac{\alpha}{\lz}\r) \\
&\sim\sup_{\alpha\in(0,\,\fz)}\vz\lf(\lf\{x\in \rn: \
2r\le|x|<\lf(\frac{\|b\|_{L^\fz}}{\alpha}\r)^{\frac{1}{n+\beta}} r \r\},\,\frac{\alpha}{\lz}\r) \\
&\ls\sup_{\alpha\in(0,\,\|b\|_{L^\fz})}\vz\lf(\lf\{x\in \rn: \
|x|<\lf(\frac{\|b\|_{L^\fz}}{\alpha}\r)^{\frac{1}{n+\beta}} r \r\},\,\frac{\alpha}{\lz}\r) \\
&\thicksim\sup_{\alpha\in(0,\,\|b\|_{L^\fz})}
\vz\lf(\lf[\frac{\|b\|_{L^\fz}}{\alpha}\r]^{\frac{1}{n+\beta}} B_r ,\,\frac{\alpha}{\lz}\r) \\
&\ls\sup_{\alpha\in(0,\,\|b\|_{L^\fz})} \lf(\frac{\|b\|_{L^\fz}}{\alpha}\r)^{\frac{n}{n+\beta}}
\vz\lf(B_r ,\,\frac{\alpha}{\lz}\r) \\
&\ls\sup_{\alpha\in(0,\,\|b\|_{L^\fz})} \lf(\frac{\|b\|_{L^\fz}}{\alpha}\r)^{\frac{n}{n+\beta}}
\lf(\frac{\alpha}{\|b\|_{L^\fz}}\r)^{\frac{n}{n+\beta}} \vz\lf(B_r,\,\frac{\|b\|_{L^\fz}}{\lz}\r) \\
&\thicksim \vz\lf(B_r,\,\frac{\|b\|_{L^\fz}}{\lz}\r).
\end{align*}

Combining the estimates of ${\rm{I_1}}$ and ${\rm{I_2}}$, we obtain the desired inequality.
This finishes the proof of Lemma \ref{m11}.
\end{proof}

\begin{proof}[Proof of Theorem \ref{dingli.4}]
From Theorem A with $\omega\equiv1$,
it follows that $\mu^\rho_\Omega$ is bounded on $L^2$.
By this, Lemma \ref{lemma.2} and the fact that $\mu^\rho_\Omega$ is a positive sublinear operator,
applying Theorem \ref{yt2} with $q=\fz$,
we know that $\mu^\rho_\Omega$ extends uniquely to a bounded operator from ${H^\vz}$ to $WL^\vz$.
This finishes the proof of Theorem \ref{dingli.4}.
\end{proof}

\medskip

\noindent{\bf Competing Interests}

\medskip
\noindent The authors declare that there is no conflict of interests regarding the publication of this paper.

\medskip

\noindent {\bf Authors' Contributions}

\medskip

\noindent Li Bo conceived of the study. Liu Xiong, Li Baode, Qiu Xiaoli and Li Bo carried out the main results, participated in the sequence alignment and drafted the manuscript. Moreover, all authors read and approved the final manuscript.

\medskip

\noindent {\bf Acknowledgements}

\medskip

\noindent
This work is partially supported by the National Natural Science Foundation of China (Grant Nos. 11461065 \& 11661075)
and A Cultivate Project for Young Doctor from Xinjiang Uyghur Autonomous Region (No. qn2015bs003).
The authors would like to thank the anonymous referees for their constructive comments.

\bigskip

\noindent  Liu Xiong, Li Baode, Qiu Xiaoli and Li Bo (Corresponding author)

\medskip

\noindent
College of Mathematics and System Sciences \\
Xinjiang University\\
Urumqi 830046\\
P. R. China

\smallskip

\noindent{E-mail }:\\
\texttt{1394758246@qq.com} (Liu Xiong)  \\
\texttt{1246530557@qq.com} (Li Baode)  \\
\texttt{2237424863@qq.com} (Qiu Xiaoli) \\
\texttt{bli.math@outlook.com} (Li Bo)

%\bigskip \medskip

%\noindent Li Bo (Corresponding author)
%
%\medskip
%
%\noindent
%School of Mathematics \\
%Tianjin University \\
%Tianjin 300354\\
%P. R. China
%
%\smallskip
%
%\noindent{E-mail }: \\
%\texttt{bli.math@outlook.com} (Li Bo)   \\

\bigskip

\end{document}